\newtheorem*{theorem*}{Theorem}
\newtheorem{theorem}{Theorem}[section]
\newtheorem{lemma}[theorem]{Lemma}
\newtheorem{proposition}[theorem]{Proposition}
\newtheorem{corollary}[theorem]{Corollary} 
\theoremstyle{definition}
\newtheorem{definition}[theorem]{Definition}
\newtheorem{example}[theorem]{Example}
\newtheorem{examples}[theorem]{Examples}
\newtheorem{remark}[theorem]{Remark}
\newcommand{\R}{\mathbb{R}}
\newcommand{\Rbar}{\overline{\mathbb{R}}}
\newcommand{\Z}{\mathbb{Z}}
\newcommand{\N}{\mathbb{N}}
\newcommand{\PP}{\mathbb{P}}
\newcommand{\NN}{\mathbb{N}}
\newcommand{\A}{\mathbb{A}}
\newcommand{\G}{\mathbb{G}}
\newcommand{\Sigmabar}{\overline{\Sigma}}
\newcommand{\sigmabar}{\overline{\sigma}}
\newcommand{\Mbar}{\overline{M}}
\newcommand{\calObar}{\overline{\mathcal{O}}}
\newcommand{\calA}{\mathcal{A}}
\newcommand{\calO}{\mathcal{O}}
\newcommand{\calMbar}{\overline{\mathcal{M}}}
\newcommand{\frakS}{\mathfrak{S}}
\DeclareMathOperator{\Spec}{Spec}
\DeclareMathOperator{\Hom}{Hom}
\DeclareMathOperator{\Aut}{Aut}
\DeclareMathOperator{\trop}{trop}
\DeclareMathOperator{\val}{val}
\DeclareMathOperator{\Trop}{Trop}
\DeclareMathOperator{\pr}{pr}
\DeclareMathOperator{\id}{id}
\DeclareMathOperator{\Span}{Span}
\title[Functorial tropicalization of logarithmic schemes]{Functorial tropicalization of logarithmic schemes: the case of constant coefficients}
\author{Martin Ulirsch}
\address{University of Michigan, Ann Arbor, MI 48109, USA}
\email{\href{mailto:ulirsch@umich.edu}{ulirsch@umich.edu}}
\subjclass[2010]{14T05; 14G22; 20M14}
\date{\today}\thanks{M.U.'s research was supported in part by funds from BSF grant 201025, NSF grants DMS0901278 and DMS1162367, and by the SFB/TR 45 'Periods, Moduli Spaces and Arithmetic of Algebraic Varieties' of the DFG (German Research Foundation) as well as by the Hausdorff Center for Mathematics at the University of Bonn.} 
\begin{document}

\maketitle

\begin{abstract} The purpose of this article is to develop foundational techniques from logarithmic geometry in order to define a functorial tropicalization map for fine and saturated logarithmic schemes in the case of constant coefficients. Our approach crucially uses the theory of fans in the sense of K. Kato and generalizes Thuillier's retraction map onto the non-Archimedean skeleton in the toroidal case. For the convenience of the reader many examples as well as an introductory treatment of the theory of Kato fans are included.
\end{abstract}

\setcounter{tocdepth}{1}
\tableofcontents


\section*{Introduction}
Tropical geometry associates to an algebraic variety $Y$ a "polyhedral shadow" known as its \emph{tropicalization}, whose polyhedral geometry surprisingly often corresponds to the algebraic geometry of $Y$. Classically (see e.g. \cite{MaclaganSturmfels_book}, \cite{Kajiwara_troptoric}, and \cite{Payne_anallimittrop}), in order to define the tropicalization of $Y$, one has to choose an embedding of $Y$ into a suitable toric variety $X$, and, in general, the geometry the tropicalization of $Y$ strongly depends on the chosen embedding $Y\hookrightarrow X$. 

Unfortunately, there are many varieties that do not admit a well-understood embedding into a toric variety, such as the Deligne-Knudsen-Mumford \emph{moduli spaces} $\calMbar_{g,n}$ of stable $n$-marked curves for $g>0$ (see \cite{DeligneMumford_moduliofcurves} and \cite{Knudsen_projectivityII}) or the \emph{toric degenerations} arising in the Gross-Siebert approach to mirror symmetry (see e.g. \cite{GrossSiebert_realaffinetocplx}, \cite{GrossSiebert_mirrorsymmetryvialogdegdataI}, and \cite{GrossSiebert_mirrorsymmetryvialogdegdataII}). A common feature of these varieties is that they are locally (in the \'etale topology) isomorphic to toric varieties, i.e. they canonically carry the structure of a \emph{toroidal embedding} in the sense of \cite{KKMSD_toroidal}. 

In this article we use techniques from Berkovich analytic spaces (see e.g. \cite{Berkovich_book} and \cite{Berkovich_etalecoho}) and logarithmic geometry (see \cite{Kato_logstr} and \cite{Kato_toricsing}) to construct a natural tropicalization map associated to any fine and saturated logarithmic scheme $X$ that is locally of finite type over a trivially valued base field. We, in particular, show that this tropicalization map is functorial with respect to logarithmic morphisms and that it recovers Thuillier's \cite{Thuillier_toroidal} strong deformation retraction onto the non-Archimedean skeleton of $X$ in the logarithmicallly smooth case (see Theorems \ref{thm_tropfunc} and  \ref{thm_skel=trop} below). Using these techniques we derive a criterion for a sch\"on variety (in the sense of \cite{Tevelev_tropcomp}) to admit a faithful tropicalization (see Corollary \ref{cor_schoen&faithful} below). 

The main applications of the techniques developed in this article lie in the tropical geometry of moduli spaces.  In \cite{AbramovichCaporasoPayne_tropicalmoduli}, as an archetypical result, the authors show that the moduli space  of stable tropical curves is isomorphic to the skeleton of the moduli space of algebraic curves with the respect to the toroidal structure coming from the Deligne-Knudsen-Mumford compactification (see \cite{DeligneMumford_moduliofcurves} and \cite{Knudsen_projectivityII}). Similar results have appeared for other moduli spaces, such as the moduli space of admissible covers (see \cite{CavalieriMarkwigRanganathan_admissiblecovers}), the moduli space of weighted stable curves (see \cite{CavalieriMarkwigHampeRanganathan_weightedstablecurves} for the case genus $g=0$ and \cite{Ulirsch_weightedstablecurves} for $g\geq 0$), and the moduli space of rational logarithmic stable maps into a toric variety (see \cite{Ranganathan_ratcurtorvarnonArch}). 

Let us now give a short outline of this article: In Section \ref{section_mainresults} we give an overview of the historical background, provide precise statements of our results, and discuss further developments that have appeared in parallel or after this article has first been made available. Section \ref{section_monoidscones&monoidalspaces} introduces the basic notions of monoids and cones and the theory of locally monoidal spaces. In Section \ref{section_Katofans&conecomplexes} we introduce the technical heart of our construction, the notion of a Kato fan, as originally proposed in \cite{Kato_toricsing}, and connect it with the theory of (extended) rational polyhedral cone complexes. Section \ref{section_logstr} gives a quick introduction to the theory of logarithmic structures in the sense of Kato-Fontaine-Illusie (see \cite{Kato_logstr}) and shows how Kato fans naturally arise in this theory. Section \ref{section_analytification} introduces the two different non-Archimedean analytification functors used in this text and explains their differences using the torus-invariant open subsets of the projective line as explicit examples. In Section \ref{section_tropicalization} we construct the tropicalization map and prove our main results. Section \ref{section_toricvar}, finally, is concerned with a comparison of our construction with the well-known embedded tropicalization for (subvarieties of) toric varieties in the sense of Kajiwara and Payne (see \cite{Kajiwara_troptoric} and \cite{Payne_anallimittrop}) and a proof of Corollary \ref{cor_schoen&faithful}.


\subsection*{Acknowledgements}

The author would like to express his gratitude to Dan Abramovich for his constant support and encouragement. Thanks are also due to Walter Gubler and Sam Payne, as well as to Amaury Thuillier, who originally suggested to the author the use of Kato fans in order to define tropicalization maps, and to Jonathan Wise, whose ideas related to Artin fans heavily influenced this work. The author also profited from discussions with Lorenzo Fantini, Jeffrey and Noah Giansiracusa, Angela Gibney, Andreas Gross, Alana Huszar, Oliver Lorscheid, Diane Maclagan, Steffen Marcus, Samouil Molcho, Johannes Nicaise, Joseph Rabinoff, Dhruv Ranganathan, Mattia Talpo, and Michael Temkin. Parts of this research have been carried out while enjoying the hospitality of Hebrew University, Jerusalem, and the University of Regensburg. Particular thanks are due to the anonymous referee(s) for several suggestions that significantly improved the readability of this article. 


\section{Overview and statement of the main results}\label{section_mainresults}

\subsection{Historical background: Tropicalization of tori, toric varieties, and toroidal embeddings} 

Let $k$ be a field that is endowed with a (possibly trivial) non-Archimedean absolute value $\vert.\vert$, let $N$ be a finitely generated free abelian group of dimension $n$, and write $M$ for its dual $\Hom(N,\Z)$ as well as $\langle.,.\rangle$ for the duality pairing between $N$ and $M$. 

\subsubsection{} \emph{Tropicalization} is a process that associates to a closed subset $Y$ of the split algebraic torus $T=\Spec k[M]$ a subset $\Trop(Y)$ in $N_\R=N\otimes\R$, the \emph{tropicalization} of $Y$, which can be endowed with the structure of a rational polyhedral complex. Following \cite{EinsiedlerKapranovLind_amoebas} as well as \cite{Gubler_tropvar} and \cite{Gubler_guide}, one of the many ways of defining $\Trop(Y)$ is by taking it to be the projection of the non-Archimedean analytic space $Y^{an}$ (see Section \ref{section_analytification} below) associated to $Y$ into $N_\R$ via a natural continuous \emph{tropicalization map} 
\begin{equation*}
\trop\mathrel{\mathop:}T^{an}\longrightarrow N_\R \ .
\end{equation*} 
The image $\trop(x)$ of a point $x\in T^{an}$, given by a seminorm $\vert.\vert_x\mathrel{\mathop:}k[M]\rightarrow \R$ extending  $\vert.\vert$ on $k$, is uniquely determined by the condition
\begin{equation*}
\langle\trop(x),m\rangle=-\log\big\vert\chi^m\big\vert_x
\end{equation*}
for all $m\in M$, where $\chi^m$ denotes the character in $k[M]$ corresponding to $m\in M$. 

\subsubsection{}\label{section_introtroptoric} Kajiwara \cite[Section 1]{Kajiwara_troptoric} and, independently, Payne \cite[Section 3]{Payne_anallimittrop} define a natural continuous extension of the above tropicalization map to a $T$-toric variety $X=X(\Delta)$ defined by a rational polyhedral fan $\Delta$ in $N_\R$. For standard notation and general background on toric varieties we refer the reader to \cite{Fulton_toricvarieties}. 

The codomain of the tropicalization map is a partial compactification $N_\R(\Delta)$ of $N_\R$ determined by $\Delta$, a detailed construction of which can be found in \cite[Section 3]{Rabinoff_newtonpolygon}. For a torus-invariant open affine subset $U_\sigma=\Spec k[S_\sigma]$ of $X$ defined by the semigroup $S_\sigma=\sigma^\vee\cap M$ for a cone $\sigma\in\Delta$, the compactification $N(\sigma)$ is given by $\Hom(S_\sigma,\Rbar)$, where $\Rbar=\R\cup\{\infty\}$ is endowed with the natural additive monoid structure, and the tropicalization map 
\begin{equation*}
\trop_\Delta\mathrel{\mathop:}U_\sigma^{an}\longrightarrow N_\R(\sigma) 
\end{equation*} 
sends $x\in U_\sigma^{an}$ to the element $\trop_\Delta(x)\in\Hom(S_\sigma,\Rbar)$ that is determined by 
\begin{equation*}
\trop_\Delta(x)(s)=-\log\vert\chi^s\vert_x 
\end{equation*} 
for all $s\in S_\sigma$. From an alternative point of view, one may also think of $\trop_\Delta$ as a \emph{non-Archimedean analytic moment map} (see \cite[Remark 3.3]{Payne_anallimittrop}, \cite{Kajiwara_troptoric}, \cite[Section 4.1]{GilPhilipponSombra_arithmetictoric}, and \cite[Sections 1.1 and 1.2]{Ulirsch_tropisquot}). 

\subsubsection{} Suppose that $\vert.\vert$ is the trivial absolute value, i.e. $\vert a\vert=1$ for all $a\in k^\ast$. In this case Thuillier \cite[Section 2]{Thuillier_toroidal} constructs a closely related strong deformation retraction 
\begin{equation*}
\mathbf{p}\mathrel{\mathop:}X^\beth\longrightarrow X^\beth 
\end{equation*}
from $X^\beth$ onto the \emph{non-Archimedean skeleton} $\mathfrak{S}(X)$ of $X$ using the natural action of the analytic group $T^\beth$ on $X^\beth$. By \cite[Proposition 2.9]{Thuillier_toroidal} there is a natural embedding $i_\Delta\mathrel{\mathop :}\mathfrak{S}(X)\hookrightarrow N_\R(\Delta)$ such that the diagram
\begin{equation*}\begin{CD}
X^\beth @>\mathbf{p}>>\mathfrak{S}(X)\\
@V\subseteq VV @VVi_\Delta V\\
X^{an}@>\trop_\Delta>>N_\R(\Delta)
\end{CD}\end{equation*}
is commutative and the image of $\mathfrak{S}(X)$ in $N_\R(\Delta)$ is the closure $\overline{\Delta}$ of $\Delta$ in $N_\R(\Delta)$. Note that on a torus-invariant open affine subset $U_\sigma=\Spec k[S_\sigma]$ of $X$, for a cone $\sigma\in\Delta$, the image of $i_\Delta$ is given by
 \begin{equation*}
 \overline{\sigma}=\Hom(S_\sigma,\Rbar_{\geq 0})\subseteq\Hom(S_\sigma,\Rbar) \ ,
\end{equation*}
which is also known as the \emph{canonical compactifcation} of the cone $\sigma=\Hom(S,\R_{\geq 0})$.


\subsubsection{} Suppose now that $X_0\hookrightarrow X$ is a toroidal embedding, i.e. an open and dense embedding that is \'etale locally isomorphic to the open embedding of a big algebraic torus into a toric variety. Using formal torus actions Thuillier \cite[Section 3]{Thuillier_toroidal} is able to lift his construction for toric varieties and obtains a strong deformation retraction
\begin{equation*}
\mathbf{p}\mathrel{\mathop:}X^\beth\longrightarrow X^\beth
\end{equation*}
onto the \emph{non-Archimedean skeleton} $\mathfrak{S}(X)$ of $X$. We refer to \cite[Chapter 2]{KKMSD_toroidal} and the beginning of \cite[Section 3]{Thuillier_toroidal} for the basic theory of toroidal embeddings. In \cite{KKMSD_toroidal} the authors work with formal instead of \'etale neighborhoods, but by \cite[Section 2]{Denef_toroidal} both approaches are equivalent over algebraically closed fields.

In \cite{AbramovichCaporasoPayne_tropicalmoduli} Abramovich, Caporaso, and Payne explain how $\mathfrak{S}(X)$ can be endowed with the structure of a \emph{generalized extended cone complex} $\Sigmabar_X$. By \cite[Proposition 3.15]{Thuillier_toroidal}, if $X_0\hookrightarrow X$ has \emph{no self-intersection} in the terminology of \cite{KKMSD_toroidal}, then $\Sigmabar_X$ is the canonical compactification of the rational polyhedral cone complex $\Sigma_X$ associated to the toroidal embedding as constructed in the end of \cite[Section 2.1]{KKMSD_toroidal}. Note that other authors also use the adjectives \emph{simple} or \emph{strict} to denote toroidal embeddings without self-intersection. 

\subsection{Tropicalization of logarithmic schemes}  Let $X$ be a fine and saturated logarithmic scheme locally of finite type over $k$. In this article we associate to $X$ a \emph{generalized cone complex} $\Sigma_X$, expanding on both \cite{AbramovichCaporasoPayne_tropicalmoduli} and \cite{KKMSD_toroidal}, and construct a natural continuous tropicalization map $\trop_X\mathrel{\mathop:}X^\beth\rightarrow \Sigmabar_X$ from $X^\beth$ into its canonical extension $\Sigmabar_X$. 
For our definition to be reasonable we require $\trop_X$ to fulfill the following two properties:
\begin{enumerate}[(i)]
\item The tropicalization map is functorial with respect to logarithmic morphisms.
\item In the logarithmically smooth case $\trop_X$ recovers Thuillier's retraction map.
\end{enumerate}
To be precise, the following two theorems have to hold:

\begin{theorem}\label{thm_tropfunc}
A morphism $f\mathrel{\mathop:}X\rightarrow X'$ of fine and saturated logarithmic schemes locally of finite type over $k$ induces a morphism $\Sigma(f)\mathrel{\mathop:}\Sigma_X\rightarrow\Sigma_{X'}$ of generalized cone complexes that makes the induced diagram
\begin{equation*}\begin{CD}
X^\beth @>\trop_X>>\overline{\Sigma}_X\\
@Vf^\beth VV @VV\Sigmabar(f) V\\
(X')^\beth @>\trop_{X'}>> \overline{\Sigma}_{X'} 
\end{CD}\end{equation*}
commute. The association $f\mapsto \Sigma(f)$ is functorial in $f$.
\end{theorem}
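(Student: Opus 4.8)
The plan is to establish the two main ingredients of the theorem—the functorial analytification $f \mapsto f^\beth$ and the functorial association $f \mapsto \Sigmabar(f)$ of morphisms of generalized extended cone complexes—separately, and then to verify that $\trop_X$ intertwines them. The functoriality of analytification is essentially formal: the assignment $(.)^\beth$ should be a functor from finite type $k$-schemes (or locally finite type ones) to the category of analytic spaces, so that any morphism $f\mathrel{\mathop:}X\rightarrow X'$ of schemes yields $f^\beth\mathrel{\mathop:}X^\beth\rightarrow(X')^\beth$ automatically, with $(g\circ f)^\beth=g^\beth\circ f^\beth$ and $\id^\beth=\id$. First I would make this precise by recalling the construction of $(.)^\beth$ from Section \ref{section_analytification} and checking that it is defined on morphisms in the expected way; since analytification is a right adjoint (or represents an appropriate functor of points), functoriality comes for free, so this step requires only bookkeeping rather than new ideas.

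The substantial content lies in constructing $\Sigmabar(f)$ and proving its functoriality. The generalized extended cone complex $\Sigmabar_X$ is built from the logarithmic structure $(M_X,\rho_X)$ via the associated Kato fan, so a morphism $f$ of fine and saturated logarithmic schemes induces a morphism $f^\flat\mathrel{\mathop:}f^{-1}M_{X'}\rightarrow M_X$ of sheaves of monoids (this is part of the data of a morphism of logarithmic schemes). The key step is to translate this monoid morphism into a morphism of the associated combinatorial objects. Concretely, after passing to characteristic monoids $\overline{M}_X=M_X/\mathcal{O}_X^\ast$, the cones of $\Sigmabar_X$ are of the form $\Hom(\overline{M}_{X,\overline x},\Rbar_{\geq 0})$ at geometric points $\overline x$, and the induced map on monoids dualizes contravariantly to give maps of canonically compactified cones $\Hom(\overline{M}_{X',\overline{f(x)}},\Rbar_{\geq 0})\rightarrow\Hom(\overline{M}_{X,\overline x},\Rbar_{\geq 0})$. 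Assembling these pointwise maps into a single continuous morphism $\Sigmabar(f)\mathrel{\mathop:}\Sigmabar_X\rightarrow\Sigmabar_{X'}$ of generalized extended cone complexes is the heart of the argument.

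The main obstacle I expect is precisely this assembly step: one must check that the pointwise-dual maps are compatible with the generization/specialization relations that glue the individual cones into the cone complex, i.e. that the construction respects the face relations and the stratification coming from the Kato fan, and that the resulting map is well-defined and continuous on the colimit topology of $\Sigmabar_X$. This compatibility reduces to the functoriality of the monoid-to-cone duality together with naturality of the characteristic-sheaf construction under $f^\flat$, but one has to handle the fact that $f$ need not preserve strata in a simple way, so the combinatorial morphism may collapse or refine cones. Once this is in place, functoriality of $\Sigmabar(f)$ (that $\Sigmabar(g\circ f)=\Sigmabar(g)\circ\Sigmabar(f)$) follows from contravariant functoriality of $\Hom(-,\Rbar_{\geq 0})$ applied to the composite monoid morphisms.

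Finally, the commutativity of the square is checked locally on the source, after reducing to the affine situation where $X$ carries a chart by a monoid $P$ and the tropicalization map sends a point $x\in X^\beth$, given by a seminorm $\vert.\vert_x$, to the element of $\Hom(P,\Rbar_{\geq 0})$ determined by $p\mapsto-\log\vert\alpha(p)\vert_x$, in direct analogy with the toric formula $\trop_\Delta(x)(s)=-\log\vert\chi^s\vert_x$ recalled in Section \ref{section_introtroptoric}. Here I would unwind both $\trop_{X'}\circ f^\beth$ and $\Sigmabar(f)\circ\trop_X$ on such a point and observe that both send $x$ to the composite $P'\xrightarrow{f^\flat}P\xrightarrow{-\log\vert\cdot\vert_x}\Rbar_{\geq 0}$; the equality is then immediate from the definition of pullback of log structures, since the seminorm defining $f^\beth(x)$ on $X'$ is by construction the pullback of $\vert.\vert_x$ along $f$. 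The functoriality statement $f\mapsto(f^\beth,\Sigmabar(f))$ being compatible with composition is then inherited from the two functorialities established above.
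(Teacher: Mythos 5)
Your outline is correct and the commutativity check at the end is exactly the computation the paper performs (Proposition \ref{prop_tropprop}(i)--(ii)): on an affine chart both composites send a point $x\in X^\beth$ to $p'\mapsto -\log\vert\alpha(f^\flat(p'))\vert_x$, and the unit ambiguity in passing to characteristic monoids is harmless because $\vert u\vert_x=1$ for $u\in\mathcal{O}_X^\ast$ and $x\in X^\beth$. Where you diverge from the paper is in what you identify as ``the heart of the argument.'' You propose to build $\Sigmabar(f)$ by dualizing the stalkwise maps $\overline{M}_{X',f(x)}\rightarrow\overline{M}_{X,x}$ and then assembling the resulting maps of compactified cones by hand, checking compatibility with generization and the colimit topology. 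The paper avoids this assembly entirely: the characteristic morphism $\phi_X\mathrel{\mathop:}(X,\overline{M}_X)\rightarrow F_X$ into the Kato fan packages all the stalkwise data into a single locally monoidal space, Lemma \ref{lemma_charmorfunc} produces $f^F\mathrel{\mathop:}F_X\rightarrow F_{X'}$ by gluing in the standard sheaf-theoretic way, and $\Sigmabar(f)$ is then just postcomposition $u\mapsto f^F\circ u$ on $\Rbar_{\geq 0}$-valued points (Proposition \ref{prop_KatofanECCfunc}); continuity and compatibility with face relations are automatic from the weak topology on $F(\Rbar_{\geq 0})$, and functoriality in $f$ is functoriality of composition. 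Your route would work, but you would end up re-proving, cone by cone, the compatibilities that the Kato-fan formalism encodes once and for all; the trade-off is that your version is more explicit and closer to the Gross--Siebert description of $\Sigma_X$ as a quotient of $\bigsqcup_x\Hom(\overline{M}_{X,x},\R_{\geq 0})$.

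One genuine omission: the theorem is stated for fine and saturated logarithmic schemes whose log structure may live on the \'etale site, in which case $\Sigmabar_X$ is a \emph{generalized} extended cone complex defined as a colimit over simple strict \'etale coverings. You mention generalized extended cone complexes but never address how $\Sigmabar(f)$ is defined in that setting: one must lift $f$ to compatible simple strict \'etale coverings (refining $X\times_{X'}V\rightarrow X$ for a covering $V\rightarrow X'$ until the pulled-back log structure becomes Zariski) and then check that the induced maps of extended cone complexes are compatible with the diagrams defining the colimits. This is formal, and the paper dispatches it with the universal property of colimits, but it is a step your plan currently skips.
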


Suppose now that $X$ is logarithmically smooth over $k$. Then $X$ has the structure of a toroidal embedding (see Section \ref{section_logsmooth} below).

\begin{theorem}\label{thm_skel=trop} If $X$ is logarithmically smooth, then $\trop_X$ has a section onto the skeleton of $X$, i.e. there is a homeomorphism $J_X\mathrel{\mathop:}\Sigmabar_X\xrightarrow{\sim} \mathfrak{S}(X)$ such that the diagram
\begin{center}\begin{tikzpicture}
  \matrix (m) [matrix of math nodes,row sep=2em,column sep=3em,minimum width=2em,ampersand replacement=\&]
  {  
  \& X^\beth \& \\ 
  \mathfrak{S}(X)  \& \& \Sigmabar_X  \\ 
  };
  \path[-stealth]
    (m-1-2) edge node [above left] {$\mathbf{p}_X$} (m-2-1)
    		edge node [above right] {$\trop_X$} (m-2-3)
    (m-2-3) edge node [below] {$J_X$} node [above] {$\sim$} (m-2-1);		
\end{tikzpicture}\end{center}
is commutative.
\end{theorem}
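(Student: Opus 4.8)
The plan is to reduce the statement to the toric local model provided by the toroidal structure on $X$ and then to appeal to Thuillier's toric comparison diagram. Since $X$ is normal and logarithmically smooth over the perfect field $k$, Kato's criterion (Sections \ref{section_Katoscriterion} and \ref{section_etalelog}) shows that $X$ carries the structure of a toroidal embedding and that the generalized extended cone complex $\Sigmabar_X$ attached to its logarithmic structure coincides with the one associated to this toroidal embedding in \cite{AbramovichCaporasoPayne_tropicalmoduli}. First I would fix an \'etale cover of $X$ by charts $U$ each admitting a strict \'etale morphism to a $T$-toric variety $X(\Delta)$, so that on each chart the logarithmic structure is pulled back from the toric boundary and, on a torus-invariant affine $U_\sigma=\Spec k[S_\sigma]$, the monoid of sections is $S_\sigma=\sigma^\vee\cap M$.

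On such a chart the three objects in the diagram are all explicit: Thuillier's retraction restricts to the toric retraction onto $\mathfrak{S}(X(\Delta))$, whose image under the embedding $i_\Delta$ is the canonical compactification $\overline{\sigma}=\Hom(S_\sigma,\Rbar_{\geq 0})$, while $\trop_X$ restricts to the toric map $\trop_\Delta$ sending $x$ to the homomorphism $s\mapsto -\log\vert\chi^s\vert_x$. I would define $J_X$ chartwise as the composite of the canonical identification $\Sigmabar_X\cong\overline{\Delta}$ with the inverse $i_\Delta^{-1}$ of Thuillier's embedding. Because both $\Sigmabar_X$ and $\mathfrak{S}(X)$ arise as the colimit of the same diagram of cones (respectively their canonical compactifications) indexed by the combinatorics of the toroidal embedding, these chartwise identifications are compatible with the gluing data and descend to a global homeomorphism $J_X\mathrel{\mathop:}\Sigmabar_X\rightarrow\mathfrak{S}(X)$; the functoriality from Theorem \ref{thm_tropfunc} guarantees that the local tropicalization maps agree on overlaps.

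To prove that the triangle commutes it suffices, by this local description and by functoriality, to check $J_X\circ\trop_X=\mathbf{p}$ on each chart. Applying $i_\Delta$ and using that $i_\Delta\circ J_X$ is by construction the identification $\Sigmabar_X\cong\overline{\Delta}\subseteq N_\R(\Delta)$, the identity reduces to the equality $\trop_\Delta=i_\Delta\circ\mathbf{p}$ on $X^\beth$, which is exactly \cite[Proposition 2.9]{Thuillier_toroidal}, together with the definitional fact that under $\Sigmabar_X\cong\overline{\Delta}$ the map $\trop_X$ is given on $U_\sigma^\beth$ by the same formula $s\mapsto-\log\vert\chi^s\vert_x$ as $\trop_\Delta$. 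Once $J_X\circ\trop_X=\mathbf{p}$ is known, the section property follows formally: since $\mathbf{p}$ restricts to the identity on $\mathfrak{S}(X)$, evaluating $J_X\circ\trop_X=\mathbf{p}$ on $\mathfrak{S}(X)$ shows $\trop_X\vert_{\mathfrak{S}(X)}=J_X^{-1}$, whence $\trop_X\circ J_X=\id_{\Sigmabar_X}$.

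The hard part will be the gluing step in the presence of self-intersections, where $\Sigmabar_X$ is a genuinely \emph{generalized} extended cone complex rather than the canonical compactification of a single cone complex. Here I expect the main obstacle to be verifying that the log-geometric colimit description of $\Sigmabar_X$ built from Kato fans matches, compatibly with $i_\Delta$, the topological colimit defining $\mathfrak{S}(X)$ in \cite{AbramovichCaporasoPayne_tropicalmoduli}, including the possible monodromy identifying faces of the cones. Establishing this compatibility --- and with it the global continuity of $J_X^{-1}$ --- is where the bulk of the work lies; the individual toric computations are routine given Thuillier's results.
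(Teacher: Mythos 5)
Your proposal follows essentially the same route as the paper's proof: reduce to an affine toric chart $\Spec k[P]$, where Thuillier's explicit description of $\mathbf{p}$ identifies $\mathfrak{S}(X)$ with $\Hom(P,\Rbar_{\geq 0})$, define $J_X$ chartwise, and glue. The only real differences are that the paper writes $J_X$ down explicitly, sending $u$ to the seminorm $\sum_p a_p\chi^p\mapsto\max_p\vert a_p\vert e^{-u(p)}$, and verifies $\trop_X\circ J_X=\id$ and surjectivity onto the skeleton directly rather than routing through $i_\Delta$ and \cite[Proposition 2.9]{Thuillier_toroidal}; and the gluing step you flag as the ``hard part'' is precisely what the paper closes by citation --- \cite[Lemme 3.14 and Corollaire 3.13]{Thuillier_toroidal} for the reduction to the affine toric case when $M_X$ is Zariski, and \cite[Propositions 3.29 and 3.31]{Thuillier_toroidal} for the facts that the retractions descend along a simple strict \'etale cover $X''\rightrightarrows X'\rightarrow X$ and that $\mathfrak{S}(X)$ is the colimit of $\mathfrak{S}(X'')\rightrightarrows\mathfrak{S}(X')$, which matches the colimit defining $\Sigmabar_X$ --- so no new monodromy argument is needed there.
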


\subsection{Tropicalizing subvarieties} Let $X$ be a logarithmic scheme locally of finite type over $k$ and let $Y\subseteq X$ be a closed subvariety. One may define the \emph{tropicalization} associated to $Y$ with respect to $X$ by setting
\begin{equation*}
\Trop_X(Y)=\trop_X(Y^\beth) \ .
\end{equation*}

In \cite[Theorem 1.1]{Ulirsch_tropcomplogreg} we have shown that, if $X$ is a Zariski logarithmic scheme that is logarithmically smooth over $k$ and if $Y$ non-trivially intersects the open locus $X_0$ of $X$ where the logarithmic structure is trivial, then $\Trop_X(Y)\cap\Sigma_X$ also carries the structure of a cone complex. Moreover, the further results of \cite{Ulirsch_tropcomplogreg} show that one can use the polyhedral geometry of $\Trop_X(Y)$ to study properties of $Y$, thought of as a partial embedded compactification of $Y_0=Y\cap X$, expanding on Tevelev's theory of \emph{tropical compactifications} (see \cite{Tevelev_tropcomp}). 

Let $Y$ be a closed subvariety of a $T$-toric variety $X$ and assume $Y\cap T\neq\emptyset$. Then there are two different ways to tropicalize $Y$: the first is to consider the classical Kajiwara-Payne tropicalization $\Trop_\Delta(Y)$ (see Section \ref{section_introtroptoric} above); the other is to endow $Y$ with the pullback of the logarithmic structure from $X$ and to consider its image in $\Sigmabar_X$. In Section \ref{section_toricvar} we give a detailed comparison of these two cases. These considerations together with the above two theorems lead to the following incarnation of the principle of \emph{faithful tropicalization} (see e.g. \cite{BakerPayneRabinoff_nonArchtrop}, \cite{GublerRabinoffWerner_skeletons&trop}, and \cite{CuetoHaebichWerner_tropGrass}).

\begin{corollary}\label{cor_schoen&faithful}
Suppose that $Y$ is a proper and sch\"on subvariety of a $T$-toric variety $X$ such that $Y\cap T\neq\emptyset$. Then the restriction 
\begin{equation*}
\trop_\Delta\vert_{Y^{an}}\mathrel{\mathop:} Y^{an}\longrightarrow \Trop_\Delta(Y)
\end{equation*}
of the Kajiwara-Payne tropicalization map has a unique continuous section 
\begin{equation*}
J_Y\mathrel{\mathop:}\Trop_\Delta(Y)\longrightarrow Y^{an}
\end{equation*} 
such that  $J_Y\circ\trop_\Delta\mathrel{\mathop:}Y^{an}\rightarrow \frakS(Y)$ is the deformation retraction onto the toroidal skeleton if and only if the intersection of $Y$ with every $T$-orbit in $X$ is non-empty and irreducible (i.e. has multiplicity one).
\end{corollary}


\subsection{The main idea of our construction} Our approach to the construction of $\trop_X$ is a global version of the \emph{local tropicalization map} defined by Popescu-Pampu and Stepanov in \cite[Section 6]{PopescuPampuStepanov_localtrop}. Fix a morphism $\alpha\mathrel{\mathop:}P\rightarrow A$ from a monoid $P$ into the multiplicative monoid of an algebra $A$ of finite type over $k$ (or more generally into the quotient $A/A^\ast$) and set $X=\Spec A$. The set $\sigmabar_P=\Hom(P,\Rbar_{\geq 0})$ is the canonical compactification of the rational polyhedral cone $\sigma_P=\Hom(P,\R_{\geq 0})$ and there is a natural continuous tropicalization map 
\begin{equation*}
\trop_\alpha\mathrel{\mathop:}X^\beth\longrightarrow\sigmabar_P
\end{equation*}
that is defined by sending $x\in X^\beth$ to the homomorphism $\trop_\alpha(x)\in\sigmabar_P=\Hom(P,\Rbar_{\geq 0})$ given by
\begin{equation*}
p\longmapsto -\log\big\vert\alpha(p)\big\vert_x
\end{equation*}
for $p\in P$. 

In the global case we proceed in two steps. Associated to a Zariski logarithmic scheme $X$ \emph{without monodromy}  (see Section \ref{section_monodromy} below) there is an essentially unique \emph{characteristic morphism} $\phi_X\mathrel{\mathop:}(X,\overline{\mathcal{O}}_X)\rightarrow F_X$ into a \emph{Kato fan} $F_X$, i.e. a locally monoidal space that is covered by affine patches $\Spec P$ for fine and saturated monoids $P$ (see \cite[Section 9]{Kato_toricsing} and Section \ref{section_Katofans} below). The set of $\Rbar_{\geq 0}$-valued points $F_X(\Rbar_{\geq 0})$ carries the structure of an extended cone complex $\Sigmabar_X$ and, as a heuristic, should be thought of as a non-Archimedean analytic space associated to $F_X$. The natural continuous tropicalization map 
\begin{equation*}
\trop_X\mathrel{\mathop:}X^\beth\longrightarrow\Sigmabar_X
\end{equation*}
is then formally defined as the "analytification" of the characteristic morphism $\phi_X$. 
In the special case that both $X=\Spec A$ and $F_X=\Spec P$ are affine, we have $\Sigmabar_X=\sigmabar_P$  and the tropicalization map $\trop_X$ is nothing but the local tropicalization map of Popescu-Pampu and Stepanov \cite{PopescuPampuStepanov_localtrop}.

In general, not every logarithmic scheme $X$ admits a characteristic morphism, a phenomenon that is due to the presence of \emph{monodromy} in the logarithmic structure (see Section \ref{section_monodromy}). In these cases we may still define a generalized cone complex $\Sigma_X$ as well as a tropicalization map $\trop_X\mathrel{\mathop:}X^\beth\rightarrow \Sigmabar_X$ by taking colimits over all strict \'etale covers by logarithmic schemes without monodromy. 


\subsection{Complements, applications, and further developments}

\subsubsection{} Our approach to the tropicalization of logarithmic schemes is very much inspired by ideas of M. Gross and Siebert that have been outlined in \cite[Appendix B]{GrossSiebert_logGromovWitten}; their construction is motivated by applications to logarithmic Gromov-Witten theory in analogy with the tropical part of an \emph{exploded manifold} in the sense of \cite{Parker_explosion}. Let $X$ be a fine and saturated Zariski logarithmic scheme locally of finite type over $k$. According to Gross and Siebert the tropicalization of $X$ is given as the space
\begin{equation*}
\Trop^{GS}(X)=\Big(\bigsqcup_{x\in X}\Hom(\overline{M}_{X,x},\R_{\geq 0})\Big)\Big/\sim \ ,
\end{equation*}
where the equivalence relation $\sim$ is induced by the duals of the generization maps $\overline{M}_{X,x'}\rightarrow\overline{M}_{X,x}$, whenever $x'$ is a specialization of $x$ in $X$. In fact, from our construction one can deduce that there is a natural homeomorphism
\begin{equation*}
\Trop^{GS}(X)\simeq\Sigma_X
\end{equation*} 
and $\Trop^{GS}(X)$ therefore naturally carries the structure of a generalized cone complex.


\subsubsection{} An alternative approach to the tropicalization of subvarieties of toric varieties, that is similar in spirit to our construction, can be found in \cite{GiansiracusaGiansiracusa_tropicalschemes}. Let $N$ be a free finitely generated abelian group and consider a rational polyhedral fan $\Delta$ in $N_\mathbb{R}$. Then $\Delta$ defines a toric variety $X_\Delta$ over the field $\mathbb{F}_1$ with one element and the associated sharp monoidal space $(X,\overline{\mathcal{O}}_X)=(X,\mathcal{O}_X/\mathcal{O}_X^\ast)$ turns out to be a toric Kato fan. Instead of "analytifying" $F_X$ by considering the $\Rbar_{\geq 0}$-valued points of $X_\Delta$, the authors of \cite{GiansiracusaGiansiracusa_tropicalschemes} work in the category of semiring scheme and consider the base change $X_\Delta\times_{\mathbb{F}_1}\mathbb{T}$, where $\mathbb{T}$ denotes the semi-ring of tropical numbers. 

In \cite{Lorscheid_tropicalschemes}, based on his theory of blueprints (see \cite{Lorscheid_blueprintsI}), Lorscheid proposes a much larger framework for tropicalization that generalizes both Giansiracusa and Giansiracusa's theory of tropical schemes and the tropical geometry of logarithmic schemes without monodromy. Let $\mathbb{T}_{\geq 0}$ be the semiring of non-negative tropical numbers. Lorscheid, in particular, shows that both $X^\beth$ and $\Sigmabar_X$ (and more generally $Y^\beth$ and the tropicalization $\Trop_X(Y)$ for a closed subset $Y$ of $X$) naturally arise as the set of $\mathbb{T}_{\geq 0}$-valued points of certain suitable chosen so-called \emph{ordered blue schemes} (see \cite[Theorem H]{Lorscheid_tropicalschemes}). In this language the tropicalization map $\trop_X$, as defined in this article, arises as the induced map on $\mathbb{T}_{\geq 0}$-valued points of an underlying morphism of ordered blue schemes, which one can think of as an enrichment of the characteristic morphism $\phi_X$ used in our construction. 


\subsubsection{} Let $X$ be a Zariski logarithmic scheme that is logarithmically smooth over the base field. In \cite{Gross_toroidalintersectiontheory} A. Gross has developed a version of tropical intersection theory on (extended) cone complexes that admit a weak embedding into a vector space generated by a lattice, expanding on the theory developed in this article. In particular, using his approach one can enrich the process of tropicalization to an operation on algebraic cycles on $X$ that non-trivially intersect $X_0$. 

In "good" cases, his approach also allows us to identify certain natural tropical intersection products with their algebraic counterparts. Applying these identities to the moduli space of rational logarithmic stable maps into a toric variety, we can deduce that certain rational tropical and algebraic (descendant) Gromov-Witten invariants agree (see e.g. \cite[Section 5]{Gross_toroidalintersectiontheory} and \cite[Theorem C]{Ranganathan_ratcurtorvarnonArch}), which provides us with a moduli-theoretic explanation of the classical Nishinou-Siebert correspondence theorem (see \cite{NishinouSiebert_correspondence}).


\subsubsection{} In \cite{AbramovichChenMarcusWise_boundedness} and \cite{AbramovichWise_invlogGromovWitten} (also see \cite{AbramovichChenMarcusUlirschWise_logsurvey} and \cite{Ulirsch_Artinfans}), the authors develop the theory of \emph{Artin fans}, an incarnation of the theory of Kato fans in the category of logarithmic algebraic stacks that is more suitable to deal with logarithmic structures that have monodromy. In particuar, for every logarithmic scheme there is an Artin fan $\calA_X$ and an essentially unique strict morphism $X\rightarrow\calA_X$ that is a lift of the characteristic morphism to this category. For example, if $X$ is a $T$-toric variety, the Artin fan $\calA_X$ is the toric quotient stack $\big[X\big/T\big]$. 

In \cite[Theorem 1.1]{Ulirsch_Artinfans} we show that defining $\trop_X$ as the "analytification" of the characteristic morphism is much more than a mere heuristic: There is a natural homeomorphism $\mu_X\mathrel{\mathop:}\big\vert\mathcal{A}_X^\beth\big\vert\rightarrow \Sigmabar_X$ from the topological space $\big\vert\mathcal{A}_X^\beth\big\vert$ underlying the non-Archimedean analytic stack $\mathcal{A}_X^\beth$ with $\Sigmabar_X$ making the diagram
\begin{center}\begin{tikzpicture}
  \matrix (m) [matrix of math nodes,row sep=2em,column sep=3em,minimum width=2em,ampersand replacement=\&]
  {  
  \& X^\beth \& \\ 
  \big\vert\calA_X^\beth\big\vert  \& \& \Sigmabar_X  \\ 
  };
  \path[-stealth]
    (m-1-2) edge node [above left] {$\phi_X^\beth$} (m-2-1)
    		edge node [above right] {$\trop_X$} (m-2-3)
    (m-2-1) edge node [below] {$\mu_X$} node [above] {$\sim$} (m-2-3);		
\end{tikzpicture}\end{center}
commute. If $X$ is a $T$-toric variety, this statement generalizes to the fact that the Kajiwara-Payne tropicalization map $\trop_\Delta\mathrel{\mathop:}X^{an}\rightarrow N_\R(\Delta)$ is a stack quotient $\big[X^{an}\big/T^\circ\big]$, where $T^\circ$ denotes the \emph{affinoid torus} of $T$, a non-Archimedean version of $S^1\otimes N$. This procedure, in particular, gives every rational polyhedral cone complex $\Sigma$ as well as its canonical compactification $\Sigmabar$ canonically the structure of a non-Archimedean analytic stack. 



\section{Monoids, cones, and monoidal spaces}\label{section_monoidscones&monoidalspaces}

\subsection{Monoids} A \emph{monoid} $P$ is a commutative semigroup with an identity element. All monoids will be written additively, unless noted otherwise. The non-negative real numbers together with addition form a monoid that is denoted by $\R_{\geq 0}$. Its monoid structure naturally extends to $\Rbar_{\geq 0}=\R_{\geq 0}\cup\{\infty\}$ by setting $a+\infty = \infty$ for all $a\in \Rbar_{\geq 0}$.

An \emph{ideal} $I$ in a monoid $P$ is a subset $I\subseteq P$ such that $p+I\subseteq I$ for all $p\in P$. Every monoid $P$ contains a unique maximal ideal $\mathfrak{m}_P=P-P^\ast$. An ideal $\mathfrak{p}$ in $P$ is called \emph{prime} if its complement $P-\mathfrak{p}$ in $P$ is a submonoid, or equivalently, if $p_1+p_2\in \mathfrak{p}$ already implies $p_1\in \mathfrak{p}$ or $p_2\in \mathfrak{p}$ for all $p_i\in P$. The complement of a prime ideal in $P$ is referred to as a \emph{face} of $P$.

The \emph{localization} of a monoid $P$ with respect to a submonoid $S$ is given by \begin{equation*}
S^{-1}P=\{p-s\vert p\in P, \ s\in S\} \ ,
\end{equation*}
where $p-s$ denotes an equivalence class of pairs $(p,s)\in P\times S$ under the equivalence relation
\begin{equation*}
(p,s)\sim (p',s') \Leftrightarrow \exists\  t\in S \textrm{ such that } p+s'+t=p'+s+t \ .
\end{equation*}
If $S$ is the set $\mathbb{N}\cdot f$ for an element $f\in P$ we write $P_f$ for the localization $S^{-1}P$ and if $S$ is the complement of a prime ideal $\mathfrak{p}$ in $P$ we denote $S^{-1}P$ by $P_\mathfrak{p}$. 

A monoid $P$ is called \emph{fine}, if it is finitely generated and the canonical homomorphism into the group $P^{gp}=P^{-1}P=\{p-q\vert\  p,q\in P\}$ is injective. It is said to be \emph{saturated} if, whenever $p\in P^{gp}$, the property $n\cdot p\in P$ for some $n\in\mathbb{N}_{> 0}$ already implies $p\in P$. An element $p\in P$ is called a \emph{torsion element}, if $n\cdot p =0$ for some $n\in\mathbb{N}_{> 0}$; it is called a \emph{unit}, if there is $q\in P$ such that $p+q=0$. Denote the subgroup of torsion elements in $P$ by $P^{tors}$ and the subgroup of units by $P^\ast$. A fine and saturated monoid $P$ is said to be \emph{toric}, if $P^{tors}=0$; any monoid $P$ is said to be \emph{sharp}, if $P^\ast=0$. We denote the category of fine and saturated by $\mathbf{fs-Mon}$ and the full subcategory of toric monoids by $\mathbf{tor-Mon}$.

\begin{lemma}\label{lemma_fsmondecomp}
Let $P$ be a fine and saturated monoid. 
\begin{enumerate}[(i)]
\item There is a toric submonoid $\widetilde{P}$ of $P$ such that $P=\tilde{P}\oplus P^{tors}$.
\item There exists a sharp submonoid $\overline{P}$ of $P$ such that $P=\overline{P}\oplus P^\ast$. 
\end{enumerate}
\end{lemma}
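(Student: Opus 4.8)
The plan is to reduce both statements to the structure theory of the finitely generated abelian group $P^{gp}$, using saturation to control how the relevant subgroups sit inside $P$.

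For part (i), note first that, since $P$ is fine, $P$ embeds into $P^{gp}$, a finitely generated abelian group with torsion subgroup $T=(P^{gp})^{tors}$. I would begin by observing that saturation forces $P^{tors}=T$: if $t\in P^{gp}$ satisfies $nt=0$ for some $n\in\N_{>0}$, then $nt=0\in P$, so $t\in P$ by saturation, and hence every torsion element of $P^{gp}$ already lies in $P$. The quotient $P^{gp}/T\cong\Z^r$ is free, so the projection $P^{gp}\to P^{gp}/T$ splits; a choice of section identifies a free subgroup $N\cong\Z^r$ with $P^{gp}=N\oplus T$. I would then set $\tilde{P}=P\cap N$ and verify $P=\tilde{P}\oplus P^{tors}$: writing $p\in P$ as $p=n+t$ with $n\in N$ and $t\in T$, the component $t$ lies in $T\subseteq P$, so $n=p-t\in P$ as well, whence $n\in\tilde{P}$; uniqueness is inherited from the direct sum in $P^{gp}$. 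Finally $\tilde{P}$ is toric: it is torsion-free as a submonoid of $N$, it is saturated because $P$ is (if $x\in\tilde{P}^{gp}\subseteq N$ and $mx\in\tilde{P}\subseteq P$, then $x\in P$ by saturation, so $x\in P\cap N=\tilde{P}$), and it is finitely generated since it coincides with the image of $P$ under the projection $P^{gp}\to N$ along $T$.

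For part (ii) the argument runs in parallel, replacing $T$ by the unit group $P^\ast$. The one substantive new step is to show that $P^{gp}/P^\ast$ is torsion-free, which is exactly where both finiteness and saturation enter: if $x\in P^{gp}$ with $mx\in P^\ast$ for some $m\in\N_{>0}$, then both $mx$ and $-(mx)$ lie in $P$, so saturation yields $x\in P$ and $-x\in P$, i.e. $x\in P^\ast$. Hence $P^{gp}/P^\ast\cong\Z^s$ is free, the projection splits, and I obtain a free subgroup $L$ with $P^{gp}=L\oplus P^\ast$. Setting $\overline{P}=P\cap L$ and repeating the decomposition argument from part (i) (now using $P^\ast\subseteq P$ to conclude $\ell=p-u\in P$ for the units component $u$) gives $P=\overline{P}\oplus P^\ast$, and $\overline{P}$ is sharp because any unit of $\overline{P}$ lies in $L\cap P^\ast=0$.

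The proof is essentially formal once the two torsion-freeness facts are in hand, so I expect the main obstacle to be part (ii)'s claim that $P^{gp}/P^\ast$ has no torsion: this is the step that genuinely combines the \emph{fine} and \emph{saturated} hypotheses and guarantees that the quotient is free, and hence that the splitting exists at all. Everything else — finite generation of $\tilde{P}$ and $\overline{P}$, their saturation, and the internal direct sum decompositions — follows routinely from the containments $P^{tors}\subseteq P$ and $P^\ast\subseteq P$ together with the splittings constructed in $P^{gp}$.
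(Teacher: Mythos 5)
Your proof is correct and follows essentially the same route as the paper: split $P^{gp}$ as a direct sum (using saturation to identify $P^{tors}=(P^{gp})^{tors}$ in (i) and to show $P^{gp}/P^\ast$ is torsion-free in (ii)), intersect $P$ with the chosen free complement, and check the internal direct sum via $p-t\in P$, resp.\ $p-u\in P$. Your write-up is in fact slightly more complete, since you spell out that $nq\in P^\ast$ forces both $q\in P$ and $-q\in P$, and you explicitly verify that $\tilde{P}$ is toric and $\overline{P}$ is sharp, points the paper leaves implicit.
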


For the convenience of the reader we provide proofs of these two well-known statements.

\begin{proof}[Proof of Lemma \ref{lemma_fsmondecomp}]
The abelian group $Q=P^{gp}$ is finitely generated. So we can find a finitely generated free abelian subgroup $\widetilde{Q}$ of $Q$ such that $Q=\widetilde{Q}\oplus Q^{tors}$. Note that hereby $Q^{tors}=P^{tors}$, since $n\cdot q=0\in P$ for $q\in Q$ and some $n\in\N_{>0}$ already implies $q\in P$ using that $P$ is saturated.
Set $\widetilde{P}=P\cap\tilde{Q}$. Every $p\in P$ can be uniquely written as $\tilde{p}+t$ with $\tilde{p}\in \widetilde{Q}$ and $t\in P^{tors}$ and we have $\tilde{p}=p-t\in P$. Thus $P=\tilde{P}\oplus P^{tors}$. This proves part (i).

In view of (i), we may assume $P^\ast=0$ for the proof of part (ii). Given $q\in Q$ such that $n\cdot q\in P^\ast$ for some $n\in\N_{>0}$, we already have $q\in P$, since $P$ is saturated. Therefore $P^\ast$ is a saturated abelian subgroup in $Q$, i.e. $Q/P^\ast$ is free, and we can find a subgroup $\overline{Q}$ of $Q$ such that $Q=\overline{Q}\oplus P^\ast$. So every element $p\in P$ can be uniquely written as $\overline{p}+u$ with $\overline{p}\in \overline{Q}$ and $u\in P^\ast$. Set $\overline{P}=P\cap\overline{Q}$. Since $\overline{p}=p-u\in P$, this implies $P=\overline{P}\oplus P^\ast$.
\end{proof}

In a slight abuse of notation, we write $\widetilde{P}$ for the toric monoid $P/P^{tors}$ and $\overline{P}$ for the sharp monoid $P/P^\ast$.

\subsection{Rational polyhedral cones} A \emph{strictly convex rational polyhedral cone} (or short: a \emph{rational polyhedral cone}) is a pair $(\sigma,N)$ consisting of a finitely generated free abelian group $N$ and a strictly convex rational polyhedral cone $\sigma\subseteq N_\R=N\otimes\R$, i.e. a finite intersection of half spaces 
\begin{equation*}
H_i=\big\{u\in N_\R\big\vert\langle u,v_i\rangle\geq 0\big\}  \ ,
\end{equation*}
where $v_i\in M$ such that $\sigma$ does not contain any non-trivial linear subspaces. We refer to   \cite[Section 1.2]{Fulton_toricvarieties} and \cite[Appendix A]{Gubler_guide} for the essential background on these notions. Note hereby that Gubler \cite{Gubler_guide} calls rational polyhedral cones \emph{pointed integral polyhedral cones}. We denote the \emph{relative interior} of a rational polyhedral cone $\sigma$, i.e. the interior of $\sigma$ in its span in $N_\R$, by $\mathring{\sigma}$. A morphism $f\mathrel{\mathop:}(\sigma,N)\rightarrow(\sigma', N')$ of rational polyhedral cones is given by an element $f\in\Hom(N,N')$ such that $f(\sigma)\subseteq\sigma'$. 

Consider now the functor $\sigma$ on $\mathbf{fs-Mon}^{op}$ that associates to a fine and saturated monoid $P$ the rational polyhedral cone $(\sigma_P,N_P)$ given by 
\begin{equation*}
N_P=\Hom(P^{gp},\Z)
\end{equation*}
and
\begin{equation*}
\sigma_P=\Hom(P,\R_{\geq 0})
=\big\{u\in\Hom(P^{gp},\R)\big\vert u(p)\geq 0 \text{ for all } p\in P\big\}\subseteq (N_P)_\R \ .
\end{equation*}
It is immediate that a morphism $f\mathrel{\mathop:}Q\rightarrow P$ of fine and saturated monoids $P$ and $Q$ induces a morphism $\sigma(f)\mathrel{\mathop:}(\sigma_P,N_P)\rightarrow(\sigma_Q,N_Q)$ and that the association $f\mapsto \sigma(f)$ is functorial in $f$.

\begin{proposition}\label{prop_rpc=fsmon}
The functor $\sigma$ induces an equivalence between the category of toric monoids and the category of rational polyhedral cones. 
\end{proposition}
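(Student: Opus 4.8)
The plan is to exhibit an explicit quasi-inverse to $\Sigma$ and to verify that the restriction of $\Sigma$ to toric monoids is an equivalence, the decisive input being the biduality of rational polyhedral cones combined with the saturatedness of toric monoids. First I would construct the candidate inverse $\Psi\mathrel{\mathop:}\mathbf{RPC}\to\mathbf{tor-Mon}^{op}$. Writing $M=\Hom(N,\Z)$ and $\sigma^\vee=\{m\in M_\R\mid\langle u,m\rangle\geq 0\text{ for all }u\in\sigma\}$ for the dual cone, I set $\Psi(\sigma,N)=S_\sigma\mathrel{\mathop:}=\sigma^\vee\cap M$. By Gordan's lemma \cite[Section 1.2]{Fulton_toricvarieties} the monoid $S_\sigma$ is finitely generated, and as a submonoid of the free abelian group $M$ it is automatically torsion-free, hence toric; saturatedness follows at once from convexity of $\sigma^\vee$. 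A cone morphism $\phi\mathrel{\mathop:}(\sigma,N)\to(\sigma',N')$ dualizes to $\phi^\vee\mathrel{\mathop:}M'\to M$, which carries $S_{\sigma'}$ into $S_\sigma$ by a direct pairing computation, so $\Psi$ is a well-defined functor.

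The heart of the argument is a biduality statement on the monoid side: for a toric monoid $P$ one has $P=\sigma_P^\vee\cap P^{gp}$ inside $(P^{gp})_\R$. To see this, let $C_P$ denote the closed rational polyhedral cone generated by $P$; then $\sigma_P=\Hom(P,\R_{\geq 0})=C_P^\vee$, and the double-dual theorem for closed convex cones \cite[Appendix A]{Gubler_guide} gives $\sigma_P^\vee=C_P$. Finally $C_P\cap P^{gp}=P$, because a lattice point of the rational cone $C_P$ is a nonnegative rational combination of the generators of $P$ and hence has a positive multiple lying in $P$, so that the point itself lies in $P$ by saturatedness. This is the step I expect to be the main obstacle, since it is exactly where fineness and saturatedness enter and where the convex-geometric input (biduality) and the arithmetic input (Gordan's lemma, saturation) must be combined; once it is established, everything else is formal.

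Combining this with the dual statement $(\sigma^\vee)^\vee=\sigma$ and the density of rational directions in a rational cone yields the object-level isomorphisms $\Psi\circ\Sigma\cong\id$ and $\Sigma\circ\Psi\cong\id$. Here I would record two places where the hypotheses are used exactly: torsion-freeness of $P$ makes $P^{gp}$ free, so that $N_P=\Hom(P^{gp},\Z)$ remembers $P^{gp}$ (this is precisely why the equivalence is with $\mathbf{tor-Mon}^{op}$ and not $\mathbf{fs-Mon}^{op}$); and strict convexity of $\sigma$ makes $\sigma^\vee$ full-dimensional, whence $S_\sigma$ spans $M_\R$ and, being saturated, satisfies $S_\sigma^{gp}=M$, giving $N_{S_\sigma}=N$ and $\sigma_{S_\sigma}=(\sigma^\vee)^\vee=\sigma$.

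It then remains to verify full faithfulness. A homomorphism $f\mathrel{\mathop:}Q\to P$ of toric monoids produces $\Sigma(f)=\Hom(f^{gp},\Z)\mathrel{\mathop:}(\sigma_P,N_P)\to(\sigma_Q,N_Q)$, and $f$ is recovered from $f^{gp}$ since $Q\hookrightarrow Q^{gp}$. Conversely, from a cone morphism $\phi\mathrel{\mathop:}(\sigma_P,N_P)\to(\sigma_Q,N_Q)$ I would form $\phi^\vee\mathrel{\mathop:}Q^{gp}\to P^{gp}$ and check that it maps $Q$ into $P$: for $q\in Q\subseteq\sigma_Q^\vee$ and any $u\in\sigma_P$ one has $\langle u,\phi^\vee(q)\rangle=\langle\phi(u),q\rangle\geq 0$ because $\phi(u)\in\sigma_Q$, so $\phi^\vee(q)\in\sigma_P^\vee\cap P^{gp}=P$ by the biduality of the previous paragraph. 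These two assignments are mutually inverse, giving a bijection $\Hom(Q,P)\cong\Hom_{\mathbf{RPC}}(\Sigma_P,\Sigma_Q)$. Together with the essential surjectivity supplied by the object-level isomorphisms, this shows that $\Sigma$ restricts to an equivalence $\mathbf{tor-Mon}^{op}\simeq\mathbf{RPC}$.
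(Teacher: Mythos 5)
Your proposal is correct and follows essentially the same route as the paper: both construct the quasi-inverse $(\sigma,N)\mapsto S_\sigma=\sigma^\vee\cap M$ via Gordan's lemma and then check it is inverse to $\Sigma$. The paper leaves that verification as "easy to check," whereas you supply the missing details (the biduality $P=\sigma_P^\vee\cap P^{gp}$, the identity $S_\sigma^{gp}=M$ coming from full-dimensionality of $\sigma^\vee$, and full faithfulness), all of which are accurate.
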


\begin{proof}
Consider the functor $(.)^\vee$ that sends $(\sigma, N)$ in $\mathbf{RPC}$ to $S_\sigma=\sigma^{\vee}\cap M$, where $M=\Hom(N,\Z)$ and 
\begin{equation*}
\sigma^\vee=\big\{v\in M_\R\big\vert \langle u,v\rangle\geq 0 \text{ for all } v\in\sigma \big\} \ . 
\end{equation*}
By Gordon's Lemma \cite[Section 1 Proposition 1]{Fulton_toricvarieties} the monoid $S_\sigma$ is finitely generated and it is immediate that $S_\sigma$ is integral, saturated, and torsion-free. A morphism $f\mathrel{\mathop:}(\sigma,N)\rightarrow(\sigma', N')$ induces a homomorphism $f^{\vee}\mathrel{\mathop:}S_{\sigma'}\rightarrow S_\sigma$ and this association is functorial in $f$. It is now easy to check that $(.)^\vee$ is an inverse to $\sigma$ using Lemma \ref{lemma_fsmondecomp} (i).
\end{proof}

By Lemma \ref{lemma_fsmondecomp} (ii) the category of sharp toric monoids corresponds to rational polyhedral cones $(\sigma,N)$ that are \emph{sharp}, i.e. those cones $\sigma$ whose span in $N_\R$ is equal to $N_\R$. Throughout this article we are going to assume that all of our cones are sharp. In a slight abuse of notation we are therefore going to denote the category of sharp cones by $\mathbf{RPC}$; an object in $\mathbf{RPC}$ will simply be referred to as a \emph{cone} and written as $\sigma$ without explicit reference to the lattice $N$. 

A \emph{face morphism} $\tau\rightarrow\sigma$ is a morphism of cones that induces an isomorphism onto a (not necessarily proper) face of $\sigma$. Note that we explicitly allow automorphisms of a cone $\sigma$ in the class of face morphism. If we want to $\tau$ to be isomorphic to a proper face of $\sigma$, we refer to $\tau\rightarrow\sigma$ as a \emph{proper face morphism}. 
 
\subsection{Monoidal spaces}\label{section_monoidalspaces}

A \emph{locally monoidal space} is a pair $(X,\mathcal{O}_X)$ consisting of a topological space $X$ together with a sheaf of monoids $\mathcal{O}_X$. Given two locally monoidal spaces $(X,\mathcal{O}_X)$ and $(Y,\mathcal{O}_Y)$, a morphism of  locally monoidal spaces is a continuous map $f\mathrel{\mathop:}X\rightarrow Y$ together with a morphism $f^\dagger\mathrel{\mathop:}f^\ast\mathcal{O}_Y\rightarrow \mathcal{O}_X$  of sheaves of monoids such that the induced homomorphism $f_x^\dagger\mathrel{\mathop:}\mathcal{O}_{Y,f(x)}\rightarrow \mathcal{O}_{X,x}$ is a local homomorphism of monoids for all $x\in X$. This means that $f_x^\dagger(\mathfrak{m}_{Y,f(x)})\subseteq \mathfrak{m}_{X,x}$ for the unique maximal ideals $\mathfrak{m}_{Y,f(x)}$ and $\mathfrak{m}_{X,x}$ in $ \mathcal{O}_{X,x}$ and $\mathcal{O}_{Y,y}$ respectively. 

Denote the category of locally monoidal spaces by $\mathbf{LMS}$. A morphism $f\mathrel{\mathop:}X\rightarrow Y$ in $\mathbf{LMS}$ is said to be \emph{strict}, if the induced morphism $f^{-1}\mathcal{O}_Y\rightarrow\mathcal{O}_X$ is an isomorphism of sheaves of monoids on $X$. All schemes are implicitly thought of as monoidal spaces with respect to multiplication on $\mathcal{O}_X$. A monoidal space $(X,\mathcal{O}_X)$ is said to be \emph{sharp}, if $\mathcal{O}_{X,x}^\ast = 0$ for all $x\in X$. The category $\mathbf{SMS}$ of sharp monoidal spaces is a full and faithful subcategory of $\mathbf{LMS}$ and the association $(X,\mathcal{O}_X)\mapsto\overline{X}=(X,\overline{\mathcal{O}}_X)$ with $\overline{\mathcal{O}}_X=\mathcal{O}_X/\mathcal{O}_X^\ast$ defines a retraction functor onto this subcategory. 


\section{Kato fans, cone complexes, and their extensions}\label{section_Katofans&conecomplexes}

\subsection{Kato fans}\label{section_Katofans}

In \cite[Section 9]{Kato_toricsing} K. Kato introduces the notion of a \emph{fan} that serves as a geometric model for the dual category $\mathbf{sh-Mon}^{op}$ of the category $\mathbf{sh-Mon}$ of sharp monoids (also see \cite[Section 3.5]{GabberRomero_foundationsalmostring} for further details). These objects should be thought of as analogues of schemes, where, instead of rings, we allow monoids as the fundamental building blocks.  

In particular, there is a functor
\begin{equation*}
\Spec\mathrel{\mathop:}\mathbf{Mon}^{op}\longrightarrow\mathbf{SMS}
\end{equation*}
that associates to a monoid $P$ a sharp monoidal space $\Spec P$, called the \emph{spectrum} of $P$. By \cite[Proposition 9.2]{Kato_toricsing} the spectrum $\Spec P$ is uniquely determined by representing the functor $\mathbf{SMS}\rightarrow\mathbf{Sets}$ that associates to a \emph{sharp} monoidal space $(X,\mathcal{O}_X)$ the set of homomorphisms $\Hom\big(P,\mathcal{O}_X(X)\big)$. 

As a set $\Spec P$ is equal to set of prime ideals of $P$ and its topology is the one generated by the open sets $D(f)=\{\mathfrak{p}\in\Spec P\vert f\notin\mathfrak{p}\}$ for $f\in P$. The structure sheaf $\mathcal{O}_F$ on $F=\Spec P$ is determined by the association
\begin{equation*}
D(f)\longmapsto P_f/P_f^\ast
\end{equation*}
and, consequently, the stalk of $\mathcal{O}_F$ at $\mathfrak{p}\in\Spec P$ is given by
\begin{equation*}
\mathcal{O}_{F,\mathfrak{p}}=P_\mathfrak{p}/P_\mathfrak{p}^\ast \ .
\end{equation*}
A morphism $\phi\mathrel{\mathop:}Q\rightarrow P$, induces a morphism $\phi^\#\mathrel{\mathop:}\Spec P\rightarrow\Spec Q$ that is given by the association $\mathfrak{p}\mapsto\phi^{-1}(\mathfrak{p})$ and the induced morphisms on the structure sheaves.

\begin{remark} \label{remark_torusorbits}A toric monoid $P$ defines a toric variety $X_P=\Spec k[P]$. The affine Kato fan $\Spec P$ is naturally homeomorphic to $\Xi(X_P)$, the set of generic points of the $T=\Spec k[P^{gp}]$-orbits in $X_P$. 
\end{remark}

\begin{proposition}\label{prop_afffans=shmon}
The functor $\Spec$ defines an equivalence between the category of sharp monoids and the category of affine Kato fans.
\end{proposition}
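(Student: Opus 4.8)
The plan is to show that the functor $\Spec$, restricted to sharp monoids, is both essentially surjective onto $\mathbf{aff-Fans}$ and fully faithful; these two properties together yield an equivalence of categories. The essential input will be Kato's representability result \cite[Proposition 9.2]{Kato_toricsing} recalled above, which identifies $\Hom_{\mathbf{SMS}}\big((X,\mathcal{O}_X),\Spec Q\big)$ with $\Hom_{\mathbf{Mon}}\big(Q,\mathcal{O}_X(X)\big)$ naturally in the sharp monoidal space $(X,\mathcal{O}_X)$.

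For essential surjectivity, recall that every affine Kato fan is by definition isomorphic to $\Spec P$ for some, not necessarily sharp, monoid $P$, so it suffices to observe that $\Spec P$ depends only on the sharp monoid $\overline{P}=P/P^\ast$. Indeed, the units of $P$ lie in no prime ideal, so the quotient $P\to\overline{P}$ induces an inclusion-preserving bijection on prime ideals and hence a homeomorphism of underlying spaces matching the basic opens $D(f)$; moreover the structure sheaves agree, since $P_f/P_f^\ast=\overline{P}_{\overline{f}}/\overline{P}_{\overline{f}}^\ast$. Thus $\Spec P\cong\Spec\overline{P}$ with $\overline{P}$ sharp, and every affine Kato fan lies in the essential image of $\mathbf{sh-Mon}^{op}$.

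For full faithfulness, fix sharp monoids $P$ and $Q$. First I would compute the global sections of $\Spec P$: since $0$ is a unit it lies in no prime ideal, so $D(0)=\Spec P$ and $\mathcal{O}_{\Spec P}(\Spec P)=P_0/P_0^\ast=P/P^\ast=P$, using that $P$ is sharp. Applying the representability statement with $(X,\mathcal{O}_X)=\Spec P$ then produces a natural bijection
\[
\Hom_{\mathbf{SMS}}(\Spec P,\Spec Q)\;\cong\;\Hom_{\mathbf{Mon}}\big(Q,P\big)\;=\;\Hom_{\mathbf{sh-Mon}^{op}}(P,Q).
\]
It remains to check that this bijection is exactly the map induced by the functor $\Spec$. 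This is a Yoneda-type verification: the representing isomorphism sends a morphism $f\colon\Spec P\to\Spec Q$ to the homomorphism it induces on global sections $Q=\mathcal{O}_{\Spec Q}(\Spec Q)\to\mathcal{O}_{\Spec P}(\Spec P)=P$, whereas $\Spec$ sends a homomorphism $\phi\colon Q\to P$ to $\phi^\#$, whose effect on global sections recovers $\phi$; the two assignments are mutually inverse. Hence $\Spec$ is fully faithful on sharp monoids, and combined with the previous paragraph it is an equivalence.

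The genuine content is concentrated in the representability statement \cite[Proposition 9.2]{Kato_toricsing}, which already encodes that a morphism of sharp monoidal spaces into a spectrum is determined by, and arises from, its action on global sections; were one to prove this from scratch, the delicate point would be to recover a morphism of monoidal spaces from a monoid homomorphism and to verify that the induced stalk maps are local, in exact analogy with the classical equivalence $\mathbf{CRing}^{op}\simeq\mathbf{AffSch}$. Granting Kato's result, the only remaining care is the bookkeeping identifying the representing bijection with the one induced by $\Spec$, together with the elementary reduction to sharp monoids used for essential surjectivity.
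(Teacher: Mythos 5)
Your proof is correct and rests on the same two pillars as the paper's: the computation $\mathcal{O}_{\Spec P}(\Spec P)=P/P^\ast=P$ for sharp $P$, and Kato's representability of $\Spec$. The paper packages this as "global sections give a quasi-inverse" while you verify fully faithful plus essentially surjective (spelling out the reduction $\Spec P\cong\Spec\overline{P}$ that the paper leaves implicit), but the mathematical content is the same.
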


\begin{proof}
Given a sharp monoid $P$, we have $\mathcal{O}_{\Spec P}(\Spec P)=P/P^\ast=P$ and, conversely, for an affine Kato fan $F$ the identity $\Spec \mathcal{O}_F(F)=F$. So taking global sections of the structure sheaf defines an inverse to $\Spec$. 
\end{proof}

Note that the specialization relation defines a partial order on $\Spec P$ with a unique minimal element $\emptyset$ and a unique maximal element $\mathfrak{m}_P=P-P^\ast$. In the following pictures we indicate specialization by an arrow. 

\begin{examples}\begin{enumerate}[(i)]
\item If $P=\R_{\geq 0}$, then $\Spec\R_{\geq 0}$ consists of the two prime ideals $\emptyset$ and $\R_{>0}$ in the monoid $\R_{\geq 0}$. 
\begin{center}\begin{tikzpicture}
\fill (0,0) circle (0.12 cm)
      (2,0) circle (0.05 cm);
\draw [->] (0.4,0) -- (1.7,0);
\node at (0,0.5) {$0$};
\node at (2,0.5) {$\R_{\geq 0}$};
\end{tikzpicture}\end{center}
In this picture we indicate the stalk of the structure sheaf at each point. If $P=\Rbar_{\geq 0}$, then $\Spec\Rbar_{\geq 0}$ contains, in addition to $\emptyset$ and $\Rbar_{>0}$, also the prime ideal $\{\infty\}$. 

\item Similarly, if $P=\N$, then $F_{\A^1}=\Spec \N$ consists of the two prime ideals $\emptyset$ and $\N_{> 0}$ in the monoid $\N$.
\begin{center}\begin{tikzpicture}
\fill (0,0) circle (0.12 cm)
      (2,0) circle (0.05 cm);
\draw [->] (0.4,0) -- (1.7,0);
\node at (0,0.5) {$0$};
\node at (2,0.5) {$\NN$};
\end{tikzpicture}\end{center}
Note that the topological spaces underlying $\Spec \N$ and $\Spec \R_{\geq 0}$ are the same; the crucial difference lies in their sheaf of functions.

\item Suppose $P=\N^2$. The spectrum $F_{\A^2}=\Spec \N^2$ consists of the four elements $\emptyset$, $\N\times\N_{>0}$, $\N_{>0}\times\N$, and $\N^2-\{0\}$. 
\begin{center}\begin{tikzpicture}
\fill (1,1) circle (0.20 cm)
      (3,1) circle (0.12 cm)
      (1,3) circle (0.12 cm)
      (3,3) circle (0.05 cm);
\draw [->] (1.5,1) -- (2.6,1);
\draw [->] (1.4,3) -- (2.7,3);
\draw [->] (1,1.5) -- (1,2.6);
\draw [->] (3,1.4) -- (3,2.7);

\node at (0.5,0.5) {$0$};
\node at (3.5,0.5) {$\NN$};
\node at (0.5,3.5) {$\NN$};
\node at (3.5,3.5) {$\NN^2$};
\end{tikzpicture}\end{center}
This visualization immediately generalizes to $P=\N^k$ for all positive integers $k$.
\item Let $P$ be the monoid generated by $p,q,r$ subject to the relation $p+r=2q$. Then $\Spec P$ has the following four points: $\emptyset$, $P-\N\cdot p$, $P-\N\cdot r$, and $P-\{0\}$.
\begin{center}\begin{tikzpicture}
\fill (1,1) circle (0.20 cm)
      (3,1) circle (0.12 cm)
      (1,3) circle (0.12 cm)
      (3,3) circle (0.05 cm);
\draw [->] (1.5,1) -- (2.6,1);
\draw [->] (1.4,3) -- (2.7,3);
\draw [->] (1,1.5) -- (1,2.6);
\draw [->] (3,1.4) -- (3,2.7);
\node at (0.5,0.5) {$0$};
\node at (3.5,0.5) {$\NN$};
\node at (0.5,3.5) {$\NN$};
\node at (3.5,3.5) {$P$};
\end{tikzpicture}\end{center}
\end{enumerate}\end{examples}

In analogy with the category of schemes that extends the dual of the category of rings, K. Kato \cite{Kato_toricsing} introduces the category of \emph{Kato fans} the extends the dual of the category of sharp monoids. 

\begin{definition}
A \emph{Kato fan} $F$ is a sharp monoidal space that admits a covering by open subset $U_i$ isomorphic to $\Spec P_i$ for some monoids $P_i$.
\end{definition} 

In the remainder of this article, unless noted otherwise, we are going to assume that every Kato fan is \emph{locally fine and saturated}, i.e. that we may choose the $P_i$ as above to be fine and saturated. Denote the category of (locally fine and saturated) Kato fans by $\mathbf{Fans}$.

\begin{examples}\begin{enumerate}[(i)]
\item Given two copies $U_0$ and $U_1$ of $\Spec\NN$, we can glue them over the generic point $\{\emptyset\}$. This defines the Kato fan $F_{\PP^1}$. 
\begin{center}\begin{tikzpicture}
\fill (0,0) circle (0.12 cm)
      (2,0) circle (0.05 cm)
      (-2,0) circle (0.05 cm);
\draw [->] (0.4,0) -- (1.7,0);
\draw [->] (-0.4,0) -- (-1.7,0);

\node at (0,0.5) {$0$};
\node at (2,0.5) {$\NN$};
\node at (-2,0.5) {$\NN$};

\end{tikzpicture}\end{center}
\item Consider three copies $U_0$, $U_1$, and $U_2$ of $\Spec\NN^2$ with coordinates $p_0,q_0$, $p_1,q_1$, and $p_2,q_2$ respectively. Glue these affine fans with respect to the isomorphisms 
\begin{equation*}\begin{split}
D_{U_0}(p_0)&\simeq\Spec \NN\simeq D_{U_1}(q_1) \\
D_{U_1}(p_1)&\simeq\Spec\NN\simeq D_{U_2}(q_2) \\
D_{U_2}(p_2)&\simeq\Spec\NN\simeq D_{U_0}(q_0)
\end{split}\end{equation*}
in order to obtain the Kato fan $F_{\PP^2}$. 
\begin{center}\begin{tikzpicture}
\fill (0,0) circle (0.20 cm)
      (2,0) circle (0.12 cm)
      (-1.41,-1.41) circle (0.12 cm)
      (0,2) circle (0.12 cm)
      (2,2) circle (0.05 cm)
      (0.59,-1.41) circle (0.05 cm)
      (-1.41,0.59) circle (0.05 cm);
      
\draw [->] (0.5,0) -- (1.6,0);
\draw [->] (0.4,2) -- (1.7,2);
\draw [->] (0,0.5) -- (0,1.6);
\draw [->] (2,0.4) -- (2,1.7);
\draw [->] (-1.41,-1.01) -- (-1.41,0.29);
\draw [->] (-1.01,-1.41) -- (0.29,-1.41);
\draw [->] (-0.35,-0.35) -- (-1.13,-1.13);
\draw [->] (-0.45, 1.55) -- (-1.2,0.8);
\draw [->] (1.55,-0.45) -- (0.8,-1.2);

\node at (0.5,0.5) {$0$};
\node at (2.5,0) {$\NN$};
\node at (0,2.5) {$\NN$};
\node at (2.5,2.5) {$\NN^2$};
\node at (-1.91,0.59) {$\NN^2$};
\node at (0.59,-1.91) {$\NN^2$};
\node at (-1.91,-1.91) {$\NN$};
\end{tikzpicture}\end{center}
Glueing $k+1$ copies of $\Spec \N^k$ in an analogous manner, gives rise to Kato fans $F_{\PP^k}$ for all integers $k$. 
\item Given four copies $U_1$, $U_2$, $U_3$, and $U_4$ of $\Spec\NN^2$ with generators $p_1,q_1$, $p_2,q_2$, $p_3,q_3$, and $p_4,q_4$ respectively, we can glue these affine Kato fans via the isomorphisms
\begin{equation*}\begin{split}
D_{U_1}(p_1)&\simeq\Spec \NN\simeq D_{U_2}(q_2)\\ 
D_{U_2}(p_2)&\simeq\Spec\NN\simeq D_{U_3}(q_3)\\
D_{U_3}(p_3)&\simeq\Spec\NN\simeq D_{U_4}(q_4)\\
D_{U_4}(p_4)&\simeq\Spec\NN\simeq D_{U_1}(q_1) \ . 
\end{split}\end{equation*}
The Kato fan obtained this way will be denoted by $F_{\PP^1\times\PP^1}$. 

\begin{center}\begin{tikzpicture}
\fill (0,0) circle (0.20 cm)
      (2,0) circle (0.12 cm)
      (-2,0) circle (0.12 cm)
      (0,2) circle (0.12 cm)
      (0,-2) circle (0.12 cm)
      (2,2) circle (0.05 cm)
      (2,-2) circle (0.05 cm)
      (-2,2) circle (0.05 cm)
      (-2,-2) circle (0.05 cm);
\draw [->] (0.5,0) -- (1.6,0);
\draw [->] (0.4,2) -- (1.7,2);
\draw [->] (0,0.5) -- (0,1.6);
\draw [->] (2,0.4) -- (2,1.7);
\draw [->] (-0.5,0) -- (-1.6,0);
\draw [->] (-0.4,-2) -- (-1.7,-2);
\draw [->] (0,-0.5) -- (0,-1.6);
\draw [->] (-2,-0.4) -- (-2,-1.7);
\draw [->] (-2,0.4) -- (-2,1.7);
\draw [->] (2,-0.4) -- (2,-1.7);
\draw [->] (0.4,-2) -- (1.7,-2);
\draw [->] (-0.4,2) -- (-1.7,2);

\node at (0.5,0.5) {$0$};
\node at (2.5,0) {$\NN$};
\node at (0,2.5) {$\NN$};
\node at (2.5,2.5) {$\NN^2$};
\node at (-2.5,0) {$\NN$};
\node at (0,-2.5) {$\NN$};
\node at (-2.5,-2.5) {$\NN^2$};
\node at (2.5,-2.5) {$\NN^2$};
\node at (-2.5,2.5) {$\NN^2$};
\end{tikzpicture}\end{center}
An immediate generalization of this construction yields the Kato fans $F_{(\PP^1)^k}$ for all positive integers $k$.
\end{enumerate}
\end{examples}

The notation $F_{\A^n}$, $F_{\PP^n}$, and $F_{(\PP^1)^n}$ is explained in Example \ref{example_charmortoric} below: The Kato fans described here turn out to be the Kato fans that are naturally associated to the toric varieties $\A^n$, $\PP^n$, and $(\PP^1)^n$. Note, in particular, that the underlying topological spaces of these Kato fans precisely correspond to the generic points of the torus orbits of these toric varieties (see Remark \ref{remark_torusorbits} above).  


\subsection{Cone complexes}\label{section_conecomplexes}
In \cite{Kato_toricsing} K. Kato introduced the notion of a Kato fan in order to algebraize much more geometric objects, so called \emph{rational polyhedral cone complexes} in the terminology of \cite[Section 2.1 Definition 5]{KKMSD_toroidal} (also see \cite[Section 2.1]{AbramovichCaporasoPayne_tropicalmoduli}).

\begin{definition}
A \emph{rational polyhedral cone complex} $\Sigma$ (or short: a \emph{cone complex}) consists of a topological space $\vert\Sigma\vert$ together with a collection of rational polyhedral cones $\sigma_\alpha$ and continuous maps $\phi_\alpha\mathrel{\mathop:}\sigma_\alpha\rightarrow \vert\Sigma\vert$ such that the following properties hold:
\begin{enumerate}[(i)]
\item The maps $\phi_\alpha$ are injective and induce a bijection 
\begin{equation*}
\bigsqcup_{\alpha}\mathring{\sigma}_\alpha \xrightarrow{\sim} \vert\Sigma\vert \ .
\end{equation*}
\item Given a proper face $\tau$ of $\sigma_\alpha$, then $\tau$ is also a member of the family $(\sigma_\alpha)$. 
\item A subset $A$ of $\vert\Sigma\vert$ is closed if and only if its preimages $\phi_\alpha^{-1}(A)$ are closed in $\sigma_\alpha$ for all $\alpha$.
\end{enumerate}
\end{definition}

Denote the category of cone complexes with piecewise $\Z$-linear morphisms by $\mathbf{RPCC}$. That is, a morphism $f\mathrel{\mathop:}\Sigma\rightarrow\Sigma'$ in this category is given by a continuous map $\vert f\vert\mathrel{\mathop:}\vert\Sigma\vert\rightarrow\vert\Sigma'\vert$ together with a family of morphisms $\sigma_\alpha\rightarrow \sigma'_\beta$ with $\beta=\beta(\alpha)$ such that the diagrams
\begin{equation*}\begin{CD}
\sigma_\alpha @>\phi_\alpha>>\vert\Sigma\vert\\
@VVV @VV\vert f\vert V\\
\sigma'_\beta @>\phi_\beta'>>\vert\Sigma'\vert
\end{CD}\end{equation*}
commute for all $\alpha$. 

\begin{proposition}\label{prop_conecomplex=fsfan}
There is an equivalence 
\begin{equation*}\begin{split}
\mathbf{Fans}&\xlongrightarrow{\sim}\mathbf{RPCC}\\
F&\longmapsto\Sigma_F 
\end{split}\end{equation*}
between the the category of Kato fans and the category of rational polyhedral cone complexes such that 
\begin{equation*}
\vert\Sigma_F\vert=F(\R_{\geq 0})=\Hom\big(\Spec\R_{\geq 0},F\big) \ .
\end{equation*}
\end{proposition}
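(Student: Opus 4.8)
The plan is to reduce everything to the affine case, where the two equivalences already established --- Proposition~\ref{prop_rpc=fsmon} between $\mathbf{tor-Mon}^{op}$ and $\mathbf{RPC}$, and Proposition~\ref{prop_afffans=shmon} between $\mathbf{sh-Mon}^{op}$ and $\mathbf{aff-Fans}$ --- do the work, and then to glue. So first suppose $F=\Spec P$ is a fine and saturated affine Kato fan. By Lemma~\ref{lemma_fsmondecomp} together with the convention $\mathcal{O}_F(F)=P/P^\ast$, the monoid $\overline{P}=P/P^\ast$ is sharp and toric, hence corresponds under Proposition~\ref{prop_rpc=fsmon} to a sharp rational polyhedral cone $(\sigma_P,N_P)$. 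I would then identify this cone with the point set $F(\R_{\geq 0})$: using the representability of $\Spec P$ exactly as in the proof of Proposition~\ref{prop_afffans=shmon}, together with the equality $\mathcal{O}_{\Spec\R_{\geq 0}}(\Spec\R_{\geq 0})=\R_{\geq 0}$ (which holds because $\R_{\geq 0}$ is sharp), one gets
\[
F(\R_{\geq 0})=\Hom(\Spec\R_{\geq 0},\Spec P)=\Hom(P,\R_{\geq 0})=\sigma_P
\]
as sets.

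Next I would describe the cone-complex structure on $F(\R_{\geq 0})$ intrinsically through the stalks. For a point $x=\mathfrak{p}\in F$ the stalk $\mathcal{O}_{F,x}=P_\mathfrak{p}/P_\mathfrak{p}^\ast$ is again sharp and toric, and restriction along $P\to\mathcal{O}_{F,x}$ identifies $\sigma_x:=\Hom(\mathcal{O}_{F,x},\R_{\geq 0})$ with the face $\{u\in\sigma_P : u\text{ vanishes on the face }P\setminus\mathfrak{p}\}$ of $\sigma_P$; its relative interior $\mathring{\sigma}_x$ consists of those $u$ with $u^{-1}(0)=P\setminus\mathfrak{p}$. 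Since every homomorphism $u\colon P\to\R_{\geq 0}$ has $u^{-1}(0)$ equal to the complement of a unique prime, this produces the disjoint decomposition $\sigma_P=\bigsqcup_{x\in F}\mathring{\sigma}_x$, which is axiom~(i). Sub-faces of $\sigma_x$ correspond to faces of $P$ containing $P\setminus\mathfrak{p}$, i.e. to generizations $x'$ of $x$, and the inclusion of the face agrees with $\phi_{x'}$; this is axiom~(ii). Endowing $F(\R_{\geq 0})$ with the weak topology determined by the maps $\phi_x\colon\sigma_x\to F(\R_{\geq 0})$ gives axiom~(iii), and on the affine piece this is just the usual topology of $\sigma_P$. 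For a general fine and saturated Kato fan I would cover the quasi-compact space $F$ by finitely many affine opens; since the cones $\sigma_x$, their faces, and the gluing maps depend only on the stalks $\mathcal{O}_{F,x}$, the local pictures glue to a well-defined finite cone complex $\Sigma_F$ with $\vert\Sigma_F\vert=F(\R_{\geq 0})$.

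It remains to promote this to an equivalence. A morphism $f\colon F\to G$ induces a map $F(\R_{\geq 0})\to G(\R_{\geq 0})$ by composition with $f$; on an affine piece coming from a homomorphism $Q\to P$ it is the map $\Hom(P,\R_{\geq 0})\to\Hom(Q,\R_{\geq 0})$, namely $\Sigma(f)$, which is a morphism of cones, so $f$ induces a morphism $\Sigma_F\to\Sigma_G$ in $\mathbf{RPCC}$, evidently functorially. For essential surjectivity I would start from a cone complex $\Sigma$, attach to each of its cones $(\sigma_\alpha,N_\alpha)$ the sharp toric monoid $\overline{S}_{\sigma_\alpha}$ and the affine fan $\Spec\overline{S}_{\sigma_\alpha}$ via the two equivalences, and glue these along the face morphisms of $\Sigma$ to obtain a fine and saturated Kato fan $F$ with $\Sigma_F\cong\Sigma$. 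For full faithfulness I would again reduce to the affine situation: a morphism in $\mathbf{RPCC}$ is precisely a continuous map together with a compatible family of cone morphisms, and under Propositions~\ref{prop_rpc=fsmon} and~\ref{prop_afffans=shmon} such a family corresponds to a compatible family of morphisms of affine fans, which glues to a unique morphism $F\to G$ inducing it.

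The identifications in the affine case are immediate from the earlier propositions, so the real content lies in the gluing. The main obstacle I anticipate is verifying that the two constructions are mutually inverse at the level of \emph{morphisms} and not merely objects: one must check that a morphism of cone complexes, which only remembers a continuous map of spaces plus linear data on each cone, already carries exactly the information needed to reconstruct the local monoid homomorphisms, and that these are automatically compatible on overlaps so as to glue into a single morphism of sharp monoidal spaces. Confirming that the weak topology of axiom~(iii) agrees with the gluing topology of the affine charts, and that the combinatorial face-matching assignment $\alpha\mapsto\beta(\alpha)$ is forced by the underlying continuous map, is where the bookkeeping is genuinely required.
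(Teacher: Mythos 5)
Your proposal is correct and follows essentially the same route as the paper: reduce to the affine case via Propositions \ref{prop_rpc=fsmon} and \ref{prop_afffans=shmon}, identify $F(\R_{\geq 0})$ over an affine open $\Spec P$ with $\Hom(P,\R_{\geq 0})$, recover faces from localizations (equivalently, stalks at generizations), impose the weak topology, and glue. The paper's own proof is just a terser version of this, leaving the verification of the cone-complex axioms and of full faithfulness and essential surjectivity implicit, so your additional detail is consistent with, and a legitimate expansion of, its argument.
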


Define the \emph{reduction map} $r\mathrel{\mathop:}\vert\Sigma_F\vert=F(\R_{\geq 0})\rightarrow F$ by sending a morphism $u\mathrel{\mathop:}\Spec\R_{\geq 0}\rightarrow F$ to the image $u(\R_{>0})$ of the maximal ideal $\R_{>0}$ in $\Spec\R_{\geq 0}$.

\begin{proof}[Proof of Proposition \ref{prop_conecomplex=fsfan}]
Let $F$ be a Kato fan. For an open affine subset $U=\Spec P$ in $F$, the preimage 
\begin{equation*}
\sigma_U=r^{-1}(U)=\Hom(P,\R_{\geq 0})
\end{equation*}
is a rational polyhedral cone in $\vert\Sigma_F\vert=F(\R_{\geq 0})$. Given an open affine subset $V=\Spec Q$ of $U$, we can assume that $Q=P_\mathfrak{p}$ for some prime ideal $\mathfrak{p}$ in $P$ and we have that 
\begin{equation*}
\sigma_V=r^{-1}(V)=\Hom(Q,\R_{\geq 0})
\end{equation*}
is a face of $\sigma_U$. This, in particular, allows us to endow $\vert\Sigma_F\vert$ with the weak topology. That is, a subset $A\subseteq\vert\Sigma_F\vert$ is closed, if and only if the $A\cap r^{-1}(U)\subseteq\Hom(P,\R_{\geq 0})$ is closed for all open affine subsets $U=\Spec P$ of $F$. These observations together with Proposition \ref{prop_rpc=fsmon} and Proposition \ref{prop_afffans=shmon} imply that $\mathbf{RPCC}$ and $\mathbf{Fans}$ are equivalent.
\end{proof}

\subsection{Canonical extensions}\label{section_extensions} Consider a Kato fan $F$. The results in \cite{Thuillier_toroidal} suggest to think of $\Sigma_F$ as the analogue of a non-Archimedean analytic space associated of $F$. In fact, the correct analogue of $X^\beth$ for a scheme $X$ locally of finite type over $k$ is not $\Sigma_F$, but rather its \emph{canonical compactification} $\Sigmabar_F$, as defined in \cite[Section 2.2]{AbramovichCaporasoPayne_tropicalmoduli}. 

\begin{definition}
Let $F$ be a Kato fan. The \emph{extended cone complex} associated to $F$ (also known as the \emph{canonical extension of $\Sigma_F$}) is the set $F(\Rbar_{\geq 0})$ of $\Rbar_{\geq 0}$-valued points on $F$, that is the set of morphisms $\Spec\Rbar_{\geq 0}\rightarrow F$. 
\end{definition}

Define the \emph{reduction map} 
\begin{equation*}
r\mathrel{\mathop:}\overline{\Sigma}_F\rightarrow F
\end{equation*} by sending a morphism $u\mathrel{\mathop:}\Spec\Rbar_{\geq 0}\rightarrow F$ onto the point $u(\Rbar_{>0})\in F$ and the \emph{structure map} 
\begin{equation*}
\rho\mathrel{\mathop:}\overline{\Sigma}_F\rightarrow F
\end{equation*} 
by sending a morphism $u\mathrel{\mathop:}\Spec\Rbar_{\geq 0}\rightarrow F$ to the point $u\big(\{\infty\}\big)\in F$. Note hereby that the reduction map $r\mathrel{\mathop:}\overline{\Sigma}_F\rightarrow F$ is a natural extension of the reduction map $r\mathrel{\mathop:}\Sigma_F\rightarrow F$ as defined right before the proof of Proposition \ref{prop_conecomplex=fsfan}. We observe that for an open affine subset $U=\Spec P$ of $F$ the preimage
\begin{equation*}
r^{-1}(U)=\Hom(P,\Rbar_{\geq 0})
\end{equation*}
is the canonical compactification $\sigmabar_U$ of the cone $\sigma_U=\Hom(P,\R_{\geq 0})$ as defined in \cite[Section 2]{Thuillier_toroidal}. Moreover, for an open affine subset $V=\Spec Q$ of $U$, the extended cone $\sigmabar_V=r^{-1}(V)$ is a face of $\sigmabar_U=r^{-1}(U)$ and so $\Sigmabar_F$ is the colimit of all $\sigmabar_U$ taken over all open affine subsets $U$ of $F$. This characterization of $\overline{\Sigma}_F$ allows us to endow it with the \emph{weak topology:} A subset $A\subseteq \overline{\Sigma}_F$ is closed if and only if $A\cap \sigmabar_U$ is closed for all open affine subsets $U=\Spec P$. 

\begin{proposition} The structure map $\rho\mathrel{\mathop:}\overline{\Sigma}_F\rightarrow F$ is continuous and the reduction map $r\mathrel{\mathop:}\overline{\Sigma}_F\rightarrow F$ is anti-continuous. 
\end{proposition}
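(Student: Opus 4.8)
The plan is to reduce to the affine case using the weak topology on $\Sigmabar_F$ and then to compute preimages of basic open sets by hand. First I would record that both maps carry each extended cone $\sigmabar_U = r^{-1}(U)$, for $U = \Spec P$ an open affine of $F$, back into $U$. For $r$ this is immediate from the definition, and for $\rho$ it holds because $\rho(u) = u^{-1}(\{\infty\})$ is contained in $r(u) = u^{-1}(\Rbar_{>0})$, hence is a generization of the point $r(u) \in U$; as open subsets of a Kato fan are stable under generization (each basic open $D(f) = \{\mathfrak{p} : f \notin \mathfrak{p}\}$ manifestly is), this forces $\rho(u) \in U$. Consequently $\rho^{-1}(W) \cap \sigmabar_U = (\rho\vert_{\sigmabar_U})^{-1}(W \cap U)$, and likewise for $r$, so by the defining property of the weak topology it suffices to prove continuity of $\rho$ and anti-continuity of $r$ in the affine case $F = \Spec P$.

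In that case $\Sigmabar_F = \sigmabar_P = \Hom(P, \Rbar_{\geq 0})$ carries the topology of pointwise convergence, for which every evaluation $u \mapsto u(f)$, with $f \in P$, into $\Rbar_{\geq 0} = [0,\infty]$ is continuous. Under the identification $r^{-1}(U) = \Hom(P,\Rbar_{\geq 0})$ recalled above, and using that the morphism induced by a homomorphism acts on primes by preimage, the two maps become $r(u) = \{p \in P : u(p) \neq 0\}$ and $\rho(u) = \{p \in P : u(p) = \infty\}$. I would then compute the preimages of a basic open $D(f)$: since $\rho(u) \in D(f)$ means $u(f) \neq \infty$ while $r(u) \in D(f)$ means $u(f) = 0$, one obtains $\rho^{-1}(D(f)) = \{u : u(f) \in [0,\infty)\}$ and $r^{-1}(D(f)) = \{u : u(f) = 0\}$.

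The crux is the asymmetry between $[0,\infty)$, which is open in $[0,\infty]$, and $\{0\}$, which is closed: the former makes $\rho^{-1}(D(f))$ open (so $\rho$ is continuous, preimages of a basis of opens being open), while the latter makes $r^{-1}(D(f))$ closed. Since $P$ is fine, $\Spec P$, and hence $F$, is a finite topological space; thus every open set is a finite union of sets $D(f)$, so $r^{-1}$ of an arbitrary open set is a finite union of closed sets and therefore closed. This is precisely the assertion that $r$ is anti-continuous (recall that anti-continuity means $r^{-1}$ of an open set is closed, equivalently $r^{-1}$ of a closed set is open). The one point demanding care is this open/closed dichotomy, namely whether the defining condition ``$f \notin \mathfrak{p}$'' of a basic open pulls back along $u$ to the closed condition $u(f) = 0$ or to the open condition $u(f) < \infty$; everything else — the reduction to the affine patches and the passage from basic to arbitrary opens via finiteness of $F$ — is routine.
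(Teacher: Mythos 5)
Your proof is correct and follows essentially the same route as the paper: reduce to the affine case $F=\Spec P$ and compute $\rho^{-1}(D(f))=\{u: u(f)\neq\infty\}$ and $r^{-1}(D(f))=\{u: u(f)=0\}$, reading off openness versus closedness from the topology of $\Rbar_{\geq 0}$. The paper's proof is just a terser version of yours; the details you supply (the generization argument showing $\rho(\sigmabar_U)\subseteq U$, and the finiteness of $\Spec P$ to pass from basic opens to arbitrary opens for anti-continuity) are exactly the routine steps it leaves implicit.
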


\begin{proof}
We only need to consider an affine Kato fan $F=\Spec P$. Given $f\in P$, we have 
\begin{equation*}
\rho^{-1}\big(D(f)\big)=\big\{u\in\Hom(P,\overline{\mathbb{R}}_{\geq 0})\big\vert u(f)\neq\infty\big\}
\end{equation*}
as well as
\begin{equation*}
r^{-1}\big(D(f)\big)=\big\{u\in\Hom(P,\overline{\mathbb{R}}_{\geq 0})\big\vert u(f)=0\big\}
\end{equation*}
and this implies the continuity of $\rho$ as well as the anti-continuity of $r$. 
\end{proof}

\begin{proposition}\label{prop_KatofanECCfunc}
A morphism $f\mathrel{\mathop:}F\rightarrow G$ of Kato fans induces a continuous map 
\begin{equation*}\begin{split}
\Sigmabar(f)\mathrel{\mathop:}\overline{\Sigma}_F&\longrightarrow\overline{\Sigma}_G\\ 
u&\longmapsto f\circ u 
\end{split}\end{equation*}
that naturally extends the piecewise $\Z$-linear map $\Sigma(f)\mathrel{\mathop:}\Sigma_F\rightarrow \Sigma_F$ and makes the diagrams 
\begin{equation*}\begin{CD}
\overline{\Sigma}_F @>\Sigmabar(f)>>\overline{\Sigma}_G\\
@V\rho VV @VV\rho V\\
F@>f >>G
\end{CD}
\qquad\qquad
\begin{CD}
\overline{\Sigma}_F @>\Sigmabar(f)>>\overline{\Sigma}_G\\
@Vr VV @VVr V\\
F@>f >>G
\end{CD}\end{equation*}
commute. The association $f\mapsto \Sigmabar(f)$ is functorial in $f$. 
\end{proposition}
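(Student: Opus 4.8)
The plan is to reduce everything to the affine situation, where $\Sigmabar(f)$ becomes precomposition with a homomorphism of monoids, and to extract continuity from the topology of pointwise convergence. First I would dispatch the formal parts. The assignment $u\mapsto f\circ u$ is well defined because the composite $\Spec\Rbar_{\geq 0}\xrightarrow{u}F\xrightarrow{f}G$ of morphisms of Kato fans is again a morphism, hence a point of $\Sigmabar_G=G(\Rbar_{\geq 0})$. Functoriality is immediate from associativity of composition: $\Sigmabar(\id_F)=\id$ and $\Sigmabar(g\circ f)(u)=(g\circ f)\circ u=g\circ(f\circ u)=\big(\Sigmabar(g)\circ\Sigmabar(f)\big)(u)$. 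The two diagrams commute on underlying points because $\rho$ and $r$ are defined by evaluating $u$ at the points $\{\infty\}$ and $\Rbar_{>0}$ of $\Spec\Rbar_{\geq 0}$; thus $\rho(f\circ u)=(f\circ u)(\{\infty\})=f\big(u(\{\infty\})\big)=f(\rho(u))$ and likewise $r(f\circ u)=f(r(u))$.

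The genuine content is continuity together with piecewise $\Z$-linearity. Since $P$ fine forces $\Spec P$ to be a finite poset, every fine and saturated Kato fan admits a finite affine cover; choose $F=\bigcup_i U_i$ with $U_i=\Spec P_i$ and $G=\bigcup_k V_k$ with $V_k=\Spec Q_k$. By the anti-continuity of $r$ each $\sigmabar_{U_i}=r^{-1}(U_i)$ is closed, and these closed sets cover $\Sigmabar_F$ because $r(u)\in U_i$ for some $i$; by the definition of the weak topology it therefore suffices to check that each restriction $\Sigmabar(f)\vert_{\sigmabar_{U_i}}$ is continuous. Using the commutativity $r\circ\Sigmabar(f)=f\circ r$ just established, the preimages $\Sigmabar(f)^{-1}(\sigmabar_{V_k})=r^{-1}\big(f^{-1}(V_k)\big)$ are again closed and cover $\sigmabar_{U_i}$, so it is enough to prove continuity of each map $r^{-1}(f^{-1}(V_k))\cap\sigmabar_{U_i}\to\sigmabar_{V_k}$.

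On such a piece I would exhibit $\Sigmabar(f)$ as precomposition with a monoid homomorphism. Given $v\in\sigmabar_{U_i}$ with $\mathfrak{p}=r(v)\in U_i\cap f^{-1}(V_k)$, this open subset of $\Spec P_i$ contains a principal open $D(g)\ni\mathfrak{p}$ with $f(D(g))\subseteq V_k$, and $g\notin\mathfrak{p}$ means exactly $v(g)=0$. Any $v$ with $r(v)\in D(g)$ then factors through $\mathcal{O}_F(D(g))=(P_i)_g/(P_i)_g^\ast$, while $f\vert_{D(g)}$ corresponds under Proposition \ref{prop_afffans=shmon} to a homomorphism $\theta\colon Q_k=\mathcal{O}_G(V_k)\to\mathcal{O}_F(D(g))$; hence on the closed neighbourhood $r^{-1}(D(g))\cap\sigmabar_{U_i}$ the map $\Sigmabar(f)$ is simply $v\mapsto v\circ\theta$. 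Finiteness of $\Spec P_i$ lets finitely many such $D(g)$ cover $U_i$, giving a finite closed cover of $\sigmabar_{U_i}$ on each member of which $\Sigmabar(f)$ is precomposition with a fixed $\theta$. Continuity of $v\mapsto v\circ\theta$ is then clear, since $\Hom(Q_k,\Rbar_{\geq 0})$ carries the topology of pointwise convergence and each evaluation $v\mapsto v(\theta(q))$ is continuous; and piecewise $\Z$-linearity follows because on the finite parts $v\circ\theta$ is the restriction of the $\Z$-linear map $\Hom(\theta^{gp},\Z)\colon N_{P_i}\to N_{Q_k}$ supplied by the functor $\Sigma$ of Proposition \ref{prop_rpc=fsmon}.

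The hard part will be the topological bookkeeping in this reduction, since the weak topology on the colimit $\Sigmabar_G$ blocks any naive preimage argument. The two devices that make it go through are the anti-continuity of $r$, which turns all the relevant preimages into closed sets, and the finiteness of $\Spec P$ for fine $P$, which collapses every cover to a finite closed cover so that continuity may be checked piecewise. Once the problem is localized in this way, identifying $\Sigmabar(f)$ with precomposition by a monoid homomorphism is routine.
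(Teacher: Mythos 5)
Your proof is correct and takes essentially the same route as the paper's: after the formal verifications of well-definedness, functoriality, and the two commuting squares, you identify $\Sigmabar(f)$ on affine pieces with precomposition by a monoid homomorphism, from which continuity and piecewise $\Z$-linearity follow. The paper compresses the remaining topological bookkeeping into ``it is now easy to show that the above assertions hold''; your version simply supplies those details (closedness of $r^{-1}(U)$ via anti-continuity of $r$, finite closed covers coming from finiteness of $\Spec P$, and the pointwise-convergence topology on $\Hom(Q,\Rbar_{\geq 0})$).
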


Note that on open affine subsets $U=\Spec P$ and $V=\Spec Q$ of $F$ and $G$ respectively with $f(U)\subseteq V$ the map $\Sigmabar(f)$ is given by 
\begin{equation*}\begin{split}
\Hom(P,\Rbar_{\geq 0})&\longrightarrow \Hom(Q,\Rbar_{\geq 0})\\
u&\longmapsto u\circ f^\# \ ,
\end{split}\end{equation*}
where $f^\#$ is a homomorphism $Q\rightarrow P$ inducing $f$. 

\begin{proof}
We only need to show the commutativity of the two diagrams. To achieve this we observe that for $u\in \Sigmabar_F=\Hom(\Spec\Rbar_{\geq 0})$ we have 
\begin{equation*}\begin{split}
\big(\rho_G\circ\Sigmabar(f)\big)(u)&=\rho_G (f\circ u)=\\
&=(f\circ u)(\{\infty\})=(f\circ \rho_F) (u)
\end{split}\end{equation*}
as well as
\begin{equation*}\begin{split}
\big(r_G\circ\Sigmabar(f)\big)(u)&=r_G (f\circ u)=\\
&=(f\circ u)(\Rbar_{>0})=(f\circ r_F) (u) \ .
\end{split}\end{equation*}
\end{proof} 

Recall from Section \ref{section_monoidalspaces} that a morphism $f\mathrel{\mathop:}F\rightarrow G$ of Kato fans is \emph{strict}, if the natural morphism $f^{-1}\mathcal{O}_G\rightarrow\mathcal{O}_F$ is an isomorphism of monoid sheaves on $F$ and note that this is equivalent to $f$ inducing isomorphisms $f_x^\dagger\mathrel{\mathop:}\mathcal{O}_{G,f(x)}\rightarrow\mathcal{O}_{F,x}$ for all $x\in F$.

\begin{corollary}
A morphism $f\mathrel{\mathop:}F\rightarrow G$ of Kato fans is strict if and only if the induced morphism $\Sigma(f)\mathrel{\mathop:}\Sigma_F\rightarrow\Sigma_G$ maps every cone in $\Sigma_F$ isomorphically onto a cone in $\Sigma_G$. 
\end{corollary}

\begin{proof} 
Denote for $x\in F$ and $y=f(x)\in G$ the unique minimal affine open subsets in $F$ and $G$ containing $x$ and $y$ respectively as their maximal points by $U=\Spec P$ and $V=\Spec Q$. A morphism $f\mathrel{\mathop:}F\rightarrow G$ is strict, if and only if it induces an isomorphism $U\simeq V$ for all points $x\in F$. By Proposition \ref{prop_KatofanECCfunc} this is the case if and only $f$ also induces isomorphisms 
\begin{equation*}
\Sigma_U=\Hom(P,\R_{\geq 0})\simeq\Sigma_V=\Hom(Q,\R_{\geq 0})
\end{equation*}
of rational polyhedral cones.
\end{proof}

\subsection{Stratification of extended cone complexes}
Let $F$ be a Kato fan. The collection $\rho^{-1}(x)$ for $x\in F$ defines a stratification of $\overline{\Sigma}_F$ by locally closed subsets. If $F$ is irreducible, the unique open stratum $\rho^{-1}(\eta)$, where $\eta$ is the generic point of $F$, is equal to the set of morphisms $\Spec\R_{\geq 0}\rightarrow F$, i.e. the cone complex $\Sigma_F$. In this sense we can think of $\Sigmabar_F$ as the \emph{canonical compactification} of $\Sigma_F$.

If $F=\Spec P$ is affine, we can formally describe this stratification following \cite[Section 3]{Payne_anallimittrop}  or \cite[Proposition 3.4]{Rabinoff_newtonpolygon}. Write $\sigma$ for the cone $\Hom(P,\mathbb{R}_{\geq 0})$ as well as $N_\R(\sigma)=\Hom(P,\Rbar)$. 

\begin{proposition}[\cite{Rabinoff_newtonpolygon} Proposition 3.4]\label{prop_troptoricstratification} There is a natural stratification
\begin{equation*}
\bigsqcup_{\tau\prec\sigma} N_\R/\Spec\tau \xlongrightarrow{\sim} N_\R(\sigma)
\end{equation*}
of $N_\R(\sigma)$ by locally closed subsets homeomorphic to $N_\R/\Span\tau$, given by associating to $[u]\in N_\R/\Span\tau$, represented by $u\in N_\R=\Hom(P^{gp},\R)$, the homomorphism
\begin{equation*}
p\longmapsto\left\{
  \begin{array}{l l}
   \langle u,p\rangle & \quad \text{if } p\in \tau^{\bot}\cap P\\
   \infty & \quad \text{else}\\
  \end{array} \right .
\end{equation*}
in $N_\R(\sigma)=\Hom(P,\Rbar)$. 
\end{proposition}

We may identify the stratification of $\overline{\Sigma}_F=\Hom(P,\overline{\mathbb{R}}_{\geq 0})\subseteq\Hom(P,\Rbar)$ given by the preimages $\rho^{-1}(x)$ of the points $x$ in $F$ with the stratification 
\begin{equation*}
\bigsqcup_{\tau\prec\sigma} \sigma/\tau \xlongrightarrow{\sim} \Sigmabar_F
\end{equation*}
induced from Proposition \ref{prop_troptoricstratification}, where we write $\sigma/\tau$ for the image of $\sigma$ in $N_\R/\Span\tau$. Here $\sigma/\tau$ is identified with a locally closed subset of $\overline{\Sigma}_F$ by sending an element in $\sigma/\tau$ represented by $u\in\sigma$ to the homomorphism 
\begin{equation*}
p\longmapsto\left\{
  \begin{array}{l l}
   \langle u,p\rangle & \quad \text{if } p\in \tau^{\bot}\cap P\\
   \infty & \quad \text{else}\\
  \end{array} \right .
\end{equation*}
in $\Hom(P,\overline{\mathbb{R}}_{\geq 0})$.

\begin{remark}
If we think of $N_\R(\sigma)=\Hom(P,\Rbar)$ as the analogue of $X^{an}$ for an affine scheme $X=\Spec A$  of finite type over $k$, then $\Sigmabar_F=\Hom(P,\Rbar_{\geq 0})$ is the closed subset corresponding to $X^\beth\subseteq X^{an}$, the set of bounded seminorms on $A$ (see Section \ref{section_analytification} below).
\end{remark}
 
\begin{examples}\begin{enumerate}[(i)]

\item If $F=F_{\A^1}=\Spec \N$, then $\overline{\Sigma}_F=\overline{\Sigma}_{\A^1}=\Rbar_{\geq 0}$. The strata are given by $\R_{\geq 0}$ and $\{\infty\}$. 

\begin{center}\begin{tikzpicture}
\fill (0,0) circle (0.12cm);
\fill (2,0) circle (0.05cm);

\draw (0,0) -- (1,0);
\draw[dotted] (1,0) -- (2,0);

\end{tikzpicture}\end{center}

\item Suppose $F=F_{\A^2}=\Spec \N^2$. Then $\overline{\Sigma}_F=\overline{\Sigma}_{\A^2}=\Rbar_{\geq 0}^2$ and the strata are given by $\R_{\geq 0}^2$, $\R_{\geq 0}\times\{\infty\}$, $\{\infty\}\times\R_{\geq 0}$, and $\{(\infty,\infty)\}$.
\begin{center}\begin{tikzpicture}

\draw [black!20!white, fill=black!20!white] (0,0) rectangle (1,1);

\fill (2,2) circle (0.05 cm);
\fill (0,0) circle (0.20 cm);
\fill (0,2) circle (0.12 cm);
\fill (2,0) circle (0.12 cm);

\draw (0,0) -- (1,0);
\draw (0,0) -- (0,1);
\draw (2,0) -- (2,1);
\draw (0,2) -- (1,2);
 
\draw [dotted] (0,1) -- (0,2);
\draw [dotted] (1,0) -- (2,0);
\draw [dotted] (2,1) -- (2,2);
\draw [dotted] (1,2) -- (2,2);

\end{tikzpicture}\end{center}

\item Suppose $P$ is the monoid generated by $p,q,r$ subject to the relation $p+r=2q$. Then the extended cone complex $\overline{\Sigma}_F$ associated to $F=\Spec P$ consists of the cone $\R_{\geq 0}(1,2)+\R_{\geq 0}(1,0)$, two copies of $\R_{\geq 0}$, and the point $\{(\infty,\infty)\}$ at infinity. \begin{center}\begin{tikzpicture}

\draw [black!20!white, fill=black!20!white] (0,0) -- (1,2) -- (2,1.5) -- (2,0) -- (0,0);

\fill (3,2) circle (0.05 cm);
\fill (0,0) circle (0.20 cm);
\fill (1.5,3) circle (0.12 cm);
\fill (3,0) circle (0.12 cm);

\draw (0,0) -- (2,0);
\draw (0,0) -- (1,2);
\draw (1.5,3) -- (2.25,2.5);
\draw (3,0) -- (3,1);
 
\draw [dotted] (1,2) -- (1.5,3);
\draw [dotted] (2,0) -- (3,0);
\draw [dotted] (3,1) -- (3,2);
\draw [dotted] (2.25,2.5) -- (3,2);

\end{tikzpicture}\end{center}
\end{enumerate}\end{examples}

\begin{examples}\begin{enumerate}[(i)]
\item Let $F=F_{\PP^1}$. Then the strata of the extended cone complex are given by $\R$, $\{\infty\}$, and $\{-\infty\}$.
\begin{center}\begin{tikzpicture}
\fill (0,0) circle (0.12cm);
\fill (2,0) circle (0.05cm);
\fill (-2,0) circle (0.05cm);

\draw (0,0) -- (1,0);
\draw (0,0) -- (-1,0);

\draw[dotted] (1,0) -- (2,0);
\draw[dotted] (-1,0) -- (-2,0);

\end{tikzpicture}\end{center}

\item If $F=F_{\PP^2}$, its associated extended cone complex $\overline{\Sigma}_F=\overline{\Sigma}_{\PP^2}$ consists of the open stratum $\R^2$, three copies of $\R$ at infinity and three points $\{\infty\}$ at infinity. 

\begin{center}\begin{tikzpicture}
\draw [black!20!white, fill=black!20!white] (1,1) -- (-2,1) -- (1,-2) -- (1,1);

\fill (2,2) circle (0.05 cm);
\fill (0,0) circle (0.20 cm);
\fill (0,2) circle (0.12 cm);
\fill (-1,-1) circle (0.12 cm);
\fill (-4,2) circle (0.05 cm);
\fill (2,-4) circle (0.05 cm);
\fill (2,0) circle (0.12 cm);

\draw (0,0) -- (1,0);
\draw (0,0) -- (0,1);
\draw (2,0) -- (2,1);
\draw (0,2) -- (1,2);
\draw (0,0) -- (-0.5,-0.5);
\draw (0,2) -- (-3,2);
\draw (2,0) -- (2,-3);
\draw (-1,-1) -- (1,-3);
\draw (-1,-1) -- (-3,1);

\draw [dotted] (0,1) -- (0,2);
\draw [dotted] (1,0) -- (2,0);
\draw [dotted] (2,1) -- (2,2);
\draw [dotted] (1,2) -- (2,2);
\draw [dotted] (-0.5,-0.5) -- (-1,-1);
\draw [dotted] (-3,2) -- (-4,2);
\draw [dotted] (-3,1) -- (-4,2);
\draw [dotted] (1,-3) -- (2,-4);
\draw [dotted] (2,-3) -- (2,-4);

\end{tikzpicture}\end{center}

\item If $F=F_{\PP^1\times\PP^1}$, its associated extended cone complex $\overline{\Sigma}_F=\overline{\Sigma}_{\PP^1\times\PP^1}$ can be visualized as follows. 

\begin{center}\begin{tikzpicture}

\draw [black!20!white, fill=black!20!white] (-1,-1) rectangle (1,1);

\fill (2,2) circle (0.05 cm);
\fill (0,0) circle (0.20 cm);
\fill (0,2) circle (0.12 cm);
\fill (2,0) circle (0.12 cm);
\fill (-2,-2) circle (0.05 cm);
\fill (0,-2) circle (0.12 cm);
\fill (-2,0) circle (0.12 cm);
\fill (2,-2) circle (0.05 cm);
\fill (-2,2) circle (0.05 cm);

\draw (0,0) -- (1,0);
\draw (0,0) -- (0,1);
\draw (2,0) -- (2,1);
\draw (0,2) -- (1,2);
\draw (0,0) -- (-1,0);
\draw (0,0) -- (0,-1);
\draw (-2,0) -- (-2,-1);
\draw (0,-2) -- (-1,-2);
\draw (-2,0) -- (-2,1);
\draw (0,-2) -- (1,-2);
\draw (2,0) -- (2,-1);
\draw (0,2) -- (-1,2);
 
\draw [dotted] (0,1) -- (0,2);
\draw [dotted] (1,0) -- (2,0);
\draw [dotted] (2,1) -- (2,2);
\draw [dotted] (1,2) -- (2,2);
\draw [dotted] (0,-1) -- (0,-2);
\draw [dotted] (-1,0) -- (-2,0);
\draw [dotted] (-2,-1) -- (-2,-2);
\draw [dotted] (-1,-2) -- (-2,-2);
\draw [dotted] (-2,1) -- (-2,2);
\draw [dotted] (-1,2) -- (-2,2);
\draw [dotted] (2,-1) -- (2,-2);
\draw [dotted] (1,-2) -- (2,-2);

\end{tikzpicture}\end{center}
The strata of $\overline{\Sigma}_{\PP^1\times\PP^1}$ are given by $\R^2$, $\{\infty\}\times \R$, $\R\times\{\infty\}$, $\{-\infty\}\times \R$, $\R\times\{-\infty\}$, $\big\{(\infty,\infty)\big\}$, $\big\{(\infty,-\infty)\big\}$, $\big\{(-\infty,\infty)\big\}$, and $\big\{(-\infty,-\infty)\big\}$.
\end{enumerate}\end{examples}


\subsection{Generalized cone complexes}

In \cite{AbramovichCaporasoPayne_tropicalmoduli} the authors develop the notion of a \emph{generalized cone complex} in order to describe the combinatorial structure of a toroidal embedding that has self-intersection. Recall that a \emph{face morphism} $\tau\rightarrow\sigma$ is a morphism of rational polyhedral cones that induces an isomorphism onto a face of $\sigma$. This face need not be a proper face of $\sigma$; so, in particular, automorphisms of cones are allowed. 

\begin{definition}[\cite{AbramovichCaporasoPayne_tropicalmoduli} Section 2.6]
A \emph{generalized cone complex} $\Sigma$ is a topological space $\vert\Sigma\vert$ together with a presentation as a colimit of a diagram of face morphisms in $\mathbf{RPC}$ such that 
\begin{itemize}
\item for a proper face $\tau$ of a cone $\sigma$ in $\Sigma$ the proper face morphism $\tau\hookrightarrow\sigma$ is also in $\Sigma$, and
\item for every automorphism of a cone $\sigma$ in $\Sigma$ that leaves a proper face $\tau$ of $\sigma$ invariant the induced automorphism of $\tau$ is also in $\Sigma$. 
\end{itemize}
\end{definition}

\begin{example}\label{example_conecomplex=generalized}
Let $F$ be a Kato fan. Then the cone complex $\Sigma_F=F(\R_{\geq 0})$ is a generalized cone complex, since we may take the diagram of all $\sigma_U=\Hom(P,\R_{\geq 0})$ taken over open affine subsets $U=\Spec P$ of $F$ connected by (proper) face morphisms $\sigma_V\hookrightarrow \sigma_U$ whenever $V=\Spec Q$ is an open affine subset of $U$. 
\end{example}

\begin{example}\label{example_finitegroupquotients}
Let $G$ be a finite group acting on a cone $\sigma$ by automorphisms. Then the quotient $\sigma/G$ is a generalized cone complex (see Figure \ref{figure_R2modZ2}).
\end{example}

A morphism $\Sigma\rightarrow\Sigma'$ of generalized cone complexes is a continuous map $\vert\Sigma\vert\rightarrow\vert\Sigma'\vert$ such that for every $\sigma$ in $\Sigma$, there is a cone $\sigma'$ in $\Sigma'$  such that the composition $\sigma\rightarrow\Sigma\rightarrow\Sigma'$ factors through a morphism $\sigma\rightarrow\sigma'$ of cones. 

Of course, we may form the canonical extension of a generalized cone complex $\Sigma$ by taking the colimit of the $\sigmabar_\alpha$ instead of the $\sigma_\alpha$ (see Figure \ref{figure_R2modZ2}). In this case a morphism $f\mathrel{\mathop:}\Sigma\rightarrow\Sigma'$ of generalized cone complexes canonically extends to a continuous map $\overline{f}\mathrel{\mathop:}\Sigmabar\rightarrow\Sigmabar'$. 

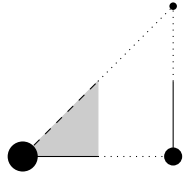
\begin{figure}[H]
\begin{center}\begin{tikzpicture}

\draw [black!20!white, fill=black!20!white] (0,0) -- (1,1) -- (1,0) -- (0,0);

\fill (2,2) circle (0.05 cm);
\fill (0,0) circle (0.20 cm);
\fill (2,0) circle (0.12 cm);

\draw (0,0) -- (1,0);
\draw[dashed] (0,0) -- (1,1);
\draw (2,0) -- (2,1);
 
\draw [dotted] (1,1) -- (2,2);
\draw [dotted] (1,0) -- (2,0);
\draw [dotted] (2,1) -- (2,2);

\end{tikzpicture}\end{center} \caption{The quotient of $\Rbar_{\geq 0}^2$ by the $\Z_2$-operation $(x,y)\mapsto (y,x)$ is a generalized extended cone complex by Example \ref{example_finitegroupquotients}.} \label{figure_R2modZ2}
\end{figure}



\section{Logarithmic structures and Kato fans}\label{section_logstr}

The main reference for logarithmic geometry is \cite{Kato_logstr}. We also refer the reader to \cite[Section 2-4]{Kato_logdef} and \cite[Section 2 and 3]{Abramovichetal_logmoduli} for short accounts of this theory and to \cite[Section 7]{GabberRomero_foundationsalmostring} for a treatment in full generality. 

\subsection{Logarithmic structures and charts}\label{section_logstructures}

One of the main objectives of logarithmic geometry in the sense of K. Kato \cite{Kato_logstr} is to enlarge the category of schemes in a way that allows us to generalize many of the nice combinatorial constructions that are naturally associated to toric varieties to more general schemes. The geometry of a toric variety $X$ with big torus $T$ is governed by the $T$-invariant divisors. If $X=\Spec k[P]$ is affine, the monoid of torus-invariant Cartier divisors is given by $P/P^\ast$. It is exactly this observation that motivates the following definition.

\begin{definition} Let $X$ be a scheme and denote by $X_\tau$ either the associated Zariski site $X_{Zar}$ or the \'etale site $X_{et}$.
\begin{enumerate}[(i)]
\item A \emph{pre-logarithmic structure} on $X_\tau$ is a pair $(M,\rho)$ consisting of a sheaf of monoids $M$ on $X_\tau$ and a morphism $\rho\mathrel{\mathop:}M\rightarrow\mathcal{O}_X$ of sheaves of monoids on $X$.
\item A pre-logarithmic structure $(M,\rho)$ is said to be a \emph{logarithmic structure}, if $\rho$ induces an isomorphism $\rho^{-1}\mathcal{O}_X^\ast\simeq\mathcal{O}_X^\ast$.
\end{enumerate}
\end{definition}

The triple $(X,M,\rho)$ is called a \emph{logarithmic scheme}. If the logarithmic structure is defined on $X_{Zar}$ (or on $X_{et}$), we say $(X,M,\rho)$ is a \emph{Zariski (resp. \'etale) logarithmic scheme}. It is very common to conveniently suppress the reference to $\rho$ or $(M,\rho)$ from this notation and to denote a logarithmic scheme just by $(X,M)$ or $X$ respectively. In the latter case the logarithmic structure will be written as $(M_X,\rho_X)$.

Denote by $\pi\mathrel{\mathop:}X_{et}\rightarrow X_{Zar}$ the natural morphisms of sites. As explained in \cite[Appendix A]{Olsson_loggeometryalgstacks} to every Zariski logarithmic structure $(M,\rho)$ we can associate its pullback $\pi^{\ast}(M,\rho)$ to $X_{et}$ and restricting an \'etale logarithmic structure $(M,\rho)$ from $X_{et}$ to $X_{Zar}$ defines an adjoint $\pi_\ast$ to $\pi^\ast$. By \cite[Theorem A.1]{Olsson_loggeometryalgstacks} the functor $\pi^\ast$ induces an equivalence between the category of Zariski logarithmic structures on $X_{Zar}$ and the category of \'etale logarithmic structures on $X_{et}$ for which the adjunction morphism $\pi^\ast\pi_\ast (M,\rho)\rightarrow (M,\rho)$ is an isomorphism. From now on we are going to identify a Zariski logarithmic structure $(M,\rho)$ on $X_{Zar}$ with its pullback to $X_{et}$.

\begin{example}[Divisorial logarithmic structures]\label{example_divlog}
Let $D$ be a divisor on a normal scheme $X$. Setting 
\begin{equation*}
M_D=\big\{f\in\mathcal{O}_X\big\vert f\vert_{X-D}\in\mathcal{O}_X^\ast\big\}
\end{equation*}
defines a logarithmic structure on $X$. 
\end{example}

\begin{example}[Toric varieties]
Let $X$ be a $T$-toric variety defined by a rational polyhedral fan $\Delta$ in the real vector space $N_\R=N\otimes \R$ associated to the cocharacter lattice $N$ of $T$. The divisorial logarithmic structure associated to the toric boundary $X-T$ is given by 
\begin{equation*}\begin{split}
k^\ast\oplus S_\sigma&\longrightarrow k[S_\sigma]\\
(a,s)&\longmapsto a\chi^s
\end{split}\end{equation*}
on $T$-invariant open affine subset $U_\sigma=\Spec K[S_\sigma]$ for a cone $\sigma$ in $\Delta$. 
\end{example}

To any pre-logarithmic structure $(M,\rho)$ on $X$ we can canonically associate a logarithmic structure $M^a$ together with a morphism $M\rightarrow M^a$ that is adjoint to the natural forgetful functor from the category of logarithmic structures to the category of pre-logarithmic structures on $X$. The sheaf $M^a$ is defined to be the pushout of 
\begin{equation*}\begin{CD}
\alpha^{-1}(\mathcal{O}_X^\ast)@>\subseteq>> M\\
@V\rho VV \\
\mathcal{O}_X^\ast
\end{CD}\end{equation*}
in the category of sheaves of monoids on $X$ with the induced morphism $M^a\rightarrow\mathcal{O}_X$. 

Given a morphism $f\mathrel{\mathop:}Y\rightarrow X$ of schemes, the \emph{inverse image} $f^\ast M$ of $M$ is defined to be the logarithmic structure associated to $f^{-1}M\rightarrow f^{-1}\mathcal{O}_X\rightarrow\mathcal{O}_Y$.

\begin{definition} \begin{enumerate}[(i)] 
\item Let $M$ and $M'$ be pre-logarithmic structures on $X$. A \emph{morphism of pre-logarithmic structures} $(M,\rho)\rightarrow (M',\rho')$ is a morphism $\gamma\mathrel{\mathop:}M\rightarrow M'$ of monoid sheaves such that $\rho'\circ \gamma=\rho$. A \emph{morphism of logarithmic structures} on $X$ is a morphism of pre-logarithmic structures. 
\item Let $X$ and $Y$ be logarithmic schemes. A \emph{morphism of logarithmic schemes} $f\mathrel{\mathop:}X\rightarrow Y$ consists of a morphism $f\mathrel{\mathop:}X\rightarrow Y$ of schemes together with a morphism $f^\flat\mathrel{\mathop:}f^\ast M_Y\rightarrow M_X$ of logarithmic structures on $X$. 
\end{enumerate}
\end{definition}

\begin{definition}
A \emph{chart} of a logarithmic structure $(M_X,\rho_X)$ on $X$ is given by a monoid $P$ and a morphism $\alpha\mathrel{\mathop:}P_X\rightarrow \calO_X$ such that $M_X$ is isomorphic to the logarithmic structure associated to $\alpha\mathrel{\mathop:}P_X\rightarrow \mathcal{O}_X$, where $P_X$ denotes the constant sheaf defined by $P$ on $X_\tau$.
\end{definition}

A morphism $f\mathrel{\mathop:}X\rightarrow Y$ of logarithmic schemes is said to be \emph{strict}, if $f^\flat\mathrel{\mathop:}f^\ast M_Y\rightarrow M_X$ is an isomorphism. A morphism $P_X\rightarrow \calO_X$ defines a chart if and only if the induced morphism $X\rightarrow\Spec\Z[P]$ is strict. 

A logarithmic scheme $X$ is said to be \emph{quasi-coherent}, if $X$ admits a covering by \'etale neighborhoods $U_i$ of $X$ such that for every $U_i$ there is a monoid $P_i$ and a chart $\beta_i\mathrel{\mathop:}(P_i)_{U_i}\rightarrow M_{U_i}$. If all $P_i$ can be chosen to be finitely generated (fine, saturated, etc.), we say the logarithmic structure $M$ is \emph{coherent (fine, saturated, etc.)}. 

Throughout this article, the term \emph{logarithmic scheme} will always mean a fine and saturated logarithmic scheme. Given two morphisms $X\rightarrow Z$ and $Y\rightarrow Z$ of logarithmic schemes, the fiber product  exists in the category of fine and saturated logarithmic schemes. If either (or both) of the morphisms are strict, then it is given  by endowing the scheme-theoretic fiber product $X\times_ZY$ with the logarithmic structure associated to $\pr_X^\#M_X\oplus_{M_Z}\pr_Y^\# M_Y$.

\subsection{Logarithmic schemes without monodromy}\label{section_monodromy}
The \emph{characteristic sheaf} of a logarithmic structure $(M_X,\rho_X)$ on $X$ is defined to be the sheaf $\overline{M}_X=M_X/M_X^\ast\simeq M_X/\mathcal{O}_X^\ast$ on $X$. We say a Zariski logarithmic scheme $X$ is \emph{small}, if the closed locus of points $x\in X$ where the restriction map
\begin{equation*}
\Gamma(X,\Mbar_X)\xlongrightarrow{\sim}\Mbar_{X,x}
\end{equation*}
is an isomorphism is non-empty and connected. 

\begin{definition}
A Zariski logarithmic scheme $X$ is said to have \emph{no monodromy}, if there is a strict morphism $(X,\Mbar_X)\rightarrow F$ into a Kato fan $F$.
\end{definition}

For a small Zariski logarithmic scheme $X$ the isomorphism $P_X=\Mbar_{X,x}\rightarrow\Gamma(X,\Mbar_X)$ induces a morphism
\begin{equation*}
\phi_X\mathrel{\mathop:}(X,\Mbar_X)\longrightarrow\Spec P_X
\end{equation*}
of sharp monoidal spaces that is strict, since $P_X\rightarrow \Mbar_X$ locally lifts to a chart of $M_X$ by \cite[Lemma (1.6)]{Kato_toricsing}. So a small logarithmic scheme automatically has no monodromy. 

\begin{proposition}\label{prop_charmor}
Suppose that $X$ is a Zariski logarithmic scheme without monodromy. There is a strict morphism $\phi_X\mathrel{\mathop:}(X,\overline{M}_X) \rightarrow F_X$ into a Kato fan $F_X$ that is initial among such morphisms.
\end{proposition}

In other words, given another strict morphism $\phi\mathrel{\mathop:}(X, \overline{M}_X)\rightarrow F$ into a Kato fan $F$, there is a unique strict morphism $F_X\rightarrow F$ making the diagram
\begin{center}\begin{tikzpicture}
  \matrix (m) [matrix of math nodes,row sep=1em,column sep=2em,minimum width=2em]
  {  
  (X,\overline{M}_X)& &F \\ 
  &  F_X &\\
  };
  \path[-stealth]
    (m-1-1) edge node [above] {$\phi$} (m-1-3)
    (m-1-1) edge node [below left] {$\phi_X$} (m-2-2)
    (m-2-2) edge node [right] {} (m-1-3);
\end{tikzpicture}\end{center}
commute. In a slight abuse of notation we also denote the composition $(X,\calObar_X)\rightarrow(X,\Mbar_X)\rightarrow F_X$ by $\phi_X$ and refer to it as the \emph{characteristic morphism} of $X$. 

\begin{proof}[Proof of Proposition \ref{prop_charmor}]
Suppose first that $X$ is small. Given a strict morphism $\phi\mathrel{\mathop:}(X,\overline{M}_X)\rightarrow F$ into a Kato fan $F$, the smallest open subset containing $\phi(x)$ as its unique closed point is isomorphic to $\Spec P_x$ therefore implying the universal property as above.

Now consider a general Zariski logarithmic scheme without monodromy. Choose a cover by small open subsets $U_i$. The intersections $U_{ij}=U_i\cap U_j$ may not be small, but we can again choose an open cover of $\bigsqcup_{ij}U_{ij}$ by small logarithmic schemes $V_k$. For every open $V_k\subseteq U_i$ the Kato fan $F_{V_k}$ is an open affine subset of $F_{U_i}$. Since there is a strict morphism $(X,\Mbar_X)\rightarrow F$ into some Kato fan, the diagram of $F_{V_k}$ and $F_{U_i}$ glues to give a Kato fan $F_X$, which has the desired universal property, since it is a colimit of all $U_i$ and $V_k$.
\end{proof}

Not every Zariski logarithmic scheme is without monodromy, as shown by the following Example \ref{example_GrossSiebertsexample} that has originally appeared in \cite[Appendix B]{GrossSiebert_logGromovWitten}.

\begin{example}[\cite{GrossSiebert_logGromovWitten} Example B.1]\label{example_GrossSiebertsexample}
Let $X$ be a logarithmic scheme whose underlying scheme is a union $C_1\cup C_2$ of two copies of $\PP^1$ meeting each other in two nodes $p$ and $q$. Set $U(p)=X-\{q\}$ and $U(q)=X-\{p\}$, and denote the two connected components of $U(p)\cap U(q)=X-\{p,q\}$ by $V_1$ and $V_2$ respectively. We define a logarithmic structure on $X$ as follows:
\begin{itemize}
\item On $U(p)=\Spec k[x_1,x_2]/(x_1x_2)$ we take the logarithmic structure $M_{U(p)}$ associated to the chart $\N^4\rightarrow k[x_1,x_2]/(x_1x_2)$ that is given by $(a_1,a_2,a_3,a_4)\mapsto x_1^{a_1}x_2^{a_2}$
\item Similarly, on $U(q)$ we consider the logarithmic structure $M_{U(q)}$ associated to the chart $\N^4\rightarrow k[y_1,y_2]/(y_1y_2)$ given by $(b_1,b_2,b_3,b_4)\mapsto y_1^{b_1}y_2^{b_2}$. 
\item Finally, on $V_1$ we glue the logarithmic structures along the identification $\N^3\simeq\N^3$ given by $(a_1,a_3,a_4)\simeq(b_1,b_3,b_4)$, while on $V_2$ we glue along the identification $\N^3\simeq\N^3$ given by $(a_2,a_3,a_4)\simeq(b_2,b_4,b_3)$. 
\end{itemize}
We obtain a diagram of four Kato fans $F_{U(p)}=\Spec\N^4$, $F_{U(q)}=\Spec\N^4$, $F_{V_1}=\Spec\N^3$, and $F_{V_2}=\Spec\N^3$. If we try to glue those four affine Kato fans, we find (going around the circle once) that an open affine subset $\Spec \N^2$ of $F_{U(p)}$ would have to be identified with itself along the swap $(a_3,a_4)\mapsto(a_4,a_3)$ and the resulting quotient is not a Kato fan. 
\end{example}

\begin{lemma}\label{lemma_charmorfunc}
A morphism $f\mathrel{\mathop:}X\rightarrow Y$ of logarithmic schemes $X$ and $Y$ locally of finite type over $k$ without monodromy induces a morphism $f^F\mathrel{\mathop:}F_X\rightarrow F_Y$ such that the diagram
\begin{equation*}\begin{CD}
(X,\overline{\mathcal{O}}_X)@>\phi_X>> F_X\\
@V(f,\overline{f}^\flat)VV @VV f^FV\\
(Y,\overline{\mathcal{O}}_Y)@>\phi_Y>> F_Y
\end{CD}\end{equation*}
is commutative. The association $f\mapsto f^F$ is functorial in $f$ and, if $f$ is a strict morphism, then $f^F$ is a strict morphism. 
\end{lemma}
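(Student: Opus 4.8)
The plan is to build $f^F$ \emph{locally}, out of the homomorphisms that $f^\flat$ induces on characteristic stalks, and then to glue these affine pieces together. Note that the universal property of Proposition \ref{prop_charmor} cannot be invoked directly: it concerns \emph{strict} morphisms into fine and saturated Kato fans, whereas the composite $\phi_Y\circ(f,\overline f^\flat)\mathrel{\mathop:}(X,\overline M_X)\to F_Y$ is strict only when $f$ is. So I first record the stalk input. The morphism of logarithmic structures $f^\flat\mathrel{\mathop:}f^\ast M_Y\to M_X$ induces on stalks at a point $x\in X$ with $y=f(x)$ a homomorphism $f^\flat_x\mathrel{\mathop:}M_{Y,y}\to M_{X,x}$ satisfying $\rho_X\circ f^\flat_x=f^\#\circ\rho_Y$, where $f^\#\mathrel{\mathop:}\mathcal{O}_{Y,y}\to\mathcal{O}_{X,x}$ is local. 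If $f^\flat_x(m)$ is a unit, then so is $f^\#(\rho_Y(m))=\rho_X(f^\flat_x(m))$, and locality of $f^\#$ forces $\rho_Y(m)$, hence $m$, to be a unit; thus $f^\flat_x$ is local. Passing to characteristics yields a local homomorphism $\overline f^\flat_x\mathrel{\mathop:}\overline M_{Y,y}\to\overline M_{X,x}$ of sharp monoids, and these assemble into a morphism of sharp monoidal spaces $(f,\overline f^\flat)\mathrel{\mathop:}(X,\overline M_X)\to(Y,\overline M_Y)$.

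Next I would define $f^F$ on affine patches. Fix $x\in X$, $y=f(x)$, and by Lemma \ref{lemma_chartsexist} choose open neighborhoods $U\ni x$ and $V\ni y$ with $f(U)\subseteq V$ on which $P=\overline M_{X,x}$ and $Q=\overline M_{Y,y}$ lift to charts, so that $\phi_X$ and $\phi_Y$ restrict to strict morphisms onto $F_U=\Spec P\subseteq F_X$ and $F_V=\Spec Q\subseteq F_Y$. By Proposition \ref{prop_afffans=shmon} the homomorphism $\overline f^\flat_x\mathrel{\mathop:}Q\to P$ induces a morphism $(\overline f^\flat_x)^\#\mathrel{\mathop:}\Spec P\to\Spec Q$ of affine Kato fans, which I take as $f^F$ on $F_U$. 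Since $\overline f^\flat_x$ is local it sends $\mathfrak m_P$ to $\mathfrak m_Q$, giving $f^F(\phi_X(x))=\phi_Y(f(x))$. For the square to commute on all of $U$, note that for $x'\in U$ with $y'=f(x')$ strictness identifies $\phi_X(x')$ with the prime $\mathfrak p'\subseteq P$ cut out by the maximal ideal of $\overline M_{X,x'}$ under the generization map $\overline M_{X,x}\to\overline M_{X,x'}$, and similarly $\phi_Y(y')$ with $\mathfrak q'\subseteq Q$. The square formed by the two generization maps and the stalk maps $\overline f^\flat_x,\overline f^\flat_{x'}$ commutes because $\overline f^\flat$ is a morphism of sheaves, and locality of $\overline f^\flat_{x'}$ then gives $(\overline f^\flat_x)^{-1}(\mathfrak p')=\mathfrak q'$, i.e. $f^F\circ\phi_X=\phi_Y\circ f$ on $U$.

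The main obstacle is the \emph{gluing}: I must verify that these affine morphisms are independent of the chosen charts and agree on overlaps $F_U\cap F_{U'}$, so that they patch into a single continuous morphism $f^F\mathrel{\mathop:}F_X\to F_Y$ of Kato fans. This again reduces to the compatibility of the characteristic stalk maps under generization, which holds precisely because $\overline f^\flat$ is globally defined as a morphism of sheaves of monoids: covering each overlap by smaller affine patches $F_W=\Spec\overline M_{X,x''}$ and observing that both restrictions equal $(\overline f^\flat_{x''})^\#$ gives agreement on underlying spaces and on structure sheaves. Continuity of $f^F$ and the global commutativity of the square in the statement then follow from their affine-local counterparts established above, since the $F_U$ cover $F_X$.

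Finally I would dispatch the two supplementary claims. Functoriality $(g\circ f)^F=g^F\circ f^F$ and $(\id_X)^F=\id_{F_X}$ follows formally from the identity $\overline{(g\circ f)}^\flat_x=\overline f^\flat_x\circ\overline g^\flat_{f(x)}$ on characteristic stalks together with the functoriality of $P\mapsto\Spec P$. For the last assertion it suffices that $f$ be strict, as holds when $f$ is strict \'etale: then $f^\ast M_Y\to M_X$ is an isomorphism, so each $\overline f^\flat_x\mathrel{\mathop:}\overline M_{Y,f(x)}\to\overline M_{X,x}$ is an isomorphism of sharp monoids, whence each local piece $F_U=\Spec\overline M_{X,x}\to\Spec\overline M_{Y,f(x)}=F_V$ is an isomorphism of affine Kato fans. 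This is exactly the condition that the induced map $f^{F,-1}\mathcal{O}_{F_Y}\to\mathcal{O}_{F_X}$ be an isomorphism, i.e. that $f^F$ be strict.
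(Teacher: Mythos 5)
Your proposal is correct and follows the same route as the paper, which simply declares the lemma ``an immediate consequence of the local definition of the characteristic morphism'': you construct $f^F$ affine-locally from the local homomorphisms $\overline{f}^\flat_x$ on characteristic stalks, glue via compatibility with generization, and derive strictness from the stalkwise isomorphisms in the strict \'etale case, exactly as the paper intends. Your write-up merely supplies the details (locality of $f^\flat_x$, commutativity on non-closed points, chart-independence) that the paper leaves implicit.
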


\begin{proof}
The morphism $f$ induces a compatible family of morphisms $f^\flat_x\mathrel{\mathop:}\overline{M}_{Y,f(x)}\rightarrow\overline{M}_{X,x}$ for all $x\in X$ and therefore there is an induced morphism $f^F\mathrel{\mathop:}F_X\rightarrow F_Y$ that is functorial. Note, in particular, that a strict morphism $f\mathrel{\mathop:}X\rightarrow Y$ induces isomorphisms $\overline{M}_{Y,f(x)}\simeq\overline{M}_{X,x}$ for all $x\in X$ implying that $f^F$ is strict. 
\end{proof}

\subsection{Logarithmically smooth schemes}\label{section_logsmooth}

In \cite{Kato_logstr} K. Kato defines the notion of \emph{logarithmically smooth morphism} $X\rightarrow Y$ between logarithmic schemes and in \cite[Theorem 3.5]{Kato_logstr} (also see \cite[Theorem 4.1]{Kato_logdef}) he gives a criterion on how to check for logarithmic smoothness in terms of the existence of certain well-behaved charts. 

For $Y=\Spec k$, endowed with the trivial logarithmic structure $k^\ast\hookrightarrow k$, this reduces to the following: A logarithmic scheme $X$ is logarithmically smooth over $k$ if and only if every point $x\in X$ there is an \'etale neighborhood $\delta\mathrel{\mathop:}U\rightarrow X$, a fine and saturated monoid $P$ whose torsion part $P^{tors}$ has order invertible on $X$, as well as a chart $P_U\rightarrow \calO_U$ of $M_U=\delta^\ast M_X$ such that the induced morphism $\gamma\mathrel{\mathop:}U\rightarrow\Spec k[P]$ is \'etale. 

Write $X_0$ for the locus of points $x$ in $X$ where the logarithmic structure is trivial, i.e. where $\Mbar_{X,x}=\calO_{X,x}$. We have the following immediate Corollary \ref{cor_logsmooth=toroidal} of Kato's criterion.

\begin{corollary}\label{cor_logsmooth=toroidal}
Let $X$ be a logarithmic scheme that is logarithmically smooth over $k$. Then the open embedding $X_0\hookrightarrow X$ is a \emph{toroidal embedding}. 
\end{corollary}

In other words, $X$ is normal and for every point $x\in X$ there is an \'etale open neighborhood $\delta\mathrel{\mathop:}U\rightarrow X$ as well as an \'etale morphism $\gamma\mathrel{\mathop:}U\rightarrow Z$ into a toric variety $Z$ with big torus $T$ such that $\gamma^{-1}(T)=U_0=\delta^{-1}(X_0)$ (see \cite[Section 2.1]{Thuillier_toroidal}). Toroidal embeddings may also be defined using formal charts over an algebraic closure of the base field, as in \cite[Section 2.1]{KKMSD_toroidal}; both of these definitions are equivalent by \cite[Section 2]{Denef_toroidal}. 

\begin{proof}[Proof of Corollary \ref{cor_logsmooth=toroidal}]
By \cite[Proposition (8.3) and Theorem (4.1)]{Kato_toricsing} $X$ is normal. Choose a splitting $P=P^{tors}\oplus \tilde{P}$ with $\tilde{P}$ being toric as in Lemma \ref{lemma_fsmondecomp}. Since the order of $P^{tors}$ is invertible over $X$, the induced morphism $\Spec k[P]\rightarrow \Spec k[\tilde{P}]$ and therefore the composition $\gamma\mathrel{\mathop:}U\rightarrow \Spec k[P]\rightarrow\Spec k[\tilde{P}]$ is \'etale. Moreover, we also have that $\tilde{P}\rightarrow P\rightarrow \calO_U$ defines a chart of $M_U=\delta^{-1}M_X$, because $P^{tors}$ is always mapped into $\calO_U^\ast$. Therefore the morphism $\gamma\mathrel{\mathop:}U\rightarrow \Spec k[\tilde{P}]$ is strict and we have $\gamma^{-1}\big(\Spec k[\tilde{P}^{gp}]\big)=U_0$. Finally, since $\delta\mathrel{\mathop:}U\rightarrow X$ is a strict \'etale morphism we have $\delta^{-1}(X_0)=U_0$.  
\end{proof}

Suppose that $X$ is a logarithmic scheme that is logarithmically smooth over $k$. For a point $x\in X$ we denote by $\mathfrak{m}_{X,\xi}$ the unique maximal ideal in the local ring $\mathcal{O}_{X,x}$ and by $I(M,x)$ the ideal in $\mathcal{O}_{X,x}$ generated by $M_{X,x}-M_{X,x}^\ast$. Set
\begin{equation*}
\Xi(X)=\big\{\xi\in X\big\vert I(M_X,\xi)=\mathfrak{m}_{X,\xi}\big\} \ .
\end{equation*}

By \cite[Proposition (8.2) and (10.1)]{Kato_toricsing} we have that, if $M_X$ is defined in the Zariski topology, the sharp monoidal space $\big(\Xi(X),\Mbar_X\vert_{\Xi(X)}\big)$ is the Kato fan associated to $X$ and the characteristic morphism $\phi_X$ sends a point $x\in X$ to the point $\phi_X(x)$ that corresponds to the ideal $I(M,x)$ in $\mathcal{O}_{X,x}$. So, in particular, a Zariski logarithmic scheme $X$ that is logarithmically smooth over $k$ is without monodromy.

\begin{example}[Toric varieties]\label{example_charmortoric}
Let $X=X(\Delta)$ be a toric variety with big torus $T$. Then the characteristic fan $F_X$ can identified with the set of generic points of the $T$-orbits in $X$ and the structure sheaf is given by $\overline{M}_\Delta=M_\Delta/M_\Delta^\ast$. The characteristic morphism $\phi_\Delta\mathrel{\mathop:}(X,\overline{\mathcal{O}}_X)\rightarrow F_X$ is given by sending a point $x\in X$ to the generic point of the unique $T$-orbit containing $x$.
\end{example}

Let $X$ be an (\'etale) logarithmic scheme that is logarithmically smooth over $k$. The logarithmic structure on $X$ naturally induces a stratification of $X$ by locally closed subsets. For every $\xi\in\Xi(X)$ we have a stratum 
\begin{equation*}
E(\xi)=\overline{\{\xi\}}-\bigcup_{\xi\rightarrow\xi'}\overline{\{\xi'\}}
\end{equation*}
where the union on the right is running over all strict specializations of $\xi'$ of $\xi$ (written as $\xi\rightarrow\xi'$) with $\xi\neq \xi'$ and $\overline{\{.\}}$ denotes the closure of a point in $X$. Note, in particular, that, if $X$ is connected, then $X_0$ is the unique open stratum.


\section{Non-Archimedean analytification -- a reminder}\label{section_analytification}

\subsection{Berkovich's analytification functor} Let $k$ be a field that is endowed with a possibly trivial non-Archimedean absolute value $\vert.\vert$. In \cite{Berkovich_book} Berkovich defines an analytification functor $(.)^{an}$ associating to a scheme $X$, locally of finite type over $k$, a locally ringed space $X^{an}$ that has desirable topological properties together with a structure morphism $\rho\mathrel{\mathop:}X^{an}\rightarrow X$. We refer to \cite[Section 2]{Gubler_guide} and \cite[Sections 2.2 and 3]{BakerPayneRabinoff_nonArchtrop} for down-to-earth treatments of these notions and to \cite{Berkovich_book} and \cite{Berkovich_etalecoho} for the general theory.

Recall that, if $X=\Spec(A)$ is affine, a point $x$ in $X$ is given by a multiplicative seminorm 
\begin{equation*}
\vert . \vert_x\mathrel{\mathop:}A\rightarrow\mathbb{R}_{\geq 0}
\end{equation*}
that extends the norm on $k$. In this case the topology on $X^{an}$ is the coarsest making the maps $X^{an}\rightarrow \mathbb{R}_{\geq 0}, \ x\mapsto\vert f\vert_x$ for all $f\in A$ continuous and $\rho$ is given by $x\mapsto\ker\vert.\vert_x$. In general, i.e. if $X$ is not affine, one can define the space $X^{an}$ by choosing a covering $U_i$ of $X$ by open and affine subsets and glueing the $U_i^{an}$ over all $(U_i\cap U_j)^{an}=\rho^{-1}(U_i\cap U_j)$.

Alternatively (see e.g. \cite[Section 2.5]{Payne_topnonArch}) we may characterize $X^{an}$ as the set of pair $(K, \phi)$ consisting of a non-Archimedean extension $K$ of $k$ and a morphism $\phi\mathrel{\mathop:}\Spec K\rightarrow X$ modulo an equivalence relation. Two such pairs $(K,\phi)$ and $(L,\psi)$ are equivalent, if there is a common non-Archimedean extension $\Omega$ of both $K$ and $L$ that makes the diagram
\begin{equation*}\begin{CD}
\Spec \Omega @>>> \Spec L\\
@VVV @VV\psi V\\
\Spec K @>\phi>> X
\end{CD}\end{equation*}
commute. 

\subsection{Thuillier's analytification functor} For the purpose of our construction it is more convenient to work with a slightly different analytification functor $(.)^{\beth}$, defined by Thuillier  \cite[Section 1]{Thuillier_toroidal} for all schemes $X$ that are locally of finite type over a trivially valued field $k$. This functor is closely related to the generic fiber functor in \cite{Berkovich_vanishingcyclesII} on the category of schemes that are locally of finite presentation over a valuation ring $R$. 

If $X=\Spec A$ is affine, the analytic space $X^\beth$ is the analytic domain $X^{\beth}$ of $X^{an}$ that is characterized by $\vert f\vert_x\leq 1$ for all $f\in A$. In the non-affine case choose an open affine covering $U_i$ of $X$ and glue the $U_i^\beth$ over $(U_i\cap U_j)^\beth=r^{-1}(U_i\cap U_j)$, where $r$ denotes the \emph{reduction map} $r\mathrel{\mathop:}X^\beth\rightarrow X$ that is induced by $x\mapsto\big\{f\in A\big\vert \vert f\vert_x<1\big\}$ on affine open subsets $U=\Spec(A)$. Similiarly to the above, every point in $X^\beth$ can be represented by a pair $(R,\phi)$ where $R$ is a valuation ring $R$ extending $k$ and $\phi\mathrel{\mathop:}\Spec R\rightarrow X$ is a morphism. 

The natural inclusion $(\Spec A)^\beth\subseteq (\Spec A)^{an}$ on the level of affine open patches induces a morphism $X^\beth\rightarrow X^{an}$ and, in a slight abuse of notation, we denote its composition with $\rho\mathrel{\mathop:}X^{an}\rightarrow X$ by $\rho$ as well. If $X$ is separated over $k$, the morphism $X^\beth\rightarrow X^{an}$ defines an isomorphism onto an analytic domain in $X^{an}$. If $X$ is proper over $k$, all $K$-rational points are already $R$-integral, where $R$ denotes the valuation ring of $K$, by the valuative criterion of properness. In this case the two functors $(.)^{an}$ and $(.)^\beth$ agree, that is we have $X^\beth=X^{an}$.

\subsection{Analytification of the projective line and its torus invariant subspaces} Let $k$ be a algebraically closed field carrying the trivial absolute value. In the following examples we illustrate the differences between the two analytification functors $(.)^{an}$ and $(.)^{\beth}$ using the affine line $\A^1$, the one-dimensional torus $\G_m$, and the projective line $\PP^1$. 

\begin{example}
The analytic space $(\A^1)^{an}$ consists of all seminorms on $k[x]$ extending the trivial absolute value on $k$. They can be classified as follows:
\begin{itemize}
\item  For every closed point $a$ in $\A^1$ with $a\in k$ we have the semi-norm $\vert .\vert_{a,0}$ that is determined by $\vert x-a\vert_{a,0} =0$. 
\item The \emph{Gauss point $\vert.\vert_\eta $} is determined by $\vert f\vert_\eta=1$ for all $f\in k[x]$.  
\item For every $a\in k$ and $0<r<1$ we have a seminorm $\vert.\vert_{a,r}$ that is uniquely determined by $\big\vert (x-a)\big\vert_{a,r}=r$.
\item Finally, for every $r>1$ there is a seminorm $\vert.\vert_{\infty,r}$ determined by $\vert x\vert =r$. \end{itemize}

We have $\lim_{r\rightarrow 1}\vert .\vert_{a,r}=\lim_{r\rightarrow 1}\vert .\vert_{\infty,r}=\vert.\vert_{\eta}$. If we associate $\vert.\vert_{a,0}$ to a closed point $a$ in $\A^1$ and the Gauss point $\vert.\vert_{1,0}$ to the generic point $\eta$ of $\A^1$ we can embed $\A^1$ into $(\A^1)^{an}$. So the analytic space $(\A^1)^{an}$ consists of compact intervals connecting a closed point $a$ in $\A^1$ to the Gauss point $\eta$ as well as an half-open interval that connects $\eta$ to $\infty$.

The analytic space $(\A^1)^\beth$ is the subset of $(\A^1)^{an}$ that does not contain all points of the form $\vert .\vert_{r}$ for $r>1$, since $(\A^1)^\beth$ only contains bounded seminorms. 
\begin{center}\begin{tikzpicture}

\draw (0,2) circle (0.08 cm);
\fill (0,0) circle (0.08 cm);
\fill (-2,-2) circle (0.08 cm);
\fill (-1,-2) circle (0.08 cm);
\fill (0,-2) circle (0.08 cm);
\fill (1,-2) circle (0.08 cm);
\fill (2,-2) circle (0.08 cm);

\draw (0,0) -- (0,1.92);
\draw (0,0) -- (-2,-2);
\draw (0,0) -- (-1,-2);
\draw (0,0) -- (0,-2);
\draw (0,0) -- (1,-2);
\draw (0,0) -- (2,-2);

\node at (0,-2.5) {$0$};
\node at (0,2.5) {$\infty$};
\node at (2.5,-2) {$\A^1$};
\node at (0.5,0) {$\eta$};
\node at (2,1) {$(\A^1)^{an}$};

\fill (-1.7,-1) circle (0.03cm);
\fill (-1.9,-1) circle (0.03cm);
\fill (-2.1,-1) circle (0.03cm);
\fill (1.7,-1) circle (0.03cm);
\fill (1.9,-1) circle (0.03cm);
\fill (2.1,-1) circle (0.03cm);

\fill (8,0) circle (0.08 cm);
\fill (6,-2) circle (0.08 cm);
\fill (7,-2) circle (0.08 cm);
\fill (8,-2) circle (0.08 cm);
\fill (9,-2) circle (0.08 cm);
\fill (10,-2) circle (0.08 cm);

\draw (8,0) -- (6,-2);
\draw (8,0) -- (7,-2);
\draw (8,0) -- (8,-2);
\draw (8,0) -- (9,-2);
\draw (8,0) -- (10,-2);

\node at (8,-2.5) {$0$};
\node at (10.5,-2) {$\A^1$};
\node at (8.5,0) {$\eta$};
\node at (10,1) {$(\A^1)^\beth$};

\fill (6.3,-1) circle (0.03cm);
\fill (6.1,-1) circle (0.03cm);
\fill (5.9,-1) circle (0.03cm);
\fill (9.7,-1) circle (0.03cm);
\fill (9.9,-1) circle (0.03cm);
\fill (10.1,-1) circle (0.03cm);

\end{tikzpicture}\end{center}
\end{example}

\begin{example} The analytic space $\G_m^{an}=\A^1-\{0\}$ is equal to $(\A^1)^{an}-\big\{\vert.\vert_{0,0}\big\}$ and henceforth contains two half-open intervals: One connects the Gauss point $\eta$ to $\infty$, the other connects $\eta$ to $0$. The analytic space $\G_m^\beth$, however, does not contain those two half-open intervals. One may think of $\G_m^\beth$ as a non-Archimedean analogue of $S^1$. 

\begin{center}\begin{tikzpicture}

\draw (0,2) circle (0.08 cm);
\fill (0,0) circle (0.08 cm);
\fill (-2,-2) circle (0.08 cm);
\fill (-1,-2) circle (0.08 cm);
\draw (0,-2) circle (0.08 cm);
\fill (1,-2) circle (0.08 cm);
\fill (2,-2) circle (0.08 cm);

\draw (0,0) -- (0,1.92);
\draw (0,0) -- (-2,-2);
\draw (0,0) -- (-1,-2);
\draw (0,0) -- (0,-1.92);
\draw (0,0) -- (1,-2);
\draw (0,0) -- (2,-2);

\node at (0,-2.5) {$0$};
\node at (0,2.5) {$\infty$};
\node at (2.5,-2) {$\G_m$};
\node at (0.5,0) {$\eta$};
\node at (2,1) {$(\G_m)^{an}$};

\fill (-1.7,-1) circle (0.03cm);
\fill (-1.9,-1) circle (0.03cm);
\fill (-2.1,-1) circle (0.03cm);
\fill (1.7,-1) circle (0.03cm);
\fill (1.9,-1) circle (0.03cm);
\fill (2.1,-1) circle (0.03cm);

\fill (8,0) circle (0.08 cm);
\fill (6,-2) circle (0.08 cm);
\fill (7,-2) circle (0.08 cm);
\fill (9,-2) circle (0.08 cm);
\fill (10,-2) circle (0.08 cm);

\draw (8,0) -- (6,-2);
\draw (8,0) -- (7,-2);
\draw (8,0) -- (9,-2);
\draw (8,0) -- (10,-2);

\node at (10.5,-2) {$\G_m$};
\node at (8.5,0) {$\eta$};
\node at (10,1) {$(\G_m)^\beth$};

\fill (6.3,-1) circle (0.03cm);
\fill (6.1,-1) circle (0.03cm);
\fill (5.9,-1) circle (0.03cm);
\fill (9.7,-1) circle (0.03cm);
\fill (9.9,-1) circle (0.03cm);
\fill (10.1,-1) circle (0.03cm);

\end{tikzpicture}\end{center}
\end{example}

\begin{example} The projective line $\PP^1$ is proper over $k$ and so $(\PP^1)^{an}$ and $(\PP^1)^\beth$ are equal. This space compactifies $(\A^1)^{an}$ by adding a point $\infty$ to the half-open interval connecting $\eta$ to $\infty$.
\begin{center}\begin{tikzpicture}

\fill (0,2) circle (0.08 cm);
\fill (0,0) circle (0.08 cm);
\fill (-2,-2) circle (0.08 cm);
\fill (-1,-2) circle (0.08 cm);
\fill (0,-2) circle (0.08 cm);
\fill (1,-2) circle (0.08 cm);
\fill (2,-2) circle (0.08 cm);

\draw (0,0) -- (0,2);
\draw (0,0) -- (-2,-2);
\draw (0,0) -- (-1,-2);
\draw (0,0) -- (0,-2);
\draw (0,0) -- (1,-2);
\draw (0,0) -- (2,-2);

\node at (0,-2.5) {$0$};
\node at (0,2.5) {$\infty$};
\node at (3,-2) {$\A^1$};
\node at (0.5,0) {$\eta$};
\node at (2,1) {$(\PP^1)^{an}=(\PP^1)^\beth$};

\fill (-1.7,-1) circle (0.03cm);
\fill (-1.9,-1) circle (0.03cm);
\fill (-2.1,-1) circle (0.03cm);
\fill (1.7,-1) circle (0.03cm);
\fill (1.9,-1) circle (0.03cm);
\fill (2.1,-1) circle (0.03cm);

\end{tikzpicture}\end{center}
\end{example}

Note that the structure morphism $\rho\mathrel{\mathop:}(\PP^1)^{an}\rightarrow\PP^1$ associates to a seminorm $\vert .\vert_{a,0}$ the closed point $a$ in $\PP^1$, to $\infty$ the point $\infty$ in $\PP^1$, but to all other points in $(\PP^1)^{an}$ the generic point $\eta$ of $\PP^1$. The reduction morphism $r\mathrel{\mathop:}(\PP^1)^\beth\rightarrow\PP^1$ associates only to the Gauss point $\vert.\vert_{1,0}$ the generic point of $\PP^1$. All other points $\vert. \vert_{a,r}$ for $r<1$ are mapped to the closed point $a$ in $\PP^1$, while the points $\vert.\vert_r$ for $r>1$ are mapped to $\infty$. It is exactly this dichotomy between $\rho$ and $r$ that lies at the very heart of many applications of tropical geometry.


\section{Constructing the troplicalization map}\label{section_tropicalization}


\subsection{Tropicalization via Kato fans -- the case without monodromy} Consider a (fine and saturated) Zariski logarithmic scheme $X$ locally of finite type over $k$ without monodromy and denote its characteristic morphism from Section \ref{section_monodromy} by $\phi_X\mathrel{\mathop:}(X,\overline{\mathcal{O}}_X)\rightarrow F_X$. As explained in Sections \ref{section_conecomplexes} and \ref{section_extensions}, we may associate to $X$ the \emph{cone complex} $\Sigma_X=\Sigma_{F_X}$ and its \emph{canonical extension} $\Sigmabar_X=\Sigmabar_{F_X}$.

\begin{definition}
The \emph{tropicalization map} associated to $X$
\begin{equation*}\begin{split}
\trop_X\mathrel{\mathop:}X^\beth&\longrightarrow\Sigmabar_X\\
x&\longmapsto \trop_X(x)
\end{split}\end{equation*}
is defined as follows: A point $x\in X^\beth$ can be represented by a morphism $\Spec R\rightarrow X$ for a valuation ring $R$ extending $k$ and this naturally induces a morphism $\underline{x}\mathrel{\mathop:}\Spec R\rightarrow (X,\overline{\mathcal{O}}_{X})$ in $\mathbf{SMS}$. Define the point $\trop_X(x)\in\Sigmabar_X=F_X(\Rbar_{\geq 0})$ as the composition
\begin{equation*}\begin{CD}
\Spec \Rbar_{\geq 0}@>\val^\#>> \Spec R@>\underline{x}>> (X,\overline{\mathcal{O}}_X) @>\phi_X>>X 
\end{CD}\end{equation*}
in the category $\mathbf{SMS}$, where $\val^\#$ denotes the morphism induced by the valuation $\val\mathrel{\mathop:}R\rightarrow \Rbar_{\geq 0}$ on $R$. 
\end{definition}

Note that $\trop_X$ is induced by $\phi_X$ in analogy with the analytic morphism $f^\beth\mathrel{\mathop:}X^\beth\rightarrow (X')^\beth$ being induced by a morphism $f\mathrel{\mathop:}X\rightarrow X'$ of schemes locally of finite over $k$.

\begin{proposition}\label{prop_tropprop}\begin{enumerate}[(i)]
\item The tropicalization map is well-defined and continuous. It makes the diagrams
\begin{equation*}\begin{CD}
X^\beth @>\trop_X>>\overline{\Sigma}_X\\
@Vr_XVV @VVr_{F_X}V\\
(X,\overline{\mathcal{O}}_X) @>\phi_X>> F_X
\end{CD}
\qquad\qquad
\begin{CD}
X^\beth @>\trop_X>>\overline{\Sigma}_X\\
@V\rho_X VV @VV\rho_{F_X} V\\
(X,\overline{\mathcal{O}}_X)@>\phi_X >>F_X
\end{CD}\end{equation*}
commute. 
\item A morphism $f\mathrel{\mathop:}X\rightarrow X'$ of Zariski logarithmic schemes locally of finite type over $k$ without monodromy induces a morphism $\Sigma(f)\mathrel{\mathop:}\Sigma_X\rightarrow\Sigma_{X'}$ whose canonical extension $\Sigmabar(f)$ makes the diagram
\begin{equation*}\begin{CD}
X^\beth @>\trop_X >>\Sigmabar_X\\
@Vf^\beth VV @VV\Sigmabar(f)V\\
(X')^\beth @>\trop_{X'}>>\Sigmabar_{X'}
\end{CD}\end{equation*}
commute. The association $f\mapsto\Sigma(f)$ is functorial in $f$. 
\end{enumerate}\end{proposition}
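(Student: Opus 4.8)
The plan is to reduce every assertion to the affine-local picture, where $\trop_X$ becomes the explicit local tropicalization map of Popescu-Pampu and Stepanov \cite{PopescuPampuStepanov_localtrop}, and then to glue and chase diagrams. A point $x\in X^\beth$ is recorded by its seminorm $\vert\cdot\vert_x$, and although it is presented by an integral point $\underline{x}\mathrel{\mathop:}\Spec R\rightarrow (X,\overline{\mathcal{O}}_X)$, the valuation on $R$ is nothing but $\val=-\log\vert\cdot\vert_x$. For well-definedness in part (i), I would work on an affine open $U\subseteq X$ on which $P=\overline{M}_{X,\xi}$ lifts to a chart $\alpha\mathrel{\mathop:}P\rightarrow\mathcal{O}_U$ (Lemma \ref{lemma_chartsexist}), so that $F_U=\Spec P$ and $\sigmabar_U=r^{-1}(F_U)=\Hom(P,\Rbar_{\geq 0})$, and identify the abstract composition $\phi_X\circ\underline{x}\circ\val^\#$ with
\[
P\longrightarrow\Rbar_{\geq 0},\qquad p\longmapsto\val\big(\alpha(p)\big)=-\log\big\vert\alpha(p)\big\vert_x .
\]
Since $\val=-\log\vert\cdot\vert_x$, this is intrinsic to $x$ and independent of the chosen representative; moreover $\vert u\vert_x=1$ for every unit $u$ (as $\vert u\vert_x\leq 1$ and $\vert u^{-1}\vert_x\leq 1$), so the value depends only on the class of $\alpha(p)$ in $\overline{\mathcal{O}}_X$, which legitimizes the factorization through $(X,\overline{\mathcal{O}}_X)$. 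These local homomorphisms glue, because the charts are governed by the single characteristic morphism $\phi_X$, into a well-defined point of $F_X(\Rbar_{\geq 0})=\Sigmabar_X$ that restricts to the local tropicalization map on each $U^\beth$.

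For continuity I would use that $P$ is fine, hence finitely generated, so the weak topology on $\Hom(P,\Rbar_{\geq 0})$ is the coarsest making $u\mapsto u(p)$ continuous for the finitely many generators $p$. Continuity of $x\mapsto -\log\vert\alpha(p)\vert_x$ then follows from the definition of the topology on $X^\beth$ together with continuity of $-\log\mathrel{\mathop:}[0,1]\rightarrow\Rbar_{\geq 0}$, and these patch to global continuity. The two commuting squares are a diagram chase tracking two distinguished points of $\Spec\Rbar_{\geq 0}$: the closed point $\Rbar_{>0}$ pulls back under $\val^\#$ to the maximal ideal $\{f\,\vert\,\vert f\vert_x<1\}$ of $R$, whose image in $X$ is $r(x)$, giving $r(\trop_X(x))=\phi_X(r(x))$; and $\{\infty\}$ pulls back to the generic point of $\Spec R$, whose image is $\ker\vert\cdot\vert_x=\rho(x)$, giving $\rho(\trop_X(x))=\phi_X(\rho(x))$. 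Both squares thus commute by unwinding the definitions of $r$ and $\rho$ on the two sides.

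For part (ii) I would set $\Sigmabar(f):=\Sigmabar(f^F)$, with $f^F\mathrel{\mathop:}F_X\rightarrow F_{X'}$ the morphism of Kato fans from Lemma \ref{lemma_charmorfunc} and $\Sigmabar(f^F)$ the induced postcomposition $u\mapsto f^F\circ u$ of Proposition \ref{prop_KatofanECCfunc}. Representing $x\in X^\beth$ by $\Spec R\rightarrow X$, its image $f^\beth(x)$ is represented by $\Spec R\rightarrow X\xrightarrow{f}X'$, so that $\underline{f^\beth(x)}=(f,\overline{f}^\flat)\circ\underline{x}$; then
\begin{align*}
\trop_{X'}\big(f^\beth(x)\big) &= \phi_{X'}\circ(f,\overline{f}^\flat)\circ\underline{x}\circ\val^\#\\
&= f^F\circ\phi_X\circ\underline{x}\circ\val^\# = f^F\circ\trop_X(x) = \Sigmabar(f)\big(\trop_X(x)\big),
\end{align*}
the middle equality being the commutative square of Lemma \ref{lemma_charmorfunc}. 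Functoriality of $f\mapsto\Sigmabar(f)$ then follows by composing the functoriality of $f\mapsto f^F$ (Lemma \ref{lemma_charmorfunc}) with that of $f^F\mapsto\Sigmabar(f^F)$ (Proposition \ref{prop_KatofanECCfunc}).

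I expect the main obstacle to be the first identification: matching the categorical composition $\phi_X\circ\underline{x}\circ\val^\#$ in $\mathbf{SMS}$ with the concrete formula $p\mapsto-\log\vert\alpha(p)\vert_x$, and in particular checking that $\val^\#$ and the sharpened morphism $\underline{x}$ interact so that the composition lands in $\Hom(P,\Rbar_{\geq 0})$ and depends only on $x$. Once this translation is secured, continuity, the two squares, and functoriality are all formal consequences.
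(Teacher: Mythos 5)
Your proposal is correct and follows essentially the same route as the paper: reduce to the affine case where $\trop_X$ is the explicit map $x\mapsto\big(p\mapsto-\log\vert\alpha(p)\vert_x\big)$, obtain the two squares by tracking the images of the points $\Rbar_{>0}$ and $\{\infty\}$ of $\Spec\Rbar_{\geq 0}$ through $\val^\#$ and $\underline{x}$, and deduce (ii) by setting $\Sigmabar(f)=\Sigmabar(f^F)$ and composing Lemma \ref{lemma_charmorfunc} with Proposition \ref{prop_KatofanECCfunc}. Your added checks (invariance under units so the map factors through $\overline{\mathcal{O}}_X$, and independence of the representative $\Spec R\rightarrow X$) are points the paper leaves implicit, and they are handled correctly.
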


\begin{proof}
Given two representatives $\alpha\mathrel{\mathop:}\Spec R\rightarrow X$ and $\beta\mathrel{\mathop:}\Spec R'\rightarrow X$ of the same point $x\in X^\beth$, there is a valuation ring $\Omega$ extending both $R$ and $R'$ that makes the diagram 
\begin{equation*}\begin{CD}
\Spec \Omega @>>>\Spec R'\\
@VVV @VV\beta V\\
\Spec R @>\alpha>> X
\end{CD}\end{equation*}
commute. Thus we have $\val^\#\circ \alpha=(\val')^\#\circ\beta$ and so $\trop_X$ is well-defined. 

Suppose that both $X=\Spec A$ and $F_X=\Spec P$ are both affine and choose a homomorphism $\phi^\#\mathrel{\mathop:}P\rightarrow A$ inducing the characteristic morphism $\phi_X\mathrel{\mathop:}\Spec A\rightarrow\Spec P$. Then the tropicalization map 
\begin{equation*}
\trop_X\mathrel{\mathop:}X^\beth\longrightarrow\overline{\Sigma}_X=\Hom(P,\Rbar_{\geq 0})
\end{equation*} 
is given by 
\begin{equation*}
x\longmapsto \big(p\mapsto-\log\vert \phi^\#(p)\vert\big)
\end{equation*}
for $x\in X^\beth$. This shows that in the affine case $\trop_X$ is continuous; the general case follows by glueing, once we have established that the diagram for the reduction maps is commutative. 

The commutativity of the diagrams follows from the following characterization of $r_X$ and $\rho_X$: Given $x\in X^\beth$ represented by a morphism $\underline{x}\mathrel{\mathop:}\Spec R\rightarrow X$ for an integral valuation ring $R$ extending $k$, the reduction map $r_X$ sends $x$ to the point $\underline{x}(\eta_s)$ for the special point $\eta_s=\big\{a\in R\big\vert \vert a\vert < 1\big\}\in\Spec R$ and the structure morphism $\rho_X$ sends $x$ to the point $\underline{x}(\eta_g)$ for the generic point $\eta_g=\big\{a\in R\big\vert\vert a\vert =0\big\}\in\Spec R$. Therefore we have
\begin{equation*}\begin{split}
(\phi_X\circ r_X) (x) &=\phi_X\big( \underline{x}(\eta_s)\big)\\
&= (\phi_X\circ\underline{x}) \big(\val^{-1}(\Rbar_{>0})\big)\\
&= r_{F_X}\circ \val^\#\circ\phi_X\circ\underline{x}=(r_{F_X}\circ \trop_X)(x)
\end{split}\end{equation*}
as well as 
\begin{equation*}\begin{split}
(\phi_X\circ \rho_X) (x) &=\phi_X\big( \underline{x}(\eta_g)\big)\\
&= (\phi_X\circ\underline{x}) \big(\val^{-1}(\{\infty\})\big)\\
&= \rho_{F_X}\circ \val^\#\circ\phi_X\circ\underline{x}=(\rho_{F_X}\circ \trop_X)(x) \ .
\end{split}\end{equation*}

Part (ii) immediately follows from Lemma \ref{lemma_charmorfunc}, Proposition \ref{prop_KatofanECCfunc} and the definition of the tropicalization map. Note that we hereby set $\Sigma(f)=\Sigma(f^F)$ as well as $\Sigmabar(f)=\Sigmabar(f^F)$. 
\end{proof}

\subsection{Tropicalization in the presence of monodromy}

In this section we construct and describe the tropicalization map associated to a general \'etale logarithmic scheme $X$ that is locally of finite type over $k$. 

\begin{proposition}
There is a generalized cone complex $\Sigma_X$ and a continuous tropicalization map
\begin{equation*}
\trop_X\mathrel{\mathop:}X^\beth\longrightarrow\Sigmabar_X
\end{equation*}
into the canonical extension $\Sigmabar_X$ of $\Sigma_X$ such that for every strict surjective \'etale cover $U\rightarrow X$ by a logarithmic scheme $U$ without monodromy there is a commutative diagram
\begin{equation*}\label{eq_diagnomonodromy}\begin{CD}
U^\beth @>\trop_U>>\Sigmabar_U\\
@VVV @VVV\\
X^\beth @>\trop_X>>\Sigmabar_X
\end{CD}\end{equation*}
and the pair $(\Sigma_X,\trop_X)$ is initial among all such maps. 
\end{proposition}

In other words, given a continuous map $\tau\mathrel{\mathop:}X^\beth\rightarrow\Sigmabar$ into the canoncial extension of a generalized cone complex $\Sigma$ such that for all strict \'etale morphisms $U\rightarrow X$ from a logarithmic scheme without monodromy there is a morphism $\Sigma_U\rightarrow\Sigma$ making the diagram
\begin{equation*}\begin{CD}
U^\beth @>\trop_U>>\Sigmabar_U\\
@VVV @VVV\\
X^\beth @>\trop_X>>\Sigmabar
\end{CD}\end{equation*}
commute, then there is a unique morphism $\Sigma_X\rightarrow\Sigma$ making the diagram
\begin{center}\begin{tikzpicture}
  \matrix (m) [matrix of math nodes,row sep=1em,column sep=2em,minimum width=2em]
  {  
  X^\beth& &\Sigmabar \\ 
  &  \Sigmabar_X &\\
  };
  \path[-stealth]
    (m-1-1) edge node [above] {$\tau$} (m-1-3)
    (m-1-1) edge node [below left] {$\trop_X$} (m-2-2)
    (m-2-2) edge node [right] {} (m-1-3);
\end{tikzpicture}\end{center}
commute. 

\begin{proof}
For every strict \'etale morphism $U\rightarrow X$ from a small logarithmic scheme $U$ with $P_U=\Gamma(U,\Mbar_U)$, we have a cone $\sigma_U=\Hom(P_U,\R_{\geq 0})$ as well as a continuous tropicalization map 
\begin{equation*}
\trop_U\mathrel{\mathop:}U^\beth\longrightarrow\sigmabar_U \ .
\end{equation*}
Whenever there is another strict morphism $V\rightarrow U$ from a small logarithmic scheme $V$ to $U$, we have an induced face morphism $\sigma_V\rightarrow\sigma_U$. Define $\Sigma_X$ as the colimit of all $\sigma_U$, taken over all strict \'etale morphisms $U\rightarrow X$ from a small logarithmic scheme $U$.

By \cite[Lemma 6.1.3]{AbramovichCaporasoPayne_tropicalmoduli} the analytic space $X^\beth$ is the topological colimit of all such $U^\beth$ and therefore we may define a natural continuous tropicalization map 
\begin{equation*}
\trop_X\mathrel{\mathop:}X^\beth\longrightarrow\Sigmabar_X
\end{equation*}
by the universal property of colimits in the category of topological space. Since every logarithmic scheme $U$ without monodromy can be written as a colimit of small open subsets, the diagram \eqref{eq_diagnomonodromy} commutes as well.

Let $\tau\mathrel{\mathop:}X^\beth\rightarrow \Sigmabar$ be a continuous map into the canonical extension of a generalized cone complex $\Sigma$ as above. Then the diagram 
\begin{equation*}\begin{CD}
U^\beth @>\trop_Y>>\Sigmabar_U\\
@VVV @VVV\\
X^\beth @>\tau>>\Sigmabar
\end{CD}\end{equation*}
commutes for all small logarithmic schemes $U$ and therefore we obtain a unique morphism $\Sigma_X\rightarrow\Sigma$ as desired.
\end{proof}

\begin{proof}[Proof of Theorem \ref{thm_tropfunc}]
Let $f\mathrel{\mathop:}X\rightarrow X'$ be a morphism of logarithmic schemes. For every strict \'etale morphism $U'\rightarrow X'$ from a logarithmic scheme $U'$ without monodromy, the fiber product $U=X\times_{X'}U'$ is also without monodromy, since the composition $(U,\Mbar_U)\rightarrow (U',\Mbar_{U'})\rightarrow F_{U'}$ is strict. Therefore there is a unique morphism $\Sigma(f)\mathrel{\mathop:}\Sigma_X\rightarrow\Sigma_{X'}$ of generalized cone complexes that induces a natural continuous map $\Sigmabar(f)\mathrel{\mathop:}\Sigmabar_{X}\rightarrow\Sigmabar_{X'}$ making the diagram 
\begin{equation*}\begin{CD}
X^\beth @>\trop_X>>\overline{\Sigma}_X\\
@Vf^\beth VV @VV\Sigmabar(f) V\\
(X')^\beth @>\trop_{X'}>> \overline{\Sigma}_{X'} 
\end{CD}\end{equation*}
commute.
\end{proof}

Suppose now that $X$ is logarithmicallly smooth. For every stratum $E$ of $X$ the fundamental group $\pi_1(E)$ operates on the (constant!) characteristic monoid $\Mbar_E$ on $E$ and therefore on the cone $\sigma_E=\Hom(\Mbar_E,\R_{\geq 0})$. Denote by $H_E$ the image of $\pi_1(E)$ in $\Aut(\Mbar_E)=\Aut(\sigma_E)$. Whenever $E$ is in the closure of a stratum $E'$ (but not equal to $E'$) we have a proper face morphism $\sigma_{E'}\rightarrow\sigma_{E}$ that is induced by the specialization map $\Mbar_{E}\rightarrow\Mbar_{E'}$. 

\begin{proposition}[\cite{AbramovichCaporasoPayne_tropicalmoduli} Proposition 6.2.6]
Let $X$ be a logarithmically smooth scheme locally of finite type over $k$. Then the generalized cone complex of $X$ is given as the colimit
\begin{equation*}
\Sigma_X=\lim_{\longrightarrow} \sigma_E/H_E 
\end{equation*} 
taken over all logarithmic strata of $X$. 
\end{proposition}

\begin{proof}
Let $U\rightarrow X$ be a strict \'etale morphism from a small logarithmic scheme $U$. Then there is a unique closed stratum $E$ of $X$ the closed stratum of $U$ is mapped into and we have $\sigma_E=\sigma_U=\Hom(P_U,\R_{\geq 0})$. There are two classes of face morphisms in the diagram defining $\Sigma_X$:
\begin{itemize}
\item The operation of $\pi_1(E)$ induces precisely the automorphisms of $\sigma_U$ already contained in $H_E$. 
\item Moreover, whenever $E$ is in the closure of a stratum $E'$ and $U'\rightarrow U$ is a strict \'etale morphism from a small logarithmic scheme such that the closed stratum of $U'$ is mapped into $E'$, the strict \'etale map $U'\rightarrow U$ induces the proper face morphism $\sigma_{E'}=\sigma_{U'}\rightarrow \sigma_{U}=\sigma_{E}$ coming from the specialization map $\Mbar_{E}\rightarrow\Mbar_{E'}$.
\end{itemize}
This gives a complete description of the diagram of face morphism that defines $\Sigma_X$ and so the claim follows.
\end{proof}

\subsection{Non-Archimedean skeletons of logarithmically smooth schemes}

In this section we prove Theorem \ref{thm_skel=trop}. Let $X$ be a logarithmically smooth scheme locally of finite type over $k$. Then, as seen in Corollary \ref{cor_logsmooth=toroidal}, the logarithmic scheme $X$ defines a toroidal embedding $X_0\hookrightarrow X$. The main result of \cite{Thuillier_toroidal} is that there is a strong deformation retraction $\mathbf{p}\mathrel{\mathop:}X^\beth\rightarrow X^\beth$ onto the \emph{skeleton} $\mathfrak{S}(X)$ of $X^\beth$ that depends on the toroidal structure of $X_0\hookrightarrow X$.

Suppose first that $X=\Spec k[P]$ an affine toric variety. Here the deformation retraction map $\mathbf{p}\mathrel{\mathop:}X^\beth\rightarrow X^\beth$ is given by sending $x\in X^\beth$ to the seminorm
\begin{equation*}\begin{split}
k[P]&\longrightarrow\R_{\geq 0}\\
\sum_{p\in P}a_p\chi^p&\longmapsto \max\vert a_p\vert\vert\chi^p\vert(x)
\end{split}\end{equation*}
on $k[P]$ and so $x$ and $\mathbf{p}(x)$ agree, when restricted to $P$. Define a section $J_X\mathrel{\mathop:}\Sigmabar_P\rightarrow X^\beth$ of $\trop_P$ by sending $u\in \Sigmabar_P=\Hom(P,\overline{\R}_{\geq 0})$ to the seminorm 
\begin{equation*}\begin{split}
J_X(u)\mathrel{\mathop:}k[P]&\longrightarrow\R_{\geq 0}\\
\sum_{p\in P}a_p\chi^p&\longmapsto \max\vert a_p\vert e^{-u(p)} 
\end{split}\end{equation*}
and note that $J_X$ is well-defined, continuous, and fulfills $\trop_P\circ J_X=\id$. Every element in $\mathfrak{S}(X)$ arises this way and, since $\Sigmabar_P$ is compact, the tropicalization map induces a homeomorphism $J_X\mathrel{\mathop:}\overline{\Sigma}_X\rightarrow \mathfrak{S}(X)$ making  the diagram
\begin{center}\begin{tikzpicture}
  \matrix (m) [matrix of math nodes,row sep=1em,column sep=2em,minimum width=2em]
  {  
  & & \mathfrak{S}(X) \\ 
  X^\beth&  &\\
  & & \Sigmabar_X\\
  };
  \path[-stealth]
    (m-2-1) edge node [above] {$\mathbf{p}$} (m-1-3)
    (m-2-1) edge node [below] {$\trop_X$} (m-3-3)
    (m-3-3) edge node [right] {$J_X$} (m-1-3);
\end{tikzpicture}\end{center}
commute. 

Now consider a Zariski logarithmic scheme that is logarithmically smooth over $k$. By \cite[Corollaire 3.13]{Thuillier_toroidal} $\mathbf{p}_X$ is uniquely determined by its restriction to small open subsets $U$ of $X$. Choose a strict \'etale morphism $\gamma\mathrel{\mathop:}U\rightarrow Z$ into an affine $T$-toric variety $Z=\Spec k[P]$ such that the image of the closed stratum of $U$ is in the closed $T$-orbit of $Z$. By \cite[Proposition 3.7]{Thuillier_toroidal} the skeleton of $U^\beth$ is defined as $\frakS(U)=(\gamma^\beth)^{-1}\big(\frakS(Z)\big)$ and $\gamma^\beth$ induces a homeomorphism $\frakS(U)\xrightarrow{\sim}\frakS(Z)$. Since the tropicalization map naturally factors as 
\begin{equation*}\begin{CD}
U^\beth @>\gamma^\beth>>Z^\beth @>\trop_Z>>\Hom(P,\Rbar_{\geq 0}) \ ,
\end{CD}\end{equation*}
by Theorem \ref{thm_tropfunc} this yields the claim. 

Finally assume that $X$ is an \'etale logarithmic scheme. Then for every strict surjective \'etale morphism $U\rightarrow X$ from a Zariski logarithmic schemes $U$, the logarithmic scheme $R=U\times_XU$ is also defined in the Zariski topology, i.e. without monodromy. By \cite[Proposition 3.29]{Thuillier_toroidal} all diagrams
\begin{equation*}\begin{CD}
U^\beth @>\mathbf{p}_{U}>>\mathfrak{S}(U)\\
@VVV @VVV\\
X^\beth @>\mathbf{p}_X>> \mathfrak{S}(X)
\end{CD}\end{equation*}
commute and the universal property of colimits implies that there is a continuous map $J_X\mathrel{\mathop:}\Sigmabar_X\rightarrow\mathfrak{S}(X)$ such that the diagram 
\begin{center}\begin{tikzpicture}
  \matrix (m) [matrix of math nodes,row sep=1em,column sep=2em,minimum width=2em]
  {  
  & & \mathfrak{S}(X) \\ 
  X^\beth&  &\\
  & & \Sigmabar_X\\
  };
  \path[-stealth]
    (m-2-1) edge node [above] {$\mathbf{p}$} (m-1-3)
    (m-2-1) edge node [below] {$\trop_X$} (m-3-3)
    (m-3-3) edge node [right] {$J_X$} (m-1-3);
\end{tikzpicture}\end{center}
commutes. This is a homeomorphism, since by \cite[Proposition 3.31]{Thuillier_toroidal} the skeleton $\mathfrak{S}(X)$ is the colimit of $\mathfrak{S}(R)\rightrightarrows\mathfrak{S}(U)$ for every strict \'etale morphism $U\rightarrow X$ from a logarithmic scheme $U$ without monodromy.


\section{Comparison with Kajiwara-Payne tropicalization}\label{section_toricvar}

Let $X=X(\Delta)$ be a normal toric variety with big torus $T=\Spec k[M]$ determined by a rational polyhedral fan $\Delta$ in $N_\mathbb{R}=\Hom(M,\mathbb{R})$. We refer to \cite{Fulton_toricvarieties} for standard notation and background on toric varieties. Consider the Kajiwara-Payne extended tropicalization map 
\begin{equation*}
\trop_\Delta\mathrel{\mathop:}X^{an}\longrightarrow N_\R(\Delta)
\end{equation*} 
as defined in \cite[Section 1]{Kajiwara_troptoric} as well as \cite[Section 3]{Payne_anallimittrop} and alluded to in Section \ref{section_introtroptoric}. 

\begin{proposition}\label{prop_troplog=troptoric}
For a toric variety $X=X(\Delta)$ we have $\Sigma_X=\Delta$, the extended cone complex $\overline{\Sigma}_X$ is the closure of $\Delta$ in $N_\mathbb{R}(\Delta)$, and there is a commutative diagram
\begin{equation*}\begin{CD}
X^\beth @>\trop_X>> \overline{\Sigma}_X\\
@V\subseteq VV @VV\subseteq V\\
X^{an} @>\trop_\Delta>> N_\mathbb{R}(\Delta) \ .
\end{CD}\end{equation*}
\end{proposition}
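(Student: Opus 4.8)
The plan is to treat each of the three assertions chart by chart, fixing a cone $\sigma\in\Delta$ with torus-invariant affine open $U_\sigma=\Spec k[S_\sigma]$, where $S_\sigma=\sigma^\vee\cap M$, and then checking that the local identifications glue. By Example~\ref{example_charmortoric} the Kato fan $F_X=F_\Delta$ consists of the generic points of the $T$-orbits, so that over $U_\sigma$ it is the affine Kato fan $\Spec S_\sigma$ (with sharp structure sheaf $\overline{S}_\sigma=S_\sigma/S_\sigma^\ast$), and the chart $s\mapsto\chi^s$ is a genuine monoid homomorphism $S_\sigma\to k[S_\sigma]$ inducing the characteristic morphism $\phi_\Delta$ on $U_\sigma$.

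First I would verify $\Sigma_X=\Delta$. By Proposition~\ref{prop_conecomplex=fsfan} the cone complex $\Sigma_X$ is covered by the cones $\sigma_{U_\sigma}=\Hom(S_\sigma,\R_{\geq 0})$, and the equivalence $\Sigma$ of Proposition~\ref{prop_rpc=fsmon}, being inverse to $(.)^\vee$, identifies $\Hom(S_\sigma,\R_{\geq 0})=\sigma_{S_\sigma}\cong\sigma$. Since the face relations among the affine patches $\Spec S_\tau\subseteq F_\Delta$ are exactly the face relations among the cones $\tau\in\Delta$, these local identifications glue to $\Sigma_X=\Delta$. Running the same bookkeeping with $\Rbar_{\geq 0}$ in place of $\R_{\geq 0}$, the extended cone complex $\overline{\Sigma}_X$ restricts on $U_\sigma$ to $\Hom(S_\sigma,\Rbar_{\geq 0})$, which is the canonical compactification $\overline{\sigma}$; under the inclusion $\Hom(S_\sigma,\Rbar_{\geq 0})\subseteq\Hom(S_\sigma,\Rbar)=N_\R(\sigma)$ recalled in Section~\ref{section_introtroptoric} this is precisely the closure of $\sigma$. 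Gluing then exhibits $\overline{\Sigma}_X$ as the closure of $\Delta$ in $N_\R(\Delta)$. Here one uses that every homomorphism $S_\sigma\to\Rbar_{\geq 0}$ kills the units $S_\sigma^\ast$, since $0$ is the only additive unit of $\Rbar_{\geq 0}$, so that $\Hom(S_\sigma,\Rbar_{\geq 0})=\Hom(\overline{S}_\sigma,\Rbar_{\geq 0})$ and the local pieces agree with the definition of $\overline{\Sigma}_{F_\Delta}$ through the sharp structure sheaf.

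For the commuting square I would compare the two tropicalization formulas directly on $U_\sigma^\beth$. Following the proof of Proposition~\ref{prop_tropprop}(i) with the chart $\phi^\#=(s\mapsto\chi^s)$, the map $\trop_X$ sends $x$ to the homomorphism $s\mapsto-\log\vert\chi^s\vert_x$ in $\Hom(S_\sigma,\Rbar_{\geq 0})=\overline{\Sigma}_X$, the values being nonnegative because $\vert\chi^s\vert_x\leq 1$ for all $x\in X^\beth$. This is literally the defining formula of the Kajiwara--Payne map $\trop_\Delta$, which sends $x\in U_\sigma^{an}$ to $s\mapsto-\log\vert\chi^s\vert_x$ in $N_\R(\sigma)=\Hom(S_\sigma,\Rbar)$. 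Hence, under the inclusions $X^\beth\subseteq X^{an}$ and $\overline{\Sigma}_X\subseteq N_\R(\Delta)$ established above, $\trop_\Delta$ carries $X^\beth$ into $\overline{\Sigma}_X$ and agrees there with $\trop_X$, which is exactly the asserted commutativity; the local squares glue because all the identifications are compatible with restriction to faces.

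The main difficulty I anticipate is bookkeeping rather than conceptual: one must check that the three affine-local identifications---of the cones, of their canonical compactifications, and of the two tropicalization maps---are simultaneously compatible with the gluing data that build $\Delta$, $N_\R(\Delta)$, and $X^\beth$ from their charts, and in particular that the passage between $S_\sigma$ and its sharpening $\overline{S}_\sigma$ forced by the Kato-fan convention never disturbs these identifications. Once the unit-killing observation above is recorded, this compatibility is routine and no serious obstacle remains.
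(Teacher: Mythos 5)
Your proposal is correct and follows essentially the same route as the paper: reduce to the torus-invariant affine charts $U_\sigma=\Spec k[S_\sigma]$, identify $\sigma=\Hom(S_\sigma,\R_{\geq 0})$ and $\overline{\sigma}=\Hom(S_\sigma,\Rbar_{\geq 0})$ with the closure of $\sigma$ in $N_\R(\sigma)=\Hom(S_\sigma,\Rbar)$, and observe that both tropicalization maps are given on $U_\sigma^\beth$ by $x\mapsto(s\mapsto-\log\vert\chi^s\vert_x)$. Your added remark that every homomorphism $S_\sigma\to\Rbar_{\geq 0}$ kills $S_\sigma^\ast$, so the sharpening forced by the Kato-fan convention is harmless, is a point the paper leaves implicit and is a worthwhile clarification.
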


\begin{proof}
It is enough to check the assertions on $T$-invariant open affine subsets $U_\sigma=\Spec k[S_\sigma]$. We can naturally identify
\begin{equation*}
\sigma = \big\{u\in\Hom(S_\sigma,\mathbb{R})\big\vert u(s)\geq 0 \ \ Ê \forall s\in S_\sigma\big\} = \Hom(S_\sigma,\mathbb{R}_{\geq 0})
\end{equation*}
and $\Hom(S_\sigma,\overline{\mathbb{R}}_{\geq 0})$ is the closure of $\sigma$ in $N_\mathbb{R}(\sigma)=\Hom(S_\sigma,\overline{\mathbb{R}})$. Under these identifications the identity $\trop_\Delta(x)=\trop_X(x)$ holds for all $x\in U_\sigma^\beth\subseteq U_\sigma^{an}$.
\end{proof}

Let $Y$ be a closed subset of $X$; for simplicity we assume $Y\cap T\neq\emptyset$ throughout. Its associated \emph{tropical variety} $\Trop(Y,\Delta)$ is defined to be the image under $\trop_\Delta$ of the closed subspace $Y^{an}$ in $N_\mathbb{R}(\Delta)$; one may alternatively characterize $\Trop(Y,\Delta)$ as the closure of $\trop_\Delta\big((Y\cap T)^{an}\big)$ in $N_\mathbb{R}(\Delta)$. 

\begin{corollary}
Given a closed subset $Y$ of $X$, we have the identity $\Trop_X(Y)=\Trop(Y,\Delta)\cap\overline{\Delta}$. 
\end{corollary}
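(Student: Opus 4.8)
The plan is to deduce the corollary directly from Proposition \ref{prop_troplog=troptoric}. By that proposition we have $\Sigmabar_X=\overline{\Delta}$, and the tropicalization map $\trop_X$ is simply the restriction of the Kajiwara--Payne map $\trop_\Delta$ to the analytic domain $X^\beth\subseteq X^{an}$. Thus the asserted identity $\Trop_X(Y)=\Trop(Y,\Delta)\cap\overline{\Delta}$ unwinds to
\begin{equation*}
\trop_\Delta(Y^\beth)=\trop_\Delta(Y^{an})\cap\overline{\Delta} \ .
\end{equation*}
The inclusion $\subseteq$ is immediate: since $Y^\beth\subseteq Y^{an}$ we have $\trop_\Delta(Y^\beth)\subseteq\trop_\Delta(Y^{an})$, while $\trop_\Delta(X^\beth)\subseteq\overline{\Delta}$ supplies the second containment.

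The key step, on which the reverse inclusion rests, is the identification
\begin{equation*}
X^\beth=\trop_\Delta^{-1}(\overline{\Delta})
\end{equation*}
of subsets of $X^{an}$. First I would check this on a torus-invariant affine patch $U_\sigma=\Spec k[S_\sigma]$, where $\trop_\Delta(x)\in\overline{\sigma}=\Hom(S_\sigma,\Rbar_{\geq 0})$ holds if and only if $-\log\vert\chi^s\vert_x\geq 0$, that is $\vert\chi^s\vert_x\leq 1$, for every $s\in S_\sigma$. The point $x$ lies in $U_\sigma^\beth$ precisely when $\vert f\vert_x\leq 1$ for every $f\in k[S_\sigma]$, a priori the stronger condition. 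Here the triviality of the absolute value on $k$ enters decisively: writing $f=\sum_s a_s\chi^s$ with $a_s\in k$, the ultrametric inequality together with $\vert a_s\vert\in\{0,1\}$ yields $\vert f\vert_x\leq\max_s\vert\chi^s\vert_x$, so boundedness on all of $k[S_\sigma]$ reduces to boundedness on the monomials $\chi^s$. This is exactly where I expect the main work to lie, and it is the only place where the constant-coefficient hypothesis is genuinely used.

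Granting this identification, the reverse inclusion follows with no further effort. Recall that for the closed subscheme $Y\subseteq X$ the $\beth$-space is the intersection $Y^\beth=Y^{an}\cap X^\beth$ inside $X^{an}$. Now take any $w\in\trop_\Delta(Y^{an})\cap\overline{\Delta}$ and choose $y\in Y^{an}$ with $\trop_\Delta(y)=w$. Since $w\in\overline{\Delta}$, the identification above places $y$ in $X^\beth$, hence in $Y^{an}\cap X^\beth=Y^\beth$; therefore $w=\trop_\Delta(y)=\trop_X(y)\in\trop_X(Y^\beth)=\Trop_X(Y)$. Equivalently, the whole argument packages into the elementary set-theoretic identity $g\big(S\cap g^{-1}(T)\big)=g(S)\cap T$ applied to $g=\trop_\Delta$, $S=Y^{an}$, and $T=\overline{\Delta}$, using $Y^\beth=Y^{an}\cap\trop_\Delta^{-1}(\overline{\Delta})$. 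This completes the proof.
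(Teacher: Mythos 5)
Your argument is correct and is in substance the paper's: the paper simply declares the corollary an immediate consequence of Proposition \ref{prop_troplog=troptoric}, and what you have done is supply the one fact that makes it immediate, namely the identification $X^\beth=\trop_\Delta^{-1}(\overline{\Delta})$ inside $X^{an}$, proved via the ultrametric inequality and the triviality of $\vert.\vert_k$. One small caution on your chart-by-chart verification: for $x\in U_\sigma^{an}$ the hypothesis $\trop_\Delta(x)\in\overline{\Delta}$ is a priori weaker than $\trop_\Delta(x)\in\overline{\sigma}$, because $\overline{\Delta}\cap N_\R(\sigma)$ in general contains (parts of the closures of) cones $\tau\in\Delta$ other than the faces of $\sigma$ --- already for $\Delta=\{0,\R_{\geq 0}e_1,\R_{\geq 0}e_2\}$ a point of $(\A^1\times\G_m)^{an}$ can tropicalize into $\R_{\geq 0}e_2$ without lying in $(\A^1\times\G_m)^\beth$. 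The fix is routine but should be said: if $\trop_\Delta(x)\in\overline{\tau}$, then the monomials $\chi^s$ with $\vert\chi^s\vert_x=0$ are exactly those with $\trop_\Delta(x)(s)=\infty$, so by the orbit--cone correspondence $\rho(x)$ lies in an orbit attached to a face of $\tau$, hence $x\in U_\tau^{an}$, and your monomial estimate applied on $U_\tau$ places $x$ in $U_\tau^\beth\subseteq X^\beth$; with that adjustment the identity $g\big(S\cap g^{-1}(T)\big)=g(S)\cap T$ you invoke completes the proof.
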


\begin{proof}
This is an immediate consequence of Proposition \ref{prop_troplog=troptoric}.
\end{proof}

\begin{corollary} \label{cor_proper}
The closed subset $Y$ is proper over $k$ if and only if $\Trop_X(Y)=\Trop(Y,\Delta)$.
\end{corollary}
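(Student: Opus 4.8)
The plan is to reduce everything, by means of the preceding corollary, to a single statement comparing the two analytification functors, and then to identify $X^\beth$ with the tropical preimage of $\overline{\Delta}$. Since that corollary gives $\Trop_X(Y)=\Trop(Y,\Delta)\cap\overline{\Delta}$, the asserted equality $\Trop_X(Y)=\Trop(Y,\Delta)$ is equivalent to the inclusion $\Trop(Y,\Delta)\subseteq\overline{\Delta}$. Thus it suffices to show that $Y$ is proper over $k$ if and only if $\Trop(Y,\Delta)\subseteq\overline{\Delta}$.

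The key geometric input I would isolate as a lemma is the identity
\[
X^\beth=\trop_\Delta^{-1}\big(\overline{\Delta}\big)\subseteq X^{an}\ .
\]
To verify it I would work on the torus-invariant affine charts $U_\sigma=\Spec k[S_\sigma]$ and exploit that the absolute value on $k$ is trivial: for $x\in U_\sigma^{an}$ the bound $\vert f\vert_x\le 1$ for all $f\in k[S_\sigma]$ holds if and only if $\vert\chi^s\vert_x\le 1$ for all $s\in S_\sigma$, because $\big\vert\sum a_s\chi^s\big\vert_x\le\max_s\vert\chi^s\vert_x$ with $\vert a_s\vert\in\{0,1\}$. Rewriting this as $\trop_\Delta(x)(s)=-\log\vert\chi^s\vert_x\ge 0$ shows precisely that $U_\sigma^\beth=\{x\in U_\sigma^{an}\mid\trop_\Delta(x)\in\overline{\sigma}\}$, where $\overline{\sigma}=\Hom(S_\sigma,\Rbar_{\ge 0})$. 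Since $\overline{\Delta}=\bigcup_{\sigma\in\Delta}\overline{\sigma}$ with $\overline{\sigma}\subseteq N_\R(\sigma)$ and $\trop_\Delta^{-1}\big(N_\R(\sigma)\big)=U_\sigma^{an}$, these affine identities glue to the global statement displayed above.

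Granting the lemma, I would conclude as follows. If $Y$ is proper over $k$, then by the valuative criterion of properness every $K$-rational point of $Y$ is $R$-integral, so $Y^\beth=Y^{an}$ as recalled in Section \ref{section_analytification}; using the commutative square of Proposition \ref{prop_troplog=troptoric} one then gets $\Trop_X(Y)=\trop_X(Y^\beth)=\trop_\Delta(Y^{an})=\Trop(Y,\Delta)$, and in particular $\Trop(Y,\Delta)\subseteq\overline{\Delta}$. Conversely, if $\Trop(Y,\Delta)\subseteq\overline{\Delta}$, then $Y^{an}\subseteq\trop_\Delta^{-1}(\overline{\Delta})=X^\beth$ by the lemma, whence $Y^\beth=Y^{an}\cap X^\beth=Y^{an}$; since $Y$ is a closed subscheme of the separated scheme $X$ and of finite type over $k$, the equality $Y^\beth=Y^{an}$ forces every $\Spec K\to Y$ to extend to $\Spec R\to Y$, which is exactly the valuative criterion, so $Y$ is proper.

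The main obstacle I expect is the gluing step in the lemma. The affine computation is immediate, but passing from the local identities $U_\sigma^\beth=\trop_\Delta^{-1}(\overline{\sigma})\cap U_\sigma^{an}$ to the global identity $X^\beth=\trop_\Delta^{-1}(\overline{\Delta})$ requires knowing that $\trop_\Delta$ matches the affine charts of $X^{an}$ with those of $N_\R(\Delta)$, i.e. that $\trop_\Delta^{-1}(N_\R(\sigma))=U_\sigma^{an}$ and that $\overline{\Delta}$ is the union of the $\overline{\sigma}\subseteq N_\R(\sigma)$; this compatibility between the orbit--cone stratification of $X$ and the stratification of $N_\R(\Delta)$ is the content to be extracted from the Kajiwara--Payne construction. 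A secondary point demanding care is the converse of the valuative criterion, where the separatedness of $X$ (hence of $Y$) must be invoked to supply the uniqueness of extensions needed for properness.
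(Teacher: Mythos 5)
Your proof is correct, but for the converse implication it takes a genuinely different route from the paper. The forward direction is the same in both: properness gives $Y^\beth=Y^{an}$, hence $\Trop(Y,\Delta)=\trop_\Delta(Y^{an})=\trop_X(Y^\beth)=\Trop_X(Y)\subseteq\overline{\Delta}$. For the converse, the paper simply quotes the tropical criterion for properness --- $Y$ is proper over $k$ if and only if $\Trop(Y,\Delta)\subseteq\overline{\Delta}$ --- from Tevelev (over $\C$) and Gubler (over arbitrary $k$), whereas you in effect reprove that criterion from scratch via the identity $X^\beth=\trop_\Delta^{-1}\big(\overline{\Delta}\big)$ together with the valuative criterion. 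Your lemma is correct, and its affine verification is exactly the triviality-of-the-valuation computation you give; one small point worth making explicit in the gluing step is that you must argue with the union $\bigcup_{\sigma\in\Delta}\big(\trop_\Delta^{-1}(\overline{\sigma})\cap U_\sigma^{an}\big)$ rather than with $\overline{\Delta}\cap N_\R(\sigma)$, since the latter can be strictly larger than $\overline{\sigma}$ (already for $\PP^1$, where $\overline{\Delta}\cap N_\R(\sigma)=(-\infty,+\infty]$ while $\overline{\sigma}=[0,+\infty]$) --- your formulation does exactly this, so no harm done. What your route buys is a self-contained argument that makes the mechanism transparent: $\Trop(Y,\Delta)\subseteq\overline{\Delta}$ is literally the statement that $Y^{an}$ lands in $X^\beth$. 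What it costs is extra care at the final step: points of $Y^{an}$ only realize rank-one (real-valued) valuations, so the extension property you extract from $Y^\beth=Y^{an}$ a priori concerns rank-one valuation rings only, and you must invoke the standard fact that for a finite-type morphism of Noetherian schemes the valuative criterion may be tested on discrete valuation rings (together with separatedness of $Y\subseteq X$ for uniqueness, which you do note). With that reduction made explicit, your proof is complete and independent of the cited properness criteria.
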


\begin{proof}
If $Y$ is proper over $k$, we have $Y^\beth=Y^{an}$ and therefore $\Trop(Y,\Delta)\subseteq \overline{\Delta}$. Then Proposition \ref{prop_troplog=troptoric} implies the claim. By \cite[Proposition 2.3]{Tevelev_tropcomp} (over $\mathbb{C}$) and \cite[Proposition 11.12]{Gubler_guide} (over all base fields $k$) $Y$ is proper over $k$, if and only if $\Trop(Y,\Delta)\subseteq \overline{\Delta}$. So the converse is true as well.  
\end{proof}

One may endow $Y$ with the logarithmic structure that is given as the pullback of $i^\ast M_X$ via the inclusion $i\mathrel{\mathop:}Y\rightarrow X$. By Theorem \ref{thm_tropfunc} there is a piecewise $\Z$-linear morphism $\Sigmabar(i)\mathrel{\mathop:}\Sigmabar_Y\rightarrow\Sigmabar_X$ such that $\Sigmabar(i)\big(\Trop_Y(Y)\big)=\Trop_X(Y)$. As can be seen in the following Example \ref{example_genericconic} this morphism is in general not injective. 

\begin{example}\label{example_genericconic}
Let $Y$ be a generic conic that intersects the toric boundary of $X=\PP^2$ as indicated below.
\begin{center}\begin{tikzpicture}
\draw (0,0.4) -- (3.4,0.4);
\draw (0.2,0) -- (1.9,2.9);
\draw (3.2,0) -- (1.5,2.9);
\draw (1.7,1) ellipse (1.2cm and 0.9cm);
\node at (3,2) {$Y$};
\node at (4.3,0) {$\PP^2$};
\end{tikzpicture}\end{center}

Consider $Y$ as a logarithmic scheme with respect to the pullback logarithmic structure $i^\ast M_X$. Then the Kato fan $F_Y$ is given by  six copies of $\Spec\N$ glued over the generic points and the extended cone complex $\Sigmabar_Y$ consists of six copies of $\Rbar_{\geq 0}$ glued at the origin. The tropical variety $\Trop_X(Y)$ relative to $X$ is given by collapsing each of the pairs of cones to one (with tropical multiplicity $2$) and is equal to the $1$-skeleton of $\Sigmabar_{\PP^2}$.

\begin{center}\begin{tikzpicture}
\draw (0,0) -- (1,0.25);
\draw [dashed] (1,0.25) -- (2,0.5);
\draw (0,0) -- (1,-0.25);
\draw [dashed] (1,-0.25) -- (2,-0.5);
\draw (0,0) -- (0.25,1);
\draw [dashed] (0.25,1) -- (0.5,2);
\draw (0,0) -- (-0.25,1);
\draw [dashed] (-0.25,1) -- (-0.5,2);
\draw (0,0) -- (-0.8,-0.45);
\draw [dashed] (-0.8,-0.45) -- (-1.6,-0.9);
\draw (0,0) -- (-0.45,-0.8);
\draw [dashed] (-0.45,-0.8) -- (-0.9,-1.6);

\fill (2,0.5) circle (0.05cm);
\fill (2,-0.5) circle (0.05cm);
\fill (0.5,2) circle (0.05cm);
\fill (-0.5,2) circle (0.05cm);
\fill (-1.6,-0.9) circle (0.05cm);
\fill (-0.9,-1.6) circle (0.05cm);
\fill (0,0) circle (0.12cm);

\node at (2,2) {$\Sigmabar_Y$};
\end{tikzpicture}\end{center}
\end{example}

Let us now generalize the observation made in the above example. Suppose that $Y$ is a sch\"on subvariety of $X$, i.e. assume that the multiplication map 
\begin{equation*}
\mu\mathrel{\mathop:}T\times Y\longrightarrow X
\end{equation*}
is smooth. By \cite[Proposition 2.7]{Ulirsch_tropcomplogreg} this is equivalent to $Y$, with the logarithmic structure induced from $X$, being logarithmically smooth over $k$. 

\begin{proposition}\label{prop_faithfultrop}
The induced map $\Sigmabar(i)\mathrel{\mathop:} \Sigmabar_Y\rightarrow \Sigmabar_X$ is an isomorphism of $\Sigmabar_Y$ if and only if the intersection of $Y$ with every $T$-orbit in $X$ non-empty and irreducible (i.e. has multiplicity one).
\end{proposition}

\begin{proof}
The map $\Sigmabar(i)\mathrel{\mathop:} \Sigmabar_Y\rightarrow \Sigmabar_X$ is an isomorphism if and only if the embedding induces a one-to-one correspondence between the logarithmic strata of $Y$ and the $T$-orbits of $X$. This is the case precisely when the intersection of $Y$ with every $T$-orbit in $X$ is non-empty and irreducible. 
\end{proof}

\begin{proof}[Proof of Corollary \ref{cor_schoen&faithful}]
Since $Y$ is proper, we have $Y^\beth=Y^{an}$ and, as explained in Corollary \ref{cor_proper} above, this also means that $\Trop_\Delta(Y)=\Sigmabar_X$. By Proposition \ref{prop_faithfultrop} we have $\Sigmabar_Y\simeq\Sigmabar_X$ if and only if the intersection of $Y$ with every $T$-orbit in $X$ is non-empty and irreducible, and therefore Theorem \ref{thm_skel=trop} yields the claim.
\end{proof}




\bibliographystyle{amsalpha}
\bibliography{biblio}{}

\end{document}